\documentclass[a4paper,reqno,10pt]{amsart}

\usepackage[dvipsnames]{xcolor}
\usepackage[colorlinks=true, pdfstartview=FitV, linkcolor=purple, citecolor=purple, urlcolor=purple]{hyperref}
\usepackage{amsmath}
\usepackage{amssymb}
\usepackage{amsthm}
\usepackage{bbm}
\usepackage{txfonts}
\usepackage{microtype}
\usepackage[utf8]{inputenc}
\usepackage{caption}
\usepackage{subcaption}
\usepackage{float}
\usepackage{tikz}
\usepackage{cleveref}
\usepackage[foot]{amsaddr}
\usepackage{enumitem}
\usepackage{fullpage}
\usepackage{mathrsfs}
\usepackage[percent]{overpic}
    \usetikzlibrary{positioning}
\usepackage[font=footnotesize]{caption}
\usepackage{enumitem}
\usepackage[all]{xy}
\usepackage{mathtools}

\newtheorem{theorem}{Theorem}[section]
\newtheorem{lemma}[theorem]{Lemma}
\newtheorem{proposition}[theorem]{Proposition}
\newtheorem{corollary}[theorem]{Corollary}
\newtheorem{definition}[theorem]{Definition}

\setlength{\parskip}{0.4\baselineskip plus 0.2\baselineskip minus 0.2\baselineskip}
\setlength\parindent{0pt}

\title{Intermediate $\beta$-shifts as greedy $\beta$-shifts with a hole}

\author{Niels Langeveld}
\address[Niels Langeveld]{Lehrstuhl f\"{u}r Mathematik und Statistik, Montanuniversit\"{a}t Leoben, Leoben, Austria}

\author{Tony Samuel}
\address[Tony Samuel]{School of Mathematics, University of Birmingham, Birmingham, UK}

\begin{document}

\null

\vspace{-1.25em}

\maketitle

\vspace{-2.75em}
 
\begin{abstract}
We show that every intermediate \mbox{$\beta$-transformation} is topologically conjugate to a greedy \mbox{$\beta$-transformation} with a hole at zero,  and provide a counterexample illustrating that the correspondence is not \mbox{one-to-one}.  This characterisation is employed to (1) build a Krieger embedding theorem for intermediate \mbox{$\beta$-transformation}, complementing the result of Li, Sahlsten, Samuel and Steiner [2019], and (2) obtain new metric and topological results on survivor sets of intermediate \mbox{$\beta$-transformations} with a hole at zero, extending the work of Kalle, Kong, Langeveld and Li [2020]. Further, we derive a method to calculate the Hausdorff dimension of such survivor sets as well as results on certain bifurcation sets.  Moreover, by taking unions of survivor sets of intermediate \mbox{$\beta$-transformations} one obtains an important class of sets arising in metric number theory, namely sets of badly approximable numbers in non-integer bases.  We prove, under the assumption that the underlying symbolic space is of finite type, that these sets of badly approximable numbers are winning in the sense of Schmidt games, and hence have the countable intersection property, extending the results of Hu and Yu [2014], Tseng [2009] and F\"{a}rm, Persson and Schmeling [2010].
\end{abstract}

\section{Introduction}\label{sec:intro}

The simplest and most widely studied class of expanding interval maps, and those which we will concern ourselves, are intermediate $\beta$-transformations, namely transformations of the form $T_{\beta, \alpha} : x \mapsto \beta x + \alpha\mod 1$ acting on $[0,1]$, where $(\beta, \alpha) \in \Delta \coloneqq \{ (b, a) \in \mathbb{R}^{2} \colon b \in (1, 2) \; \text{and} \; a \in [0, 2-b]\}$. This class of transformations have motivated a wealth of results, providing practical solutions to a variety of problems.  They arise as Poincar\'e maps of the geometric model of Lorenz differential equations~\cite{MR681294}, Daubechies \textsl{et al.} \cite{1011470} proposed a new approach to analog-to-digital conversion using \mbox{$\beta$-transformations}, and Jitsumatsu and Matsumura \cite{Jitsumatsu2016AT} developed a random number generator using \mbox{$\beta$-transformations}.  (This random number generator passed the NIST statistical test suite.)  Through their study, many new phenomena have appeared, revealing rich combinatorial and topological structures, and unexpected connections to probability theory, ergodic theory and aperiodic order; see for instance \cite{bezuglyi_kolyada_2003,Komornik:2011,ArneThesis}.

Intermediate $\beta$-transformations also have an intimate link to metric number theory in that they give rise to \mbox{non-integer} based expansions of real numbers. Given $\beta \in (1, 2)$ and $x \in [0, 1/(\beta-1)]$, an infinite word $\omega = \omega_{1}\omega_{2}\cdots$ with letters in the alphabet $\{0, 1\}$ is called a \textsl{$\beta$-expansion} of $x$ if
    \begin{align*}
        x = \sum_{n \in \mathbb{N}} \omega_{n} \, \beta^{-n}.
    \end{align*}
Through iterating the map $T_{\beta, \alpha}$ one obtains a subset of $\{ 0, 1\}^{\mathbb{N}}$ known as the intermediate $\beta$-shift $\Omega_{\beta, \alpha}$, where each $\omega \in \Omega_{\beta, \alpha}$ is a $\beta$-expansion, and corresponds to a unique point in $[\alpha/(\beta-1), 1+\alpha/(\beta-1)]$, see \eqref{eq:commutative_diag} and the commentary following it for further details. By equipping $\Omega_{\beta, \alpha}$ with the left shift map $\sigma$, one obtains a dynamical system which is topologically conjugate to the dynamical system $\mathcal{S}_{\beta,\alpha}=(T_{\beta, \alpha}, [0, 1])$, namely one obtains a symbolic system which possess the same ergodic properties as $\mathcal{S}_{\beta,\alpha}$. Note, in this article, by topologically conjugate we mean that the conjugacy is one-to-one everywhere except on a countable set on which the conjugacy is at most finite to one.

Open dynamical systems, namely systems with holes in the state space through which mass can leak away, have received a lot of attention, see \cite{Ur86,Schme,N09,BBF2014,KKLL} and reference therein.  We prove the following correspondence connecting $\mathcal{S}_{\beta,\alpha}$ and open dynamical systems driven by greedy $\beta$-transformations, namely intermediate $\beta$-transformations with no rotation factor, or equivalently when $\alpha = 0$.

    \begin{theorem}\label{thm:main}
        Given $(\beta, \alpha) \in \Delta$, there exist $t \in [0, 1]$ and $\beta' \in (1, 2)$ with $(T_{\beta, \alpha}, [0, 1])$ topologically conjugate to the open dynamical system $(T_{\beta', 0}\vert_{K^{+}_{\beta',0}(t)}, K^{+}_{\beta',0}(t))$, where 
            \begin{align*}
                K^{+}_{\beta', 0}(t) \coloneqq \{ x \in[0, 1) \colon T_{\beta',0}^{n}(x)\not \in [0,t) \; \textup{for all} \; n \in \mathbb{N}_{0} \}.
            \end{align*}
        However, the converse does not hold, namely there exist $t \in [0, 1]$ and $\beta' \in (1, 2)$ such that there does not exist a topological conjugation between $(T_{\beta', 0}\vert_{K^{+}_{\beta',0}(t)}, K^{+}_{\beta', 0}(t))$ and $(T_{\beta, \alpha}, [0, 1])$ for any $(\beta, \alpha) \in \Delta$. Moreover, given $\beta' \in (1, 2)$ with $T_{\beta',0}^{n}(1) = 0$, for some $n \in \mathbb{N}$, there exists $\delta \in (0, \beta'^{-1})$, such that to each $t < \delta$ in the bifurcation set 
                \begin{align*}
                    E_{\beta',0}^{+} \coloneqq \{ t \in[0,1) \colon T_{\beta', 0}^{n}(t) \not\in [0, t) \; \textup{for all} \; n \in \mathbb{N}_{0} \}
                \end{align*}
        one may associate a unique $(\beta, \alpha) \in \Delta$ with $(T_{\beta, \alpha}, [0, 1])$ topologically conjugate to $(T_{\beta', 0}\vert_{K^{+}_{\beta',0}(t)}, K^{+}_{\beta',0}(t))$.
    \end{theorem}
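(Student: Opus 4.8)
The plan is to transfer everything to the symbolic side. Write $\preceq$ for the lexicographic order on $\{0,1\}^{\mathbb{N}}$; let $i_{\beta,\alpha}\colon[0,1]\to\{0,1\}^{\mathbb{N}}$ be the itinerary map of $T_{\beta,\alpha}$ with respect to its discontinuity $p_{\beta,\alpha}=(1-\alpha)/\beta$; put $\pi_{b}(\omega)=\sum_{n\geq 1}\omega_{n}b^{-n}$; and for $b\in(1,2)$ let $\mathsf{d}(b)$ denote the quasi-greedy expansion of $1$ in base $b$, i.e.\ the largest $\omega$ with $\pi_{b}(\omega)=1$ and $\sigma^{n}\omega\preceq\omega$ for all $n$. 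I would record two facts first. \emph{(a)} Since $i_{\beta,\alpha}$ is non-decreasing, $\Omega_{\beta,\alpha}=\{\omega\colon\underline{\omega}_{\beta,\alpha}\preceq\sigma^{n}\omega\preceq\overline{\omega}_{\beta,\alpha}\ \text{for all}\ n\in\mathbb{N}_{0}\}$, where $\underline{\omega}_{\beta,\alpha}=i_{\beta,\alpha}(0)$ begins with $0$, $\overline{\omega}_{\beta,\alpha}=i_{\beta,\alpha}(1)$ begins with $1$, and $\pi_{\beta}(i_{\beta,\alpha}(x))=x+\alpha/(\beta-1)$; conversely $(\beta,\alpha)\mapsto(\underline{\omega}_{\beta,\alpha},\overline{\omega}_{\beta,\alpha})$ is a bijection from $\Delta$ onto the set of \emph{admissible} pairs $(\underline{\omega},\overline{\omega})$, namely those for which $\underline{\omega}$ begins with $0$, $\overline{\omega}$ begins with $1$, $\underline{\omega}\preceq\sigma^{n}\underline{\omega}\preceq\overline{\omega}$ and $\underline{\omega}\preceq\sigma^{n}\overline{\omega}\preceq\overline{\omega}$ for all $n$, and the equation $\pi_{\beta}(\overline{\omega})-\pi_{\beta}(\underline{\omega})=1$ has a root $\beta\in(1,2)$ with $\alpha:=(\beta-1)\pi_{\beta}(\underline{\omega})\in[0,2-\beta]$. \emph{(b)} For a hole $[0,t)$ the coding map conjugates $(T_{\beta',0}\vert_{K^{+}_{\beta',0}(t)},K^{+}_{\beta',0}(t))$, in the sense of the introduction, to the subshift $\{\omega\colon\mathsf{t}\preceq\sigma^{n}\omega\preceq\mathsf{d}(\beta')\ \text{for all}\ n\}$, where $\mathsf{t}:=i_{\beta',0}(t)$ is the greedy $\beta'$-expansion of $t$; this uses $T_{\beta',0}^{n}x\geq t\iff\sigma^{n}i_{\beta',0}(x)\succeq\mathsf{t}$, and it is precisely the hole being $[0,t)$ and not $[0,t]$ that lets $\mathsf{t}$ itself belong to the survivor coding. (Fact (a) is a standard monotonicity computation together with the classification of Li--Sahlsten--Samuel--Steiner; fact (b) is the survivor-set coding underlying Kalle--Kong--Langeveld--Li.) After (a)--(b) the whole theorem becomes a matching problem for lexicographic intervals.

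For the conjugacy, given $(\beta,\alpha)\in\Delta$ I would set $\overline{\omega}:=i_{\beta,\alpha}(1)$; it begins with $1$ and satisfies $\sigma^{n}\overline{\omega}\preceq\overline{\omega}$ for all $n$, so Parry's theorem supplies a unique $\beta'\in(1,2)$ with $\mathsf{d}(\beta')=\overline{\omega}$. Then $t:=\pi_{\beta'}(i_{\beta,\alpha}(0))$ lies in $[0,\beta'^{-1})$ (as $i_{\beta,\alpha}(0)$ begins with $0$), the sequence $i_{\beta,\alpha}(0)$ is the greedy $\beta'$-expansion $\mathsf{t}$ of $t$ (its shifts are all $\preceq\mathsf{d}(\beta')$), and $t\in E^{+}_{\beta',0}$ because $\sigma^{n}\mathsf{t}\succeq\mathsf{t}$ for all $n$. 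By (a), $\Omega_{\beta,\alpha}$ is exactly the subshift of (b), and composing the two codings gives the asserted topological conjugacy. For the failure of the converse it suffices to produce one bad pair: note $E^{+}_{\beta',0}\subseteq[0,\beta'^{-1})$, since $t\geq\beta'^{-1}$ forces $T_{\beta',0}(t)=\beta' t-1<t$ (as $t<1/(\beta'-1)$), and for such $t$ one has $K^{+}_{\beta',0}(t)=\varnothing$; more substantially, for suitable $\beta'$ and $t<\beta'^{-1}$ on the top plateau of the survivor-dimension function $K^{+}_{\beta',0}(t)$ is finite (a single periodic $T_{\beta',0}$-orbit), so the open system has zero entropy and cannot be conjugate to any $(T_{\beta,\alpha},[0,1])$, which is uncountable of entropy $\log\beta>0$; symbolically, $\{\omega\colon\mathsf{t}\preceq\sigma^{n}\omega\preceq\mathsf{d}(\beta')\}$ then violates the admissibility of (a).

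For the final ``moreover'', fix $\beta'$ with $T_{\beta',0}^{n}(1)=0$ for some $n\in\mathbb{N}$. Then the greedy expansion of $1$ in base $\beta'$ is finite, so $\mathsf{d}(\beta')$ is \emph{purely periodic}; hence its $\sigma$-orbit $\mathcal{O}=\{\sigma^{k}\mathsf{d}(\beta')\colon k\in\mathbb{N}_{0}\}$ is finite and $m_{\beta'}:=\min_{w\in\mathcal{O}}\pi_{\beta'}(w)>0$ (each $w\in\mathcal{O}$ is periodic and not $0^{\infty}$). I would take $\delta:=\min\{m_{\beta'},\delta_{0}\}$, shrunk if necessary so that $\delta<\beta'^{-1}$, where $\delta_{0}>0$ is the threshold produced in the last step. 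Fix $t<\delta$ with $t\in E^{+}_{\beta',0}$ and put $\mathsf{t}:=i_{\beta',0}(t)$. Then $(\mathsf{t},\mathsf{d}(\beta'))$ satisfies the lexicographic half of the admissibility of (a): $\mathsf{t}$ begins with $0$ as $t<\beta'^{-1}$; $\mathsf{t}\preceq\sigma^{n}\mathsf{t}$ for all $n$, which is exactly $t\in E^{+}_{\beta',0}$; $\sigma^{n}\mathsf{t}\preceq\mathsf{d}(\beta')$ for all $n$, since $\mathsf{t}\in\Omega_{\beta',0}$; $\sigma^{n}\mathsf{d}(\beta')\preceq\mathsf{d}(\beta')$ for all $n$, by the quasi-greedy property; and $\sigma^{n}\mathsf{d}(\beta')\succeq\mathsf{t}$ for all $n$, because $\pi_{\beta'}(\sigma^{n}\mathsf{d}(\beta'))\geq m_{\beta'}>t$. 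By (a) there is then a unique $(\beta,\alpha)\in\Delta$ whose kneading pair is $(\mathsf{t},\mathsf{d}(\beta'))$, i.e.\ with $\Omega_{\beta,\alpha}$ equal to the subshift of (b); hence $(T_{\beta,\alpha},[0,1])$ is topologically conjugate to $(T_{\beta',0}\vert_{K^{+}_{\beta',0}(t)},K^{+}_{\beta',0}(t))$. Uniqueness of $(\beta,\alpha)$ follows because $\mathsf{t}$ and $\mathsf{d}(\beta')$ are the two $\sigma$-extremal sequences of that subshift and the kneading pair determines $(\beta,\alpha)$ through (a) --- the codings being order preserving, so the reflected parameter $(\beta,2-\beta-\alpha)$ is not a competitor.

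The step I expect to be the real obstacle is producing $\delta_{0}$, i.e.\ checking that for \emph{every} sufficiently small $t\in E^{+}_{\beta',0}$ the pair $(\mathsf{t},\mathsf{d}(\beta'))$ also satisfies the remaining, non-lexicographic part of admissibility: that $\pi_{\beta}(\mathsf{d}(\beta'))-\pi_{\beta}(\mathsf{t})=1$ has a root $\beta\in(1,\beta')$ and that $\alpha=(\beta-1)\pi_{\beta}(\mathsf{t})\leq 2-\beta$. I would argue perturbatively: the left-hand side equals $1-t<1$ at $\beta=\beta'$, and for small $t$ its derivative there is close to $-\sum_{n}n\,\mathsf{d}(\beta')_{n}(\beta')^{-n-1}<0$ (the leading block of $\mathsf{t}$ being zero), so it attains the value $1$ at some $\beta$ slightly below $\beta'$, with the corresponding $\alpha$ of order $(\beta'-1)t$ and hence in $[0,2-\beta]$; this is also where $\delta<\beta'^{-1}$ gets forced when $\beta'$ is large. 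It is precisely in the positivity of $m_{\beta'}$ and in this last estimate that the hypothesis $T_{\beta',0}^{n}(1)=0$ --- equivalently, the periodicity of $\mathsf{d}(\beta')$ --- and the smallness $t<\delta$ are used: without periodicity $\inf_{n}\pi_{\beta'}(\sigma^{n}\mathsf{d}(\beta'))$ can vanish and no positive $\delta$ exists.
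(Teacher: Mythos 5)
Your overall route --- transferring everything to the symbolic side, using Parry's theorem to produce $\beta'$ from $\tau^{-}_{\beta,\alpha}(1)$, and coding the survivor set as the lexicographic interval $\{\omega\colon \mathsf{t}\preceq\sigma^{n}\omega\prec\mathsf{d}(\beta')\}$ --- is the same as the paper's, and your forward direction is sound. The genuine gap is in your fact (a), and it propagates into both the ``converse fails'' part and the ``moreover'' part. The correct characterisation of which pairs $(\underline{\omega},\overline{\omega})$ arise as (data equivalent to) kneading invariants of an intermediate $\beta$-shift is the Barnsley--Steiner--Vince theorem (Theorem 2.4 of the paper), and it is \emph{not} ``lexicographic conditions plus solvability of $\pi_{\beta}(\overline{\omega})-\pi_{\beta}(\underline{\omega})=1$ with $\alpha\in[0,2-\beta]$.'' It requires in addition positive entropy (their Condition (3)) and, crucially, Condition (4): if both sequences lie in $\{\chi,\zeta\}^{\mathbb{N}}$ for blocks $\chi,\zeta$ of length $\geq 3$ with $\chi_1\chi_2=01$, $\zeta_1\zeta_2=10$ and $(\chi^{\infty},\zeta^{\infty})$ admissible, then they must equal $\chi^{\infty}$ and $\zeta^{\infty}$. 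The paper's counterexample --- $\overline{\omega}=11(100)^{\infty}$ for $\beta$ with minimal polynomial $x^5-x^4-x^3-2x^2+x+1$, paired with $\underline{\omega}=00(011)^{\infty}$ --- satisfies every condition you list, including positive entropy of the resulting subshift, yet is not a kneading pair precisely because $0\overline{\omega}=\chi\zeta^{\infty}\neq\chi^{\infty}$ with $\chi=011$, $\zeta=100$. So your claimed bijection in (a) is false as stated, and your counterexample to the converse (empty or zero-entropy survivor set) only establishes the degenerate failure, missing the substantive one the theorem is really about.

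For the ``moreover'' part this matters decisively: the step ``by (a) there is then a unique $(\beta,\alpha)$'' is exactly where Condition (4) must be verified for the pair $(\mathsf{t},\mathsf{d}(\beta'))$, and this verification is the heart of the paper's argument and the place where periodicity of $\mathsf{d}(\beta')$ is actually used (if $0\mathsf{d}(\beta')\in\{\chi,\zeta\}^{\mathbb{N}}$ is periodic and $\chi^{\infty}\in\Omega^{-}(\chi^{\infty},\zeta^{\infty})$, then $0\mathsf{d}(\beta')=\chi^{\infty}$, and one derives a lexicographic contradiction from $1\mathsf{t}\neq\zeta^{\infty}$). Your proposal never addresses this. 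Relatedly, the obstacle you flag --- producing $\delta_0$ by a perturbative root-finding argument for the Laurent equation --- is a red herring: once the four BSV14 conditions hold, existence of $(\beta,\alpha)$ is automatic and no root condition needs to be checked by hand. The second constraint defining $\delta$ should instead be the positive-entropy threshold $t<t_{\beta',0,c}$ (Condition (3)); your first constraint $t<m_{\beta'}$ does correctly match the paper's $\rho=\inf_{n}\pi_{\beta',0}(\sigma^{n}\tau^{-}_{\beta',0}(1))>0$, which is indeed where periodicity enters a second time.
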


This result complements \cite[Proposition 3.1 and Theorem 3.5]{bundfuss_kruger_troubetzkoy_2011}. Here, it is shown that every subshift of finite type and any greedy $\beta$-shift encodes a survivor set of $x \mapsto mx \bmod 1$, for some $m \in \mathbb{N}$ with $m \geq 2$.  With this and \Cref{thm:main} at hand, we have that any intermediate $\beta$-shift encodes a survivor set of the doubling map.

We employ our characterisation given in \Cref{thm:main} to (1) build a Krieger embedding theorem for intermediate \mbox{$\beta$-transformations}, and (2) obtain new metric and topological results on survivor sets of intermediate \mbox{$\beta$-transformations}.
    \begin{enumerate}[label={\rm(\arabic*)},leftmargin=*]
        \item {\bfseries A Krieger embedding theorem for intermediate \mbox{$\beta$-transformations.}} Subshifts, such as $\Omega_{\beta, \alpha}$, are to dynamical systems what shapes like polygons and curves are to geometry. Subshifts which can be described by a finite set of forbidden words are called \textsl{subshifts of finite type} and play an essential role in the study of dynamical systems. One reason why subshifts of finite type are so useful is that they have a simple representation using a finite directed graph. Questions concerning the subshift can then often be phrased as questions about the graph’s adjacency matrix, making them more tangible, see for instance \cite{LM,brin_stuck_2002} for further details on subshifts of finite type. Moreover, in the case of greedy $\beta$-shifts (that is when $\alpha = 0$), often one first derives results for greedy $\beta$-shifts of finite type, and then one uses an approximation argument to determine the result for a general greedy $\beta$-shift, see for example \cite{DavidFarm2010,LL16}.  Here we prove a Krieger embedding theorem for intermediate $\beta$-shifts.  Namely, we show the following, complementing the work of \cite{LSSS} where the same result is proven except where the containment property given in Part~(iii) is reversed.  Due to this reversed containment, our proof and that of \cite{LSSS}, although both of a combinatorial flavour, are substantially different.
            \begin{corollary}\label{Cor_1}
                Given $(\beta, \alpha) \in \Delta$, there exists a sequence $\{ (\beta_{n}, \alpha_{n}) \}_{n \in \mathbb{N}}$ in $\Delta$ with $\lim_{n\to \infty} (\beta_{n}, \alpha_{n}) = (\beta, \alpha)$ and
                    \begin{enumerate}[label={\rm(\roman*)}]
                        \item $\Omega_{\beta_{n}, \alpha_{n}}$ a subshift of finite type,
                        \item the Hausdorff distance between $\Omega_{\beta, \alpha}$ and $\Omega_{\beta_{n}, \alpha_{n}}$ converges to zero as $n$ tends to infinity, and 
                        \item $\Omega_{\beta_{n}, \alpha_{n}} \subseteq \Omega_{\beta, \alpha}$.
                    \end{enumerate}
            \end{corollary}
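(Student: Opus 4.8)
The plan is to use \Cref{thm:main} to transport the question into the greedy setting with a hole at zero, where approximation from the inside by subshifts of finite type is combinatorially transparent, and then to transport the approximants back.

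First, fix $\beta'\in(1,2)$ and $t\in[0,1)$ as in the first part of \Cref{thm:main}, so that $\mathcal{S}_{\beta,\alpha}$ is topologically conjugate to $(T_{\beta',0}\vert_{K^{+}_{\beta',0}(t)},K^{+}_{\beta',0}(t))$. Coding the latter system in base $\beta'$, the subshift $\Omega_{\beta,\alpha}$ is conjugate to $\widetilde{\Omega}(\beta',t)\coloneqq\{\omega\in\Omega_{\beta',0}\colon\sigma^{n}(\omega)\succeq\tau\ \text{for all}\ n\in\mathbb{N}_{0}\}$, where $\tau$ is the quasi-greedy $\beta'$-expansion of $t$ and $\succeq$ denotes the lexicographic order. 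Two monotonicities are built into this description: $\Omega_{\beta_{1}',0}\subseteq\Omega_{\beta_{2}',0}$ whenever $\beta_{1}'\le\beta_{2}'$, and replacing $\tau$ by a lexicographically larger sequence (i.e.\ enlarging the hole) shrinks $\widetilde{\Omega}$. Consequently, if $\beta_{n}'\le\beta'$ and $t_{n}\ge t$ then, because the quasi-greedy $\beta_{n}'$-expansion of $t_{n}$ is lexicographically at least $\tau$, one gets $\widetilde{\Omega}(\beta_{n}',t_{n})\subseteq\widetilde{\Omega}(\beta',t)$.

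Next, I would choose $\beta_{n}'\nearrow\beta'$ with each $\beta_{n}'$ a simple Parry number and $t_{n}\searrow t$ with each $t_{n}$ lying in the bifurcation set $E^{+}_{\beta_{n}',0}$, below the threshold $\delta_{n}$ supplied by the last part of \Cref{thm:main} for $\beta_{n}'$, and so that the coded survivor set $\widetilde{\Omega}(\beta_{n}',t_{n})$ is a subshift of finite type. Producing such sequences is the combinatorial heart of the matter: one must approximate a prescribed sequence from below in the lexicographic order by finite (or eventually periodic) admissible data, simultaneously for the expansion of $1$ in base $\beta'$, which controls $\beta'$, and for $\tau$, which controls the hole, all the while keeping $t_{n}$ inside $E^{+}_{\beta_{n}',0}$ and the resulting survivor set genuinely of finite type rather than merely sofic. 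This is carried out by truncating the relevant expansions at carefully chosen positions and, where necessary, decreasing the last nonzero digit, in the spirit of Parry's classical density argument for $\beta$-shifts, the extra work being the bookkeeping that keeps $t_{n}$ in the bifurcation set. Continuity properties of the map assigning to a base the quasi-greedy expansion of $1$, together with the nesting of the sets $E^{+}_{\beta,0}$, ensure that the conditions $\beta_{n}'\to\beta'$ and $t_{n}\to t$ translate into convergence of the coded survivor sets.

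Now apply the converse clause of \Cref{thm:main} to each pair $(\beta_{n}',t_{n})$: this returns a unique $(\beta_{n},\alpha_{n})\in\Delta$ with $\mathcal{S}_{\beta_{n},\alpha_{n}}$ topologically conjugate to the open system on $K^{+}_{\beta_{n}',0}(t_{n})$, hence $\Omega_{\beta_{n},\alpha_{n}}$ conjugate to $\widetilde{\Omega}(\beta_{n}',t_{n})$ and, through this conjugacy, contained in the copy of $\Omega_{\beta,\alpha}$; this is (iii). For (i) one must be a little careful, since the conjugacy of \Cref{thm:main} is only essentially injective and such maps can send a strictly sofic shift to a subshift of finite type; instead one reads off the kneading data of $\Omega_{\beta_{n},\alpha_{n}}$ directly from the correspondence and checks that the choices of $\beta_{n}'$ and $t_{n}$ make that data finite, so that $\Omega_{\beta_{n},\alpha_{n}}$ really is a subshift of finite type. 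Continuity of the correspondence gives $(\beta_{n},\alpha_{n})\to(\beta,\alpha)$. Finally, since $\beta_{n}'\nearrow\beta'$ and $t_{n}\searrow t$, the sets $\widetilde{\Omega}(\beta_{n}',t_{n})$ increase to a union that is dense in $\widetilde{\Omega}(\beta',t)$ -- here the requirement $t_{n}\in E^{+}_{\beta_{n}',0}$ is precisely what rules out the discontinuous ``collapse'' of survivor sets that can occur for arbitrary holes -- and therefore the Hausdorff distance between $\Omega_{\beta,\alpha}$ and $\Omega_{\beta_{n},\alpha_{n}}$ tends to zero, giving (ii).

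The main obstacle is the simultaneous combinatorial approximation in the second step -- finite admissible data converging from below while remaining in the bifurcation set and producing an honest subshift of finite type -- together with the check in the third step that the essentially injective conjugacy of \Cref{thm:main} transfers the finite-type property faithfully in this direction.
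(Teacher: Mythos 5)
Your overall strategy---push everything through \Cref{thm:main} into the greedy-with-hole picture, approximate $(\beta',t)$ there by ``finite'' data, and pull back---is the route the paper takes only for a special case, and the step you yourself flag as ``the combinatorial heart of the matter'' is precisely where the proof is missing. The paper first proves \Cref{Cor_1} under the hypothesis that $\tau^{-}_{\beta,\alpha}(p_{\beta,\alpha})$ is periodic and $\tau^{+}_{\beta,\alpha}(p_{\beta,\alpha})$ is not (\Cref{thm:one_perioidc}); there $\tau^{-}_{\beta',0}(1)$ is already periodic, so $\beta'$ is held \emph{fixed} and only $t$ is approximated from above, by showing via a Lyndon-word decomposition that $E^{+}_{\beta',0}\cap(0,y)$ is the complement of a union of half-open intervals $I_{s}$ (hence $t$ is not isolated from above) and then invoking the periodic-point density lemmas of \cite{KKLL}. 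Crucially, the return trip uses the clause of \Cref{thm:main} that when the quasi-greedy expansion of $1$ is periodic the correspondence is onto ($\mathcal{A}_{\beta}=\mathcal{B}_{\beta}$). Your plan requires in addition moving $\beta'$ downward to Parry numbers $\beta_{n}'$, and this is where concrete obstructions appear that you do not address: (a) lowering $\beta'$ changes $\rho=\inf_{n}\pi_{\beta',0}(\sigma^{n}(\tau^{-}_{\beta',0}(1)))$ and hence the threshold $\delta_{n}$ of \Cref{thm:main} in an uncontrolled way, and you need $t\le t_{n}<\delta_{n}$ with $t$ fixed; (b) the paper's own counterexample ($\tau^{-}_{\beta,0}(1)=11(100)^{\infty}$, $\xi=00(011)^{\infty}$) shows that a pair of periodic data in the bifurcation set need \emph{not} correspond to any intermediate $\beta$-shift, because Condition~(4) of \Cref{thm:BSV14} can fail---so ``unique association'' is not automatic for your $(\beta_{n}',t_{n})$; (c) comparing $t_{n}\ge t$ as real numbers does not directly order the quasi-greedy expansions taken in the two \emph{different} bases $\beta_{n}'$ and $\beta'$, so the containment $\widetilde{\Omega}(\beta_{n}',t_{n})\subseteq\widetilde{\Omega}(\beta',t)$ needs a symbolic, not metric, formulation.

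For the remaining case, where both kneading invariants are non-periodic, the paper abandons the greedy-with-hole picture altogether and argues directly in $\Delta$: it perturbs $\beta$ down to $\beta'$ with $(\beta+1)^{n}(\beta-\beta')$ controlled by the distance of the first $n$ points of the orbit of $p$ to $p$, proves $\tau^{\pm}_{\beta',\alpha}(p')\vert_{n}=\tau^{\pm}_{\beta,\alpha}(p)\vert_{n}$ with $\tau^{\pm}_{\beta',\alpha}(p')\prec\tau^{\pm}_{\beta,\alpha}(p)$, uses the reflection $R$ to get the opposite inequalities, and then uses the monotonicity and one-sided continuity of $a\mapsto\tau^{\pm}_{\beta',a}(p_{\beta',a})$ (\Cref{prop:mon_cont_kneading}) to locate $a\in[q_{1},q_{2}]$ with both kneading invariants periodic, handling several sub-cases. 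Your proposal replaces this with an appeal to ``truncating the relevant expansions \ldots{} in the spirit of Parry's classical density argument'', which is an assertion rather than an argument; since the authors evidently found this double approximation delicate enough to avoid, the burden is on you to carry it out, and as written the proof has a genuine gap at exactly that point. (Your worry in the last step about the conjugacy not preserving finite type is, by contrast, a non-issue: the correspondence of \Cref{thm:main} is an equality of symbolic spaces, $\Omega^{+}_{\beta_{n},\alpha_{n}}=\mathcal{K}^{+}_{\beta_{n}',0}(t_{n})$, and finiteness of type is read off from periodicity of both kneading invariants via \Cref{thm:LSSS}.)
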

        \noindent These results together with the results of \cite{LSSS} complements the corresponding result for the case when $\alpha = 0$ proven in \cite{P1960} and which asserts that any greedy $\beta$-shift can be approximated from \textsl{above} and \textsl{below} by a greedy $\beta$-shift of finite type.
        \vspace{1em}
        \item {\bfseries Metric and topological results on survivor sets of intermediate \mbox{$\beta$-transformations}.} Via our correspondence theorem (\Cref{thm:main}), we are able to transfer the results of \cite{KKLL} obtained for open dynamical systems driven by greedy $\beta$-transformations to general intermediate $\beta$-transformations.  Specifically, we show the following, extending the results of \cite{KKLL} and complementing those of \cite{Ur86,N09}. Here we follow the notation used in \cite{KKLL}, and recall  that an infinite word in the alphabet $\{0, 1\}$ is \textsl{balanced} if and only if the number of ones in any two subwords of the same length differ by at most $1$.
            \begin{corollary}\label{Cor_2}
            The bifurcation set $E_{\beta,\alpha}^{+} \coloneqq \{ t \in[0,1) \colon T_{\beta, \alpha}^{n}(t) \not\in [0, t) \; \textup{for all} \; n \in \mathbb{N}_{0} \}$ is a Lebesgue null set. Moreover, if largest lexicographic word in $\Omega_{\beta, \alpha}$ is balanced, then $E_{\beta,\alpha}^{+}$ contains no isolated points.
            \end{corollary}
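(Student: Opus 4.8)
I would prove the two assertions by separate arguments: a symbolic/ergodic argument for the null-set claim (which cannot simply be transported through \Cref{thm:main}, since that conjugacy need not preserve Lebesgue-null sets), and a transport of the corresponding topological statement of \cite{KKLL} for the claim about isolated points. For the first, the starting point is a reformulation. Writing $c\colon[0,1]\to\Omega_{\beta,\alpha}$ for the coding map, which is monotone and a bijection off a countable set, one has for every $t$ outside that exceptional set that $t\in E_{\beta,\alpha}^{+}$ if and only if $\sigma^{n}(c(t))\succeq c(t)$ for all $n\in\mathbb{N}_{0}$; equivalently, $c(t)$ is the lexicographically least element of its own forward orbit closure $\overline{\mathcal{O}(c(t))}$, where $\mathcal{O}(\omega)\coloneqq\{\sigma^{n}\omega\colon n\in\mathbb{N}_{0}\}$. (Monotonicity of $c$ is what turns ``$T_{\beta,\alpha}^{n}(t)\not\in[0,t)$'' into a lexicographic inequality.)

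To finish the null-set claim, let $\mu_{\beta,\alpha}$ be the unique ergodic absolutely continuous invariant measure of $T_{\beta,\alpha}$ and $m\coloneqq c_{*}\mu_{\beta,\alpha}$ its symbolic counterpart, an ergodic, non-atomic, shift-invariant measure. For $m$-almost every $\omega$ the forward orbit of $\omega$ is dense in $\operatorname{supp}(m)$, so $\min(\overline{\mathcal{O}(\omega)})=\min(\operatorname{supp}(m))$, a single fixed sequence; hence the set $A$ of sequences that are lexicographically least in their own orbit closure has $m$-measure zero. By the reformulation above $c(E_{\beta,\alpha}^{+})$ differs from a subset of $A$ by a countable set, so $\mu_{\beta,\alpha}(E_{\beta,\alpha}^{+})=0$; and as soon as Lebesgue measure is absolutely continuous with respect to $\mu_{\beta,\alpha}$—which holds for $\alpha=0$ (recovering \cite{KKLL}) and, more generally, whenever $T_{\beta,\alpha}$ is transitive—this yields $\operatorname{Leb}(E_{\beta,\alpha}^{+})=0$. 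The non-transitive (renormalisable) parameters are reduced to the transitive case by passing to the rescaled core of the renormalisation and checking that the part of $E_{\beta,\alpha}^{+}$ lying outside the core is itself null.

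For the statement about isolated points I would transport the corresponding result of \cite{KKLL} through \Cref{thm:main}. Let $\beta'\in(1,2)$, $t_{0}\in[0,1]$ and the conjugacy $\phi$ between $(T_{\beta',0}\vert_{K_{\beta',0}^{+}(t_{0})},K_{\beta',0}^{+}(t_{0}))$ and $(T_{\beta,\alpha},[0,1])$ be as supplied there; after replacing $t_{0}$ by the canonical representative of its survivor set we may assume $t_{0}\in\overline{E_{\beta',0}^{+}}$, and one first records that $\phi$ may be taken monotone, as is standard for such symbolic conjugacies. The hole $[0,t)$ for $T_{\beta,\alpha}$ then corresponds under $\phi$ to the hole $[0,\phi^{-1}(t))\cap K_{\beta',0}^{+}(t_{0})$ for the greedy system, so $\phi$ restricts to an order isomorphism—and, away from the countable set on which it fails to be injective, a homeomorphism—between $E_{\beta',0}^{+}\cap(t_{0},1)$ and $E_{\beta,\alpha}^{+}$. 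Translating the hypothesis through the symbolic form of \Cref{thm:main}, ``the largest lexicographic word of $\Omega_{\beta,\alpha}$ is balanced'' matches the condition on the greedy kneading data under which \cite{KKLL} proves that $E_{\beta',0}^{+}$ has no isolated points; transporting that conclusion back through $\phi$—using that in this regime $E_{\beta',0}^{+}$ is perfect with no countable relatively open piece, so that deleting the countable exceptional set neither creates nor destroys isolated points—gives that $E_{\beta,\alpha}^{+}$ has no isolated points.

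The hardest parts are the two ``translations'': verifying that, through the not-quite-bijective conjugacy of \Cref{thm:main}, the balanced-largest-word hypothesis on $\Omega_{\beta,\alpha}$ corresponds exactly to the \cite{KKLL} hypothesis (and that its countable exceptional set is harmless for the order-topological structure of the two bifurcation sets), and, in the null-set claim, carrying out the reduction of the renormalisable parameters to the transitive ones. Both are combinatorial and structural rather than conceptual, but that is where the real effort lies.
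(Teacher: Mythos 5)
Your proposal is correct and follows essentially the same route as the paper: the null-set claim rests on ergodicity of the absolutely continuous invariant measure of $T_{\beta,\alpha}$, and the isolated-points claim is obtained by transporting \cite[Theorem 3]{KKLL} through the monotone coding map $\pi_{u(\beta,\alpha)}\circ\tau_{\beta,\alpha}^{+}$ (this is exactly \Cref{prop:char} and \Cref{cor:isoloated_pts}). The only real difference is cosmetic: the paper deduces nullity from the inclusion $E_{\beta,\alpha}^{+}\setminus\{0\}\subseteq\bigcup_{m}K_{\beta,\alpha}^{+}(1/m)$ with each $K_{\beta,\alpha}^{+}(1/m)$ Lebesgue-null, which packages the same ergodic input without your separate symbolic reformulation or the explicit reduction of the non-transitive parameters.
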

        \noindent If the largest lexicographic word in $\Omega_{\beta, \alpha}$ is not balanced, then under an additional technical assumption, in \Cref{cor:isoloated_pts}, we show that there exists a $\delta > 0$, such that $E_{\beta,\alpha}^{+} \cap [0, \delta]$ contains no isolated points. Further, letting  $K^{+}_{\beta, \alpha}(t)$ denote the survivor set $\{ x \in[0,1) \colon T_{\beta,\alpha}^{n}(x)\not \in [0,t) \; \text{for all} \; n \in \mathbb{N}_{0} \}$, we have:
            \begin{corollary}\label{Cor_3}
            The dimension function $\eta_{\beta, \alpha} \colon t \mapsto \dim_{\mathcal{H}}(K_{\beta,\alpha}^{+}(t))$ is a Devil staircase function, that is, $\eta_{\beta,\alpha}(0) = 1$, $\eta_{\beta,\alpha}((1-\alpha)/\beta) = 0$, $\eta_{\beta, \alpha}$ is decreasing, and $\eta_{\beta,\alpha}$ is constant Lebesgue almost everywhere.
            \end{corollary}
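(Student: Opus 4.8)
The plan is to derive all four assertions from \Cref{thm:main}, transferring the Devil staircase theorem for greedy $\beta$-transformations with a hole at zero proved in \cite{KKLL}. Two of the four assertions need no machinery. Since $[0,0)=\emptyset$ we have $K^{+}_{\beta,\alpha}(0)=[0,1)$, so $\eta_{\beta,\alpha}(0)=1$; and for $0\le t\le t'$ the inclusion $[0,t)\subseteq[0,t')$ gives $K^{+}_{\beta,\alpha}(t)\supseteq K^{+}_{\beta,\alpha}(t')$, so $\eta_{\beta,\alpha}$ is decreasing. For $t=(1-\alpha)/\beta$ a surviving point is confined to $[(1-\alpha)/\beta,1)$, on which $T_{\beta,\alpha}$ is the affine branch $x\mapsto\beta x+\alpha-1$ whose fixed point $(1-\alpha)/(\beta-1)\ge 1$ lies outside that interval; hence $T_{\beta,\alpha}(x)<x$ there and the iterates $\beta^{n}\big(x-(1-\alpha)/(\beta-1)\big)+(1-\alpha)/(\beta-1)$ tend to $-\infty$, so every orbit from $[(1-\alpha)/\beta,1)$ eventually enters $[0,(1-\alpha)/\beta)$. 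Thus $K^{+}_{\beta,\alpha}((1-\alpha)/\beta)=\emptyset$ and $\eta_{\beta,\alpha}((1-\alpha)/\beta)=0$. What remains is the Devil staircase property proper, namely that $\eta_{\beta,\alpha}$ is locally constant off a Lebesgue null set (which, for a monotone $\eta_{\beta,\alpha}$, is the meaning of being constant Lebesgue-almost everywhere).

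To prove this I would invoke \Cref{thm:main}: fix the $\beta'\in(1,2)$, $t_{0}\in[0,1]$ and the conjugacy $\phi$ of $(T_{\beta,\alpha},[0,1])$ with $(T_{\beta',0}\vert_{K^{+}_{\beta',0}(t_{0})},K^{+}_{\beta',0}(t_{0}))$ that it provides. From the construction $\phi$ is monotone increasing and, off a countable exceptional set, is a bijection of $[0,1]$ onto $K^{+}_{\beta',0}(t_{0})$ intertwining the two maps; write $h\coloneqq\phi\vert_{[0,1]}$. Since $\phi$ sends $[0,t)$ onto $[0,h(t))\cap K^{+}_{\beta',0}(t_{0})$ and preserves the orbit conditions defining the survivor and bifurcation sets, one obtains (modulo the countable exceptional set)
    \begin{align*}
        \phi\big(K^{+}_{\beta,\alpha}(t)\big)=K^{+}_{\beta',0}\big(h(t)\big)\quad\text{for all }t,\qquad\text{and}\qquad\phi\big(E^{+}_{\beta,\alpha}\big)=E^{+}_{\beta',0}\cap K^{+}_{\beta',0}(t_{0}).
    \end{align*}
Because $T_{\beta,\alpha}$ and $T_{\beta',0}$ have constant slopes $\beta$ and $\beta'$, their survivor sets are repellers of constant-slope maps, so the Bowen-type dimension formula used in \cite{KKLL} gives $\dim_{\mathcal H}K^{+}_{\beta,\alpha}(t)=h_{\mathrm{top}}(\Sigma_{\beta,\alpha}(t))/\log\beta$ and $\dim_{\mathcal H}K^{+}_{\beta',0}(s)=h_{\mathrm{top}}(\Sigma_{\beta',0}(s))/\log\beta'$, where $\Sigma_{\beta,\alpha}(t)\subseteq\Omega_{\beta,\alpha}$ and $\Sigma_{\beta',0}(s)\subseteq\Omega_{\beta',0}$ are the subshifts coding these survivor sets and $h_{\mathrm{top}}$ is topological entropy. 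As $\phi$ conjugates $\Sigma_{\beta,\alpha}(t)$ with $\Sigma_{\beta',0}(h(t))$, these have equal entropy, whence
    \begin{align*}
        \eta_{\beta,\alpha}(t)=\frac{\log\beta'}{\log\beta}\,\dim_{\mathcal H}K^{+}_{\beta',0}\big(h(t)\big)=\frac{\log\beta'}{\log\beta}\,g\big(h(t)\big),\qquad g(s)\coloneqq\dim_{\mathcal H}K^{+}_{\beta',0}(s).
    \end{align*}
(In particular this recovers $\eta_{\beta,\alpha}(0)=1$ from $h_{\mathrm{top}}(\Omega_{\beta,\alpha})=\log\beta$, and $\eta_{\beta,\alpha}((1-\alpha)/\beta)=0$ from the emptiness above.)

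Now I would quote \cite{KKLL}: there $g$ is a Devil staircase, so $g$ is locally constant at every $s$ outside $\overline{E^{+}_{\beta',0}}$, which exceeds $E^{+}_{\beta',0}$ by only countably many points. Let $t\in[0,(1-\alpha)/\beta)$ lie outside $E^{+}_{\beta,\alpha}$, outside the exceptional set of $\phi$, and outside the countable set of jump points of $h$. The second display and $h(t)=\phi(t)\in K^{+}_{\beta',0}(t_{0})$ force $h(t)\notin E^{+}_{\beta',0}$, so $g\equiv c$ on an interval $I\ni h(t)$; as $h$ is monotone and continuous at $t$, the set $\{t':h(t')\in I\}$ contains a neighbourhood of $t$, on which $\eta_{\beta,\alpha}=(\log\beta'/\log\beta)\,g\circ h\equiv(\log\beta'/\log\beta)c$. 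Hence $\eta_{\beta,\alpha}$ fails to be locally constant only on a subset of $E^{+}_{\beta,\alpha}$ together with a countable set; by \Cref{Cor_2} this is Lebesgue null, which completes the argument.

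The point I expect to be the genuine obstacle is the Bowen-type dimension formula rather than the combinatorial bookkeeping: a topological conjugacy does not in general preserve Hausdorff dimension, so the identity $\dim_{\mathcal H}K^{+}_{\beta,\alpha}(t)=h_{\mathrm{top}}(\Sigma_{\beta,\alpha}(t))/\log\beta$ must be established for survivor subshifts that are typically neither of finite type nor sofic --- for a general subshift the Hausdorff dimension in the natural metric can be strictly below its entropy over $\log\beta$, so the lower bound has to be obtained by exhausting $\Sigma_{\beta,\alpha}(t)$ from within by subshifts of finite type, exactly as in \cite{KKLL}. A subsidiary matter is to verify that the conjugacy of \Cref{thm:main} is the expected order-preserving, symbolically realised one, so that initial segments, coding subshifts and the bifurcation sets correspond as used above; this should be implicit in the proof of \Cref{thm:main}, and with it and the dimension formula in hand the transfer is closed by \Cref{Cor_2} and \cite{KKLL}.
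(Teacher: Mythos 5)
Your proposal is correct and takes essentially the same route as the paper: the dimension-transfer identity you derive, $\eta_{\beta,\alpha}(t)=(\log\beta'/\log\beta)\,\dim_{\mathcal H}\bigl(K^{+}_{\beta',0}(\tilde{\pi}_{\beta,\alpha}(t))\bigr)$ via the monotone symbolic conjugacy of \Cref{thm:main}, is exactly \Cref{prop:char} Part~(4), after which the paper likewise concludes by citing the Devil-staircase theorem of \cite{KKLL}. The ``Bowen-type dimension formula'' you correctly single out as the real obstacle is supplied in the paper by Raith's Ledrappier--Young formula \eqref{eq:entopen} and its adaptation to $K^{+}_{\beta,\alpha}(t)$ in \Cref{prop:ent}.
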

        \noindent With \Cref{Cor_1,Cor_3} at hand, we can also prove the following.
            \begin{corollary}\label{Cor_E_beta_alpha}
                The bifurcation set $E_{\beta,\alpha}^{+}$ has full Hausdorff dimension.
            \end{corollary}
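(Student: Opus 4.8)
The starting point is the elementary identity
\[
E_{\beta,\alpha}^{+} = \{\, t \in [0,1) \colon t \in K_{\beta,\alpha}^{+}(t) \,\},
\]
which holds because $T_{\beta,\alpha}^{0}(t) = t \notin [0,t)$ is automatic, so $t$ lies in $E_{\beta,\alpha}^{+}$ exactly when the whole forward orbit of $t$ avoids $[0,t)$. Passing to symbolic dynamics, if $\omega(t) \in \Omega_{\beta,\alpha}$ is the greedy coding of $t$, this says that $t \in E_{\beta,\alpha}^{+}$ if and only if $\omega(t)$ begins with $0$ and is lexicographically \emph{minimal} among its own shifts, $\sigma^{n}\omega(t) \succeq \omega(t)$ for all $n \in \mathbb{N}_{0}$. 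The task is therefore to produce, inside the set of such minimal codings, subsets of Hausdorff dimension arbitrarily close to $1$.

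Fix $\varepsilon > 0$ and use \Cref{Cor_1} to select a shift of finite type $\Omega_{\beta_{n},\alpha_{n}} \subseteq \Omega_{\beta,\alpha}$ with $\beta_{n}$, and hence its topological entropy $\log\beta_{n}$, within $\varepsilon$ of $\log\beta = h_{\mathrm{top}}(\Omega_{\beta,\alpha})$. Inside this finite-type shift I would carry out a (by now standard) construction of a large set of minimal sequences: fix a long word $p$ --- a long prefix of the coding of $0$, so that $p = 0^{N}$ whenever $0^{N}$ is admissible --- and let $\mathcal{C}$ consist of all infinite concatenations $p\,u_{1}\,p\,u_{2}\,p\,u_{3}\cdots$, where the $u_{i}$ range over a family of blocks of a common length $L$ chosen so that (i) every such concatenation is admissible in $\Omega_{\beta_{n},\alpha_{n}}$, and (ii) every proper shift of every such concatenation is lexicographically $\succeq$ the concatenation itself. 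Property (ii) is arranged by taking $|p|$ large (so a shift by at most $|p|$ positions meets a symbol of some $u_{i}$ strictly before the concatenation does) and by excluding blocks that create runs of $0$s as long as $p$; it forces every element of $\mathcal{C}$ to be a minimal sequence of $\Omega_{\beta,\alpha}$ beginning with $p$, whence $\pi_{\beta,\alpha}(\mathcal{C}) \subseteq E_{\beta,\alpha}^{+}$, with all these codings representing points near $0$.

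It then remains to estimate $\dim_{\mathcal H}\pi_{\beta,\alpha}(\mathcal{C})$. Because $\Omega_{\beta_{n},\alpha_{n}}$ lies below $\Omega_{\beta,\alpha}$, every element of $\mathcal{C}$ and all of its shifts stay uniformly below the maximal word of $\Omega_{\beta,\alpha}$, and this is exactly the condition under which $\pi_{\beta,\alpha}$ is bi-Lipschitz on $\mathcal{C}$; since $\mathcal{C}$ is a graph-directed self-similar set, $\dim_{\mathcal H}\pi_{\beta,\alpha}(\mathcal{C})$ equals its symbolic dimension $\frac{\log\#\{u_{i}\}}{(|p|+L)\log\beta}$. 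Choosing $|p|$ large enough that the exclusions in (ii) cost at most $\varepsilon$ in entropy rate, and then $L$ large relative to $|p|$, this dimension exceeds $\log\beta_{n}/\log\beta - 2\varepsilon \geq 1 - 3\varepsilon$ for $n$ large; letting $\varepsilon \to 0$ yields $\dim_{\mathcal H}E_{\beta,\alpha}^{+} = 1$. (Equivalently, running the same construction relative to a hole at a point $t \in E_{\beta,\alpha}^{+}$ close to $0$ gives $\dim_{\mathcal H}E_{\beta,\alpha}^{+} \geq \eta_{\beta,\alpha}(t)$, and by \Cref{Cor_3} the continuity of the Devil staircase $\eta_{\beta,\alpha}$ with $\eta_{\beta,\alpha}(0) = 1$ forces $\eta_{\beta,\alpha}(t) \to 1$ as $t \to 0^{+}$.) The main obstacle is precisely the construction in the previous paragraph: forcing every concatenation to be lexicographically minimal while surrendering only $\varepsilon$ of the entropy --- balancing the length of the forcing prefix $p$ against the freedom left to the blocks $u_{i}$ --- is the delicate combinatorial point, and it is here that the explicit finite-type description furnished by \Cref{Cor_1} does the essential work; a secondary technical point is that $\pi_{\beta,\alpha}$ is only Lipschitz, not bi-Lipschitz, on all of $\Omega_{\beta,\alpha}$, so the construction must be kept inside a subshift sitting uniformly below the maximal word in order that symbolic entropy and Hausdorff dimension coincide.
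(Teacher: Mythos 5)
Your overall strategy -- reduce $E_{\beta,\alpha}^{+}$ to the set of lexicographically minimal codings, approximate $\Omega_{\beta,\alpha}$ from inside by a finite-type shift via \Cref{Cor_1}, and then force minimality by planting a long prefix of the coding of $0$ at periodic positions -- is a legitimate alternative to the paper's route, which instead (i) for transitive finite-type $\Omega_{\beta,\alpha}$ uses the Markov partition and transitivity to place an affine copy of $K_{\beta,\alpha}^{+}(a_{n})\cap I(\nu)$ inside $E_{\beta,\alpha}^{+}$, giving $\dim_{\mathcal H}E_{\beta,\alpha}^{+}\geq\dim_{\mathcal H}K_{\beta,\alpha}^{+}(a_{n})\to 1$ by \Cref{Cor_3}, (ii) removes transitivity via \Cref{thm:G1990+Palmer79}, and (iii) passes to general $(\beta,\alpha)$ via \Cref{Cor_1}, two-sided survivor sets and countable stability of Hausdorff dimension. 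Your parenthetical remark ($\dim_{\mathcal H}E_{\beta,\alpha}^{+}\geq\eta_{\beta,\alpha}(t)$ with $t\to 0^{+}$) is in fact the paper's argument in compressed form -- but that inequality is precisely the content that has to be established, and it does not come for free.

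As written, the proposal has two genuine gaps, both located at the points you yourself flag as delicate. First, the minimality-forcing mechanism is only described for $p=0^{N}$: ``excluding blocks that create runs of $0$s as long as $p$'' has no analogue when $\alpha>0$, where $\tau_{\beta,\alpha}^{+}(0)\neq 0^{\infty}$. In that case $p$ must be a prefix of $\tau_{\beta_{n},\alpha_{n}}^{+}(0)$ (not of $\tau_{\beta,\alpha}^{+}(0)$, which need not be admissible in the smaller shift $\Omega_{\beta_{n},\alpha_{n}}\subseteq\Omega_{\beta,\alpha}$), the exclusion must forbid every occurrence of $p$ at a non-designated position (which requires balancing $|p|\ll L\ll\beta_{n}^{|p|}$ for the entropy loss to be small), and the shifts by positive multiples of $|p|+L$ -- for which the comparison reduces to comparing the block sequences $(u_{j},u_{j+1},\dots)$ with $(u_{1},u_{2},\dots)$ -- are not handled at all. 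Second, the metric step is wrong as justified: $\pi_{\beta,\alpha}$ being ``bi-Lipschitz on $\mathcal{C}$ because everything stays uniformly below the maximal word'' is not correct; one has $\lvert I(\xi)\rvert=\beta^{-\lvert\xi\rvert}\,\lvert T_{\beta,\alpha}^{\lvert\xi\rvert}(J(\xi))\rvert$, and the needed uniform lower bound on $\lvert T_{\beta,\alpha}^{\lvert\xi\rvert}(J(\xi))\rvert$ is exactly what \Cref{lem:geometric_lengths_of_cylinders} supplies for \emph{transitive} subshifts of finite type -- transitivity is nowhere addressed in your argument, and the paper needs \Cref{thm:G1990+Palmer79} precisely to dispose of the non-transitive case. (Relatedly, the open set condition for the induced iterated function system is not automatic, since for $\beta<2$ the projections of full-shift cylinders overlap; one must work with the cylinders of the subshift.) Absent these two steps the proof is a plausible programme rather than a proof; the paper's detour through survivor sets, Raith's formula \eqref{eq:entopen} and \Cref{prop:char} is what replaces them.
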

\end{enumerate}

The sets $K^{+}_{\beta,\alpha}(t)$ can be seen as a level sets of the set of badly approximable numbers in non-integer bases, that is,
    \begin{align*}
        \mathrm{BAD}_{\beta, \alpha}(0) \coloneqq \{ x \in [0,1] \colon 0 \not\in \overline{\{T_{\beta, \alpha}^n(x) \colon n\geq 0\}}\} = \bigcup_{t \in (0, 1)} K^{+}_{\beta, \alpha}(t).
    \end{align*}
Moreover, for $\xi \in [0,1]$ one can study the more general set
    \begin{align*}
        \mathrm{BAD}_{\beta, \alpha}(\xi) \coloneqq \{ x \in [0,1] \colon \xi \not\in \overline{\{T_{\beta, \alpha}^n(x) \colon n\geq 0\}}\},
    \end{align*}
which, by \Cref{Cor_3}, is a set of full Hausdorff dimension.
    
When $\alpha = 0$, F\"arm, Persson and Schmeling \cite{DavidFarm2010} and later Hu and Yu \cite{HY} study these sets and showed that they are winning, and hence that they have the large intersection property.  To our knowledge, the present work, is the first to consider the case $\alpha \neq 0$.  Before stating our results on $\mathrm{BAD}_{\beta, \alpha}(\xi)$, we recall the notion of a winning set.

In the 1960s Schmidt~\cite{S} introduced a topological game in which two players take turns in choosing balls that are a subset of the previously chosen ball. There is a target set $S$ and the objective of Player~$1$ is to make sure that the point that is present in every ball chosen during the game is in $S$. The objective of Player~$2$ is to prevent this. A set is called winning when Player~$1$ can always build a winning strategy no matter how Player~2 plays.

    \begin{definition}
        Let $\alpha$ and $\gamma\in (0,1)$ be fixed and suppose we have two players, Player~1 and Player~2.  Let Player~2 choose a closed initial interval $B_1\subset [0,1]$ and let Player~1 and Player~2 choose nested closed intervals such that $B_1 \supset W_1 \supset B_2 \supset W_2 \supset \ldots$ and $|W_{n+1}|=\alpha |B_n|$ and  $|B_{n+1}|=\gamma |W_n|$. A set $S$ is called $(\alpha,\gamma)$-winning if there is a strategy for Player~2 to ensure that $\bigcap_{i\in \mathbb{N}} W_i \subset S$. The set $S$ is called $\alpha$-winning if it is $(\alpha,\gamma)$-winning for all $\gamma\in (0,1)$ and is called winning if it is $\alpha$-winning for some $\alpha\in (0,1)$.
    \end{definition}

A key attribute of winning which makes it an interesting property to study is that winning sets have full Hausdorff dimension~\cite{S}. Another, is that it persists under taking intersections, that is, for two winning sets their intersection is again winning, and hence of full Hausdorff dimension~\cite{S}; this is not true in general for sets of full Hausdorff dimension.  We also note, the property of winning is preserved under bijective affine transformations.

    \begin{theorem}\label{thm:main_2}
        Given $(\beta, \alpha) \in \Delta$ with $\Omega_{\beta, \alpha}$ a subshift of finite type, and $\xi \in [0, 1]$, the set $\mathrm{Bad}_{\beta,\alpha}(\xi)$ is winning.
    \end{theorem}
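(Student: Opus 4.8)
The plan is to reduce the winning property of $\mathrm{Bad}_{\beta,\alpha}(\xi)$ to a statement about the greedy $\beta'$-shift with a hole, where we already have good structural control. Since $\Omega_{\beta,\alpha}$ is a subshift of finite type, Theorem~\ref{thm:main} supplies $\beta' \in (1,2)$ and $t \in [0,1]$ such that $(T_{\beta,\alpha},[0,1])$ is topologically conjugate to $(T_{\beta',0}\vert_{K^{+}_{\beta',0}(t)}, K^{+}_{\beta',0}(t))$, and the conjugacy is bi-Lipschitz off a countable set (indeed, on each cylinder the conjugacy is affine with slope a ratio of the relevant gap lengths). Since winning is preserved under bijective affine maps, and a countable set does not affect winning (it can be absorbed into Player~2's strategy, or one simply notes removing countably many points from a winning set leaves it winning because winning sets have the countable intersection property and the complement of a point is winning), it suffices to prove that the image of $\mathrm{Bad}_{\beta,\alpha}(\xi)$ under the conjugacy is winning inside $K^{+}_{\beta',0}(t)$. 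That image is the set of points in $K^{+}_{\beta',0}(t)$ whose forward $T_{\beta',0}$-orbit stays away from the image $\xi'$ of $\xi$; in symbolic terms, it is the set of $\omega$ in the greedy $\beta'$-subshift avoiding both the hole $[0,t)$ and a neighbourhood of the point coded by $\xi'$.

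Next I would set up Schmidt's game directly in the symbolic picture. The key point is that, because $\Omega_{\beta,\alpha}$ is of finite type, the conjugate greedy $\beta'$-shift with the hole is also a subshift of finite type (this is implicit in the finite-type hypothesis transferring under Theorem~\ref{thm:main}, and can be extracted from the structure of $K^{+}_{\beta',0}(t)$ when $t \in E^{+}_{\beta',0}$ corresponds to a finite-type hole). A subshift of finite type with a forbidden-word description has the crucial property that from any admissible word one can append a bounded-length connecting block to reach a fixed long "safe" word $w$ that is bounded away (lexicographically, hence metrically) from the forbidden point $\xi'$ and stays in the survivor set. I would use this to describe Player~1's (in the paper's convention, Player~2's) strategy: given $\alpha$ small enough, fix an integer $N$ so that any interval of length $\ge \alpha^{k}$ (for the relevant scale $k$) contains a full cylinder of some controlled depth; inside each ball $B_n$ Player~1 chooses $W_n$ to be (a sub-ball of) a cylinder whose defining word ends with the safe block $w$, forcing every point of $\bigcap W_n$ to have infinitely many coordinates forming $w$ and hence never hitting $\xi'$, while never entering the hole. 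The combinatorial slack needed — that one can always fit such a cylinder inside $B_n$ after Player~2's move — is exactly where the finite-type hypothesis is used, since it bounds the return/connecting times uniformly.

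The main obstacle, and the step I would spend the most care on, is the quantitative fitting argument: after Player~2 picks $B_{n+1}$ of length $\gamma|W_n|$, Player~1 must locate within $B_{n+1}$ a cylinder set of $K^{+}_{\beta',0}(t)$ whose code ends in the prescribed safe word $w$ and whose diameter is at least a fixed fraction of $|B_{n+1}|$, so that the geometric constraint $|W_{n+1}| = \alpha|B_{n+1}|$ can be met for all $\gamma \in (0,1)$; this requires comparing the metric diameters of cylinders (which for $\beta'$-expansions are comparable to $\beta'^{-n}$ up to bounded multiplicative constants, uniformly because of finite type) against the scales $\alpha,\gamma$, and choosing $\alpha$ small relative to $\beta'^{-(N+|w|)}$. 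A secondary technical point is handling the boundary/countable exceptional set of the conjugacy and the endpoints of cylinders cleanly, and verifying that the "avoid a neighbourhood of $\xi'$" condition is genuinely what the closure in the definition of $\mathrm{Bad}_{\beta,\alpha}(\xi)$ demands — one needs that staying lexicographically bounded away from the code of $\xi'$ along a syndetic set of times keeps the orbit out of the closure, which again follows from uniform expansion. Once the fitting lemma is in place, the strategy description and the conclusion that $\bigcap W_i \subset \mathrm{Bad}_{\beta,\alpha}(\xi)$ are routine, giving $\alpha$-winning for all small $\alpha$ and all $\gamma$, hence winning.
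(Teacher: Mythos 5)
Your opening reduction contains a genuine gap that the rest of the argument cannot repair. The conjugacy supplied by \Cref{thm:main} is $\pi_{\beta',0}\circ\tau^{+}_{\beta,\alpha}$, which maps $[0,1]$ onto the survivor set $K^{+}_{\beta',0}(t)$. For $t>0$ this is a Lebesgue-null set of Hausdorff dimension $\log\beta/\log\beta'<1$ (by \eqref{eq:ent} and \Cref{thm:Laurent}, since $\beta'>\beta$), so the conjugacy is certainly not bi-Lipschitz and not piecewise affine: it sends a cylinder of $T_{\beta,\alpha}$ of diameter comparable to $\beta^{-n}$ to a set of diameter comparable to $\beta'^{-n}$, distorting relative lengths by factors $(\beta'/\beta)^{n}$ that are unbounded in $n$. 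The paper's remark that winning is preserved under \emph{bijective affine} maps does not apply here, and indeed cannot: a winning subset of $[0,1]$ has full Hausdorff dimension, so the image of $\mathrm{Bad}_{\beta,\alpha}(\xi)$ inside the dimension-deficient set $K^{+}_{\beta',0}(t)$ is not winning for the standard game on $[0,1]$, and ``winning inside $K^{+}_{\beta',0}(t)$'' is a different game whose relation to the original one you would have to establish from scratch (this is exactly the kind of quasi-symmetry/comparability issue your ``fitting argument'' gestures at, but it is the crux, not a technicality). A second, independent gap: your safe-word strategy requires a uniformly bounded connecting block from any admissible word to a fixed word, i.e.\ irreducibility of the subshift, but $T_{\beta,\alpha}$ fails to be transitive on a set of parameters of positive Lebesgue measure (\Cref{thm:G1990+Palmer79}), and finite type does not imply transitivity. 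Your proposal never addresses this case.

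For comparison, the paper does not pass to the greedy picture at all. It works directly with $T_{\beta,\alpha}$ as a Markov, piecewise locally $C^{1+\delta}$ expanding map for the partition $P_{\beta,\alpha}$ of \eqref{eq:Markov_Partition}, proves uniform two-sided bounds $\rho\beta^{-|\nu|}\le|I(\nu)|\le\beta^{-|\nu|}$ for cylinders (\Cref{lem:geometric_lengths_of_cylinders}), verifies the Hu--Yu geometric condition $H_{x,\gamma}$ for every $x$ and $\gamma$ in the transitive case (\Cref{prop:thm_SFT+Transitive_implies_winning}), and then invokes \Cref{thm_HY_Thm_2.1}. The non-transitive case is reduced to the transitive one via the Glendinning--Palmer decomposition of \Cref{thm:G1990+Palmer79} (where the conjugacies really are linear) together with \Cref{prop:alpha-winning_transport} on powers of the map. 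If you want to salvage your approach, you would need to either (a) prove the Hu--Yu-type condition directly for $T_{\beta,\alpha}$, which is what the paper does, or (b) develop a Schmidt game on the fractal set $K^{+}_{\beta',0}(t)$ and a transfer principle back to $[0,1]$ — a substantially harder route — and in either case treat the non-transitive parameters separately.
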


We remark that in \cite{JT2009} a similar result for $C^{2}$-expanding Markov circle maps was proven, but that intermediate $\beta$-transformations do not fall into this regime. Further, with \Cref{thm:main_2} at hand and with \cite[Theorem 1]{DavidFarm2010} in mind, we conjecture that $\mathrm{Bad}_{\beta,\alpha}(\xi)$ is winning for all $(\beta, \alpha) \in \Delta$ and $\xi \in [0, 1]$.

Our work is organised as follows. In \Cref{sec:prelim} we we present necessary definitions, preliminaries and auxiliary results. \Cref{sec:proof_thm_1_1,sec:proof_thm_1_6} are respectively devoted to proving \Cref{thm:main,thm:main_2}, and \Cref{sec:proof_cor_1_2,sec:proof_cor_1_3_4} respectively contain the proofs of \Cref{Cor_1}, and \Cref{Cor_2,Cor_3}. Additionally, in \Cref{sec:proof_cor_1_3_4}, we demonstrate how our theory may be used to numerically compute the Hausdorff dimension of $K_{\beta,\alpha}^{+}(t)$.

\section{Notation and preliminaries}\label{sec:prelim}
\subsection{Subshifts}

Let $m \geq 2$ denote a natural number and set $\Lambda = \{0, 1, \ldots, m-1\}$. We equip the space $\Lambda^\mathbb{N}$ of infinite sequences indexed by $\mathbb{N}$ with the topology induced by the \textsl{word metric} $\mathscr{D} \colon \Lambda^\mathbb{N} \times \Lambda^\mathbb{N} \to \mathbb{R}$ given by
    \begin{align*}
    \mathscr{D}(\omega, \nu) \coloneqq
        \begin{cases}
        0 & \text{if} \; \omega = \nu,\\
        2^{- \lvert\omega \wedge \nu\rvert + 1} & \text{otherwise}.
        \end{cases}
    \end{align*}
Here, $\rvert \omega \wedge \nu \lvert \coloneqq \min \, \{ \, n \in \mathbb{N} \colon \omega_{n} \neq \nu_n \}$, for $\omega$ and $\nu \in \Lambda^{\mathbb{N}}$ with $\omega \neq \nu$, where for an element $\omega \in \Lambda^{\mathbb{N}}$ we write $\omega=\omega_1\omega_2\cdots$. Note, when equipping $\Lambda$ with the discrete topology, the topology induced by $\mathscr{D}$ on $\Lambda^{\mathbb{N}}$ coincides with the product topology on $\Lambda^{\mathbb{N}}$. We let $\sigma \colon \Lambda^{\mathbb{N}} \to \Lambda^{\mathbb{N}}$ denote the \textsl{left-shift map} defined by $\sigma(\omega_{1} \omega_{2} \cdots) \coloneqq \omega_{2} \omega_{3} \cdots$, and for $n \in \mathbb{N}$, we set $\omega\rvert_{n} = \omega_{1} \omega_{2} \cdots \omega_{n}$. A \textsl{subshift} is any closed set $\Omega \subseteq \Lambda^\mathbb{N}$ with $\sigma(\Omega) \subseteq \Omega$.  Given a subshift $\Omega$, we set $\Omega\vert_{0} = \{ \varepsilon\}$, where $\varepsilon$ denotes the empty word, and for $n \in \mathbb{N}$, we set
    \begin{align*}
    \Omega\lvert_{n} \coloneqq \left\{ \omega_{1} \cdots \omega_{n} \in \Lambda^{n} \colon \,\text{there exists} \; \xi  \in \Omega \; \text{with} \; \xi|_n = \omega_{1} \cdots \omega_{n} \right\}
    \end{align*}
and write $\Omega^{*} \coloneqq \bigcup_{n \in \mathbb{N}_{0}} \Omega\lvert_{n}$ for the collection of all finite words.  We denote by $\lvert \Omega\vert_{n} \rvert$ the cardinality of $\Omega\vert_{n}$, and for $\omega \in \Omega\vert_{n}$, we set $\lvert \omega \rvert = n$.  We extend the domain of $\sigma$ to $\Omega^{*}$, by setting $\sigma(\varepsilon) \coloneqq \varepsilon$, and for $n \in \mathbb{N}$, letting 
    \begin{align*}
        \sigma(\omega_{1} \omega_{2} \cdots \omega_{n}) \coloneqq 
            \begin{cases}
                \omega_{2} \omega_{3} \cdots \omega_{n} & \text{if} \; n \neq 1,\\
                \varepsilon & \text{otherwise}.
            \end{cases}
    \end{align*}
For $\omega = \omega_{1} \cdots \omega_{\lvert \omega \rvert} \in \Omega^{*}$ and $\xi = \xi_{1} \xi_{2} \cdots \in \Omega \cup \Omega^{*}$ we denote the concatenation $\omega_{1} \cdots \omega_{\lvert \omega \rvert} \ \xi_{1} \xi_{2} \cdots$ by $\omega \ \xi$.

    \begin{definition}
        A subshift $\Omega$ is said to be \textsl{of finite type} if there exists $M \in \mathbb{N}$ such that, $\omega_{n - M + 1} \cdots \omega_{n} \ \xi_{1} \cdots \xi_{m} \in \Omega^{*}$, for all $\omega_{1} \cdots \omega_{n}$ and $\xi_{1} \cdots \xi_{m} \in \Omega^{*}$ with $n, m \in \mathbb{N}$ and $n \geq M$, if and only if $\omega_{1} \cdots \omega_{n} \ \xi_{1} \cdots \xi_{m} \in \Omega^{*}$. 
    \end{definition}
 
The following result gives an equivalent condition for when a subshift is of finite type.

    \begin{theorem}[{\cite[Theorem 2.1.8]{LM}}]
        A subshift $\Omega \subseteq \Lambda^{\mathbb{N}}$ is of finite type if and only if there exists a finite set $F \subset \Omega^{*}$ with $\Omega = \mathcal{X}_{F}$, where $\mathcal{X}_{F} \coloneqq \{ \omega \in \Lambda^{\mathbb{N}} \colon \sigma^{m}(\omega)\vert_{\lvert \xi \rvert} \neq \xi \; \text{for all} \; \xi \in F \; \text{and} \; m \in \mathbb{N}\}$.
    \end{theorem}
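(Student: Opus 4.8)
The plan is to prove the two implications separately, in each case first normalising so that the forbidden set consists of words of a single common length. For the ``if'' direction, suppose $\Omega = \mathcal{X}_{F}$ for a finite set $F$ of finite words over $\Lambda$. Put $L \coloneqq \max_{\xi \in F} \lvert \xi \rvert$ and replace $F$ by $F' \coloneqq \{\, w \in \Lambda^{L} \colon \text{some } \xi \in F \text{ is a factor of } w \,\}$; since forbidding $\xi$ is equivalent to forbidding every length-$L$ word containing $\xi$ as a factor, one still has $\Omega = \mathcal{X}_{F'}$, and now every word of $F'$ has length $L$. Set $M \coloneqq L - 1$. Given $\omega_{1} \cdots \omega_{n}$ and $\xi_{1} \cdots \xi_{m}$ in $\Omega^{*}$ with $n \geq M$, the implication $\omega_{1} \cdots \omega_{n} \, \xi_{1} \cdots \xi_{m} \in \Omega^{*} \Rightarrow \omega_{n-M+1} \cdots \omega_{n} \, \xi_{1} \cdots \xi_{m} \in \Omega^{*}$ is immediate, because the right-hand word is a factor of the left-hand one and $\Omega^{*}$ is closed under passing to factors. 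For the reverse implication I would pick $\eta \in \Omega$ extending $\omega_{1} \cdots \omega_{n}$ and $\zeta \in \Omega$ extending $\omega_{n-M+1} \cdots \omega_{n} \, \xi_{1} \cdots \xi_{m}$, and consider the infinite word $\theta \coloneqq \omega_{1} \cdots \omega_{n-M} \, \zeta$. Since the length-$M$ overlap block $\omega_{n-M+1} \cdots \omega_{n}$ agrees with the first $M$ symbols of $\zeta$, one gets $\theta\vert_{n+m} = \omega_{1} \cdots \omega_{n} \, \xi_{1} \cdots \xi_{m}$, and every length-$L$ factor of $\theta$ sits entirely within $\omega_{1} \cdots \omega_{n}$ or entirely within $\zeta$, hence avoids $F'$; so $\theta \in \mathcal{X}_{F'} = \Omega$ and $\omega_{1} \cdots \omega_{n} \, \xi_{1} \cdots \xi_{m} \in \Omega^{*}$. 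This shows $\Omega$ is of finite type with the constant $M$.

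For the ``only if'' direction, assume $\Omega$ is of finite type with constant $M$ and set $F \coloneqq \Lambda^{M+1} \setminus \Omega\vert_{M+1}$, which is finite. The inclusion $\Omega \subseteq \mathcal{X}_{F}$ is clear, since every length-$(M+1)$ factor of an element of $\Omega$ lies in $\Omega\vert_{M+1}$. For $\mathcal{X}_{F} \subseteq \Omega$, fix $\omega \in \mathcal{X}_{F}$; as $\Omega$ is closed it suffices to show $\omega\vert_{n} \in \Omega^{*}$ for all $n$, which I would prove by induction on $n$. For $n \leq M+1$ this holds because $\omega\vert_{M+1}$ is a length-$(M+1)$ factor of $\omega$ avoiding $F$, hence lies in $\Lambda^{M+1} \setminus F = \Omega\vert_{M+1}$, and $\omega\vert_{n}$ is one of its prefixes. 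For the inductive step, given $\omega\vert_{n} \in \Omega^{*}$ with $n \geq M$, note that $\omega_{n-M+1} \cdots \omega_{n+1}$ is a length-$(M+1)$ factor of $\omega$ avoiding $F$, so it lies in $\Omega\vert_{M+1} \subseteq \Omega^{*}$; applying the finite-type property to the words $\omega\vert_{n} \in \Omega^{*}$ and the single letter $\omega_{n+1} \in \Omega^{*}$ then promotes this to $\omega\vert_{n+1} = \omega_{1} \cdots \omega_{n+1} \in \Omega^{*}$. Letting $n \to \infty$ and using that $\Omega$ is closed gives $\omega \in \Omega$, completing this inclusion and hence the proof.

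The delicate point is the splicing step in the first implication: one must arrange the overlap block to have length exactly $M = L - 1$, so that each length-$L$ window of the glued word $\theta$ falls entirely inside one of the two original admissible words, and for this to be sufficient one first has to normalise $F$ to a single block length so that membership in $\mathcal{X}_{F}$ is decided purely by length-$L$ windows. The remaining arguments are routine manipulations of factors together with a compactness argument using that $\Omega$ is closed.
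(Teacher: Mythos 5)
Your proof is correct, and since the paper only cites this result (Lind--Marcus, Theorem 2.1.8) without proving it, the right comparison is with the standard textbook argument, which yours reproduces faithfully: normalise the forbidden set to a single block length $L$ and splice along an overlap of length $M=L-1$ for one implication, and take $F=\Lambda^{M+1}\setminus\Omega\vert_{M+1}$ with an induction on prefix length plus closedness of $\Omega$ for the other. The only caveats are cosmetic and lie in the paper's own phrasing rather than in your argument: the displayed definition of $\mathcal{X}_F$ writes $m\in\mathbb{N}$ where $m\in\mathbb{N}_0$ is clearly intended, and $F\subset\Omega^{*}$ should read $F\subset\Lambda^{*}$, both of which you have (reasonably) read in the standard way.
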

 
Two subshifts $\Omega$ and $\Psi$ are said to be \textsl{topologically conjugate} if there exists a $\phi \colon \Omega \to \Psi$ that is surjective, one-to-one everywhere except on a countable set on which it is at most finite-to-one, and $\sigma \circ \phi(\omega) = \phi \circ \sigma(\omega)$ for all $\omega \in \Omega$.  We call $\phi$ the \textsl{conjugacy}. In the case that $m = 2$, a particular conjugacy which we will make use of is the \textsl{reflection map} $R$ defined by $R(\omega_{1} \omega_{2} \cdots) = (1-\omega_{1})(1-\omega_{2})\cdots$ for $\omega = \omega_{1} \omega_{2} \cdots \in \{0,1\}^{\mathbb{N}}$. This concept of two subshifts being topologically conjugate, naturally extends to general dynamical systems, see for instance \cite{LM,brin_stuck_2002}.

An infinite word $\omega = \omega_{1} \omega_{2} \cdots \in \Lambda^{\mathbb{N}}$ is called \textsl{periodic} with \textsl{period} $n \in \mathbb{N}$ if and only if, for all $m \in \mathbb{N}$, we have $\omega_{1} \cdots \omega_{n} = \omega_{(m - 1)n + 1} \cdots \omega_{m n}$, in which case we write $\omega = \omega\vert_{n}^{\infty}$, and denote the smallest period of $\omega$ by $\operatorname{per}(\omega)$.  Similarly, an infinite word $\omega = \omega_{1} \omega_{2} \cdots \in \Lambda^{\mathbb{N}}$ is called \textsl{eventually periodic} with \textsl{period} $n \in \mathbb{N}$ if there exists $k \in \mathbb{N}$ such that, for all $m \in \mathbb{N}$, we have $\omega_{k+1} \cdots \omega_{k+n} = \omega_{k+(m - 1)n + 1} \cdots \omega_{k+ m n}$, in which case we write $\omega = \omega_{1} \cdots \omega_{k} (\omega_{k+1} \cdots \omega_{k+n})^\infty$.

\subsection{Intermediate \texorpdfstring{$\beta$}{beta}-shifts}\label{sec:beta-shifts}

For $(\beta, \alpha) \in \Delta$ we set $p = p_{\beta, \alpha} = (1-\alpha)/\beta$ and define the \textsl{upper $T_{\beta, \alpha}$-expansion} $\tau_{\beta, \alpha}^{+}(x)$ of $x \in [0, 1]$ to be the infinite word $\omega_{1} \omega_{2} \cdots  \in \{ 0, 1\}^{\mathbb{N}}$, where, for $n \in \mathbb{N}$,
    \begin{align}\label{eq:upper_kneading}
        \omega_{n} \coloneqq
            \begin{cases}
                0 & \quad \text{if } T_{\beta,\alpha}^{n-1}(x) < p,\\
                1 & \quad \text{otherwise,}
            \end{cases}
    \end{align}
and define the \textsl{lower $T_{\beta, \alpha}$-expansion} $x$ to be $\tau^{-}_{\beta, \alpha}(x) \coloneqq \lim_{y \nearrow x} \tau_{\beta,\alpha}^{+}(y)$. Note, one can also define $\tau^{-}_{\beta, \alpha}(x)$ analogously to $\tau^{+}_{\beta, \alpha}(x)$ by using the map $T_{\beta,\alpha}^{-} \colon x \mapsto \beta x + \alpha$ if $x \leq p$, and $x \mapsto \beta x + \alpha - 1$ otherwise, in replace of of $T_{\beta, \alpha}$, and by changing the \textsl{less than}, to \textsl{less than or equal to} in \eqref{eq:upper_kneading}, see \cite[Section 2.2]{LSSS}.  With this in mind, and for ease of notation, sometimes we may write $T_{\beta,\alpha}^{+}$ for $T_{\beta, \alpha}$.

We denote the images of $[0,1)$ under $\tau_{\beta, \alpha}^{+}$ by $\Omega^{+}_{\beta, \alpha}$, the image of $(0,1]$ under $\tau_{\beta, \alpha}^{-}$ by $\Omega^{-}_{\beta, \alpha}$, and set $\Omega_{\beta, \alpha} \coloneqq \Omega_{\beta, \alpha}^{+} \cup \Omega_{\beta, \alpha}^{-}$.  We refer to $\Omega_{\beta, \alpha}$ as an intermediate $\beta$-shift and define the \textsl{upper} and \textsl{lower kneading invariants} of $\Omega_{\beta,\alpha}$ to be the infinite words $\tau^{\pm}_{\beta, \alpha}(p)$, respectively. The following result shows that $\tau^{\pm}_{\beta, \alpha}(p)$ completely determine $\Omega_{\beta,\alpha}$.

    \begin{theorem}[{\cite{P1960,HS:1990,AM:1996,KS:2012,BHV:2011}}]\label{thm:Structure}
        For $(\beta, \alpha) \in \Delta$, the spaces $\Omega_{\beta, \alpha}^{\pm}$ are completely determined by upper and lower kneading invariants of $\Omega_{\beta, \alpha}$, namely
            \begin{align*}
                \Omega_{\beta, \alpha}^{+} &= \{ \omega \in \{ 0, 1\}^{\mathbb{N}} \colon \tau_{\beta, \alpha}^{+}(0) \preceq \sigma^{n}(\omega) \prec \tau_{\beta, \alpha}^{-}(p) \; \textup{or} \; \tau_{\beta, \alpha}^{+}(p) \preceq \sigma^{n}(\omega) \prec \tau_{\beta, \alpha}^{-}(1) \; \textup{for all} \; n \in \mathbb{N}_{0} \},\\
                \Omega_{\beta, \alpha}^{-} &= \{ \omega \in \{ 0, 1\}^{\mathbb{N}} \colon \tau_{\beta, \alpha}^{+}(0) \prec  \sigma^{n}(\omega) \preceq \tau_{\beta, \alpha}^{-}(p) \; \textup{or} \; \tau_{\beta, \alpha}^{+}(p) \prec \sigma^{n}(\omega) \preceq \tau_{\beta, \alpha}^{-}(1) \; \textup{for all} \; n \in \mathbb{N}_{0} \}.
            \end{align*}
        Here, $\prec$, $\preceq$, $\succ$ and $\succeq$ denote the lexicographic orderings on $\{ 0 ,1\}^{\mathbb{N}}$.  Moreover, the cardinality of $\Omega_{\beta, \alpha}^{\pm}$ is equal to that of the continuum, and $\Omega_{\beta, \alpha}$ is closed with respect to the metric $\mathscr{D}$. Hence, $\Omega_{\beta, \alpha}$ is a subshift.
    \end{theorem}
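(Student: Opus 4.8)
The plan is to derive, in turn, the kneading description of $\Omega^{+}_{\beta,\alpha}$ and $\Omega^{-}_{\beta,\alpha}$, the cardinality statement, and closedness; every step rests on three elementary properties of the expansion maps $\tau^{\pm}_{\beta,\alpha}$, which I would establish first. Since each of the two branches of $T^{+}_{\beta,\alpha}$, on $[0,p)$ and on $[p,1)$, is affine with slope $\beta>1$ and increasing, and the branch to which $x$ belongs is recorded by the first symbol of $\tau^{+}_{\beta,\alpha}(x)$ in the correct order ($0$ on $[0,p)$, $1$ on $[p,1)$), an induction on coordinates shows that $x\mapsto\tau^{+}_{\beta,\alpha}(x)$ is non-decreasing for the lexicographic order; it is moreover strictly increasing, for if $x<y$ had the same itinerary $\omega$, then both would lie in the cylinder $\{z\in[0,1):\tau^{+}_{\beta,\alpha}(z)\vert_{n}=\omega\vert_{n}\}$ for every $n$, on which $T^{n}_{\beta,\alpha}$ is affine with slope $\beta^{n}$, forcing $\beta^{n}(y-x)\le 1$ for all $n$. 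From \eqref{eq:upper_kneading} the shift-equivariance $\sigma\circ\tau^{+}_{\beta,\alpha}=\tau^{+}_{\beta,\alpha}\circ T^{+}_{\beta,\alpha}$ on $[0,1)$ is immediate, and a short argument using the right-continuity of the iterates $T^{n}_{\beta,\alpha}$ gives that $\tau^{+}_{\beta,\alpha}$ is right-continuous with left limits $\tau^{-}_{\beta,\alpha}$ (the latter by definition); and since $(\beta,\alpha)\in\Delta$ one checks $T^{+}_{\beta,\alpha}([0,1))=[0,1)$, the branch images $[\alpha,1)$ and $[0,\beta+\alpha-1)$ covering $[0,1)$. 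The corresponding facts for $\tau^{-}_{\beta,\alpha}$ on $(0,1]$ follow by the same arguments with left and right (and $\prec$ and $\preceq$) interchanged, or from the reflection symmetry $\Omega^{-}_{\beta,\alpha}=R\bigl(\Omega^{+}_{\beta,\,2-\beta-\alpha}\bigr)$, so I will treat only $\Omega^{+}_{\beta,\alpha}$.

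For the inclusion ``$\subseteq$'' of the first identity, write $\omega=\tau^{+}_{\beta,\alpha}(x)$ with $x\in[0,1)$; by equivariance $\sigma^{n}(\omega)=\tau^{+}_{\beta,\alpha}(T^{n}_{\beta,\alpha}(x))$ with $T^{n}_{\beta,\alpha}(x)\in[0,1)$. If $T^{n}_{\beta,\alpha}(x)\in[0,p)$ then monotonicity gives $\tau^{+}_{\beta,\alpha}(0)\preceq\sigma^{n}(\omega)\preceq\lim_{y\nearrow p}\tau^{+}_{\beta,\alpha}(y)=\tau^{-}_{\beta,\alpha}(p)$, and the last inequality is strict, since an equality would make $\tau^{+}_{\beta,\alpha}$ constant on $[T^{n}_{\beta,\alpha}(x),p)$, against strict monotonicity; if $T^{n}_{\beta,\alpha}(x)\in[p,1)$ the same reasoning gives $\tau^{+}_{\beta,\alpha}(p)\preceq\sigma^{n}(\omega)\prec\tau^{-}_{\beta,\alpha}(1)$. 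This is precisely the defining condition of the right-hand side.

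The reverse inclusion ``$\supseteq$'' is the crux. Given $\omega$ in the right-hand side, I would put $x\coloneqq\sup\{y\in[0,1):\tau^{+}_{\beta,\alpha}(y)\preceq\omega\}$; this set contains $0$ since the $n=0$ condition forces $\tau^{+}_{\beta,\alpha}(0)\preceq\omega$, and it is bounded away from $1$ since the $n=0$ condition forces $\omega\prec\tau^{-}_{\beta,\alpha}(1)=\lim_{y\nearrow 1}\tau^{+}_{\beta,\alpha}(y)$, so $x\in[0,1)$. As $\{\nu:\nu\preceq\omega\}$ and $\{\nu:\nu\succeq\omega\}$ are readily seen to be closed in the product topology, the definition of $x$ together with monotonicity and the one-sided continuity of $\tau^{\pm}_{\beta,\alpha}$ yields $\tau^{-}_{\beta,\alpha}(x)\preceq\omega\preceq\tau^{+}_{\beta,\alpha}(x)$. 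If $\tau^{+}_{\beta,\alpha}$ is continuous at $x$ we conclude $\omega=\tau^{+}_{\beta,\alpha}(x)\in\Omega^{+}_{\beta,\alpha}$. Otherwise I would invoke the structural fact---to be extracted from the continuity properties of the iterates $T^{n}_{\beta,\alpha}$---that $\tau^{+}_{\beta,\alpha}$ jumps at $x$ exactly when $T^{k}_{\beta,\alpha}(x)=p$ for some minimal $k\ge 0$, in which case $\tau^{+}_{\beta,\alpha}(x)$ and $\tau^{-}_{\beta,\alpha}(x)$ share the length-$k$ prefix $u$ and $\sigma^{k}\tau^{\pm}_{\beta,\alpha}(x)=\tau^{\pm}_{\beta,\alpha}(p)$; then $\tau^{-}_{\beta,\alpha}(x)\preceq\omega\preceq\tau^{+}_{\beta,\alpha}(x)$ forces $\omega\vert_{k}=u$ and $\tau^{-}_{\beta,\alpha}(p)\preceq\sigma^{k}(\omega)\preceq\tau^{+}_{\beta,\alpha}(p)$, and since $\sigma^{k}(\omega)\in[\tau^{+}_{\beta,\alpha}(0),\tau^{-}_{\beta,\alpha}(p))\cup[\tau^{+}_{\beta,\alpha}(p),\tau^{-}_{\beta,\alpha}(1))$ by hypothesis, a short lexicographic check forces $\sigma^{k}(\omega)=\tau^{+}_{\beta,\alpha}(p)$, whence $\omega=u\,\tau^{+}_{\beta,\alpha}(p)=\tau^{+}_{\beta,\alpha}(x)\in\Omega^{+}_{\beta,\alpha}$. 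I expect this paragraph to be the main obstacle: establishing the jump dichotomy for $\tau^{\pm}_{\beta,\alpha}$, and more generally disentangling the degenerate orbits---the forward orbits of $0$, $p$ and $1$, and periodic kneading words---while matching strict against non-strict inequalities, is exactly the point at which the cited references differ in their treatments.

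Finally, cardinality and closedness are quick consequences. Injectivity of $\tau^{+}_{\beta,\alpha}$ gives $\lvert\Omega^{+}_{\beta,\alpha}\rvert=\lvert[0,1)\rvert=\mathfrak{c}$, while $\Omega^{+}_{\beta,\alpha}\subseteq\{0,1\}^{\mathbb{N}}$ gives $\lvert\Omega^{+}_{\beta,\alpha}\rvert\le\mathfrak{c}$; the same holds for $\Omega^{-}_{\beta,\alpha}$, so $\lvert\Omega_{\beta,\alpha}\rvert=\mathfrak{c}$. For closedness, $\tau^{-}_{\beta,\alpha}(y)=\lim_{z\nearrow y}\tau^{+}_{\beta,\alpha}(z)$ shows $\Omega^{-}_{\beta,\alpha}\subseteq\overline{\Omega^{+}_{\beta,\alpha}}$, so $\Omega_{\beta,\alpha}\subseteq\overline{\Omega^{+}_{\beta,\alpha}}$; conversely, if $\tau^{+}_{\beta,\alpha}(y_{k})\to\nu$ with $y_{k}\to y\in[0,1]$, then passing to a subsequence either infinitely many $y_{k}\ge y$, in which case right-continuity gives $\nu=\tau^{+}_{\beta,\alpha}(y)$, or infinitely many $y_{k}<y$ (so $y>0$), in which case $\nu=\tau^{-}_{\beta,\alpha}(y)$; thus $\overline{\Omega^{+}_{\beta,\alpha}}\subseteq\Omega^{+}_{\beta,\alpha}\cup\Omega^{-}_{\beta,\alpha}=\Omega_{\beta,\alpha}$. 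Hence $\Omega_{\beta,\alpha}=\overline{\Omega^{+}_{\beta,\alpha}}$ is $\mathscr{D}$-closed, and since equivariance and the surjectivity of $T^{\pm}_{\beta,\alpha}$ give $\sigma(\Omega^{\pm}_{\beta,\alpha})\subseteq\Omega^{\pm}_{\beta,\alpha}$, we obtain $\sigma(\Omega_{\beta,\alpha})\subseteq\Omega_{\beta,\alpha}$, so $\Omega_{\beta,\alpha}$ is a subshift.
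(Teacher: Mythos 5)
This theorem is one the paper does not prove: it is quoted from the cited literature (Parry, Hubbard--Sparrow, Alsed\`a--Ma\~nosas, Kalle--Steiner, Barnsley--Harding--Vince), so there is no in-paper argument to compare yours against; what can be assessed is whether your reconstruction is sound, and it essentially is, following the classical kneading-theoretic route those references take. The forward inclusion via the equivariance $\sigma\circ\tau^{+}_{\beta,\alpha}=\tau^{+}_{\beta,\alpha}\circ T_{\beta,\alpha}$ together with strict monotonicity (which correctly yields the strict inequalities $\prec\tau^{-}_{\beta,\alpha}(p)$ and $\prec\tau^{-}_{\beta,\alpha}(1)$), and the reverse inclusion via $x=\sup\{y\in[0,1):\tau^{+}_{\beta,\alpha}(y)\preceq\omega\}$, are both correct; in your ``jump'' case the hypothesis on $\sigma^{k}(\omega)$ does force $\sigma^{k}(\omega)=\tau^{+}_{\beta,\alpha}(p)$ and hence $\omega=\tau^{+}_{\beta,\alpha}(x)$ exactly as you claim (in fact the strict sandwich $\omega\prec\tau^{+}_{\beta,\alpha}(x)$ can only occur in the continuity case, so the jump case collapses onto $\omega=\tau^{+}_{\beta,\alpha}(x)$ either way). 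Two points are worth tightening. First, the jump dichotomy you flag as the obstacle is indeed the only real technical content, but it follows just as you indicate: the symbol assignment is locally constant to the right of every point and locally constant to the left of any point whose orbit avoids $p$, so $\tau^{-}_{\beta,\alpha}(x)\neq\tau^{+}_{\beta,\alpha}(x)$ exactly when $T^{k}_{\beta,\alpha}(x)=p$ for a first $k\geq0$, with $\sigma^{k}\tau^{\pm}_{\beta,\alpha}(x)=\tau^{\pm}_{\beta,\alpha}(p)$; the cleanest way to justify this and all your limit interchanges (including the closedness argument along non-monotone sequences $y_{k}\to y$) is to note that each truncation $y\mapsto\tau^{+}_{\beta,\alpha}(y)\vert_{n}$ is nondecreasing into a finite chain, hence eventually constant from either side, so one-sided limits exist along arbitrary approaching sequences and agree with $\tau^{\pm}_{\beta,\alpha}$. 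Second, when $x=0$ the left limit $\tau^{-}_{\beta,\alpha}(x)$ is not defined, but then $0$ lies in your supremum set and you are in the continuous case, so nothing is lost. The cardinality and closedness arguments are fine, and your use of the reflection $R$ and the parameter $(\beta,2-\beta-\alpha)$ to dispose of $\Omega^{-}_{\beta,\alpha}$ is consistent with how the paper itself uses that symmetry elsewhere.
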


This result establishes the importance of the kneading invariants of $\Omega_{\beta, \alpha}$, for a given $(\beta, \alpha) \in \Delta$, and so it is natural to ask, for a fixed $\beta \in (1, 2)$, if they are monotonic or continuous in $\alpha$.  The following proposition answers this.

    \begin{proposition}[{\cite{BHV:2011,Cooperband2018ContinuityOE}}]\label{prop:mon_cont_kneading}
        Let $\beta \in (1,2)$ be fixed.
            \begin{enumerate}[label={\rm(\arabic*)}]
                \item The maps $a \mapsto \tau_{\beta,a}^{\pm}(p_{\beta, a})$ are strictly increasing with respect to the lexicographic ordering.
                \item The map $a \mapsto \tau_{\beta, a}^{+}(p_{\beta, a})$ is right continuous, and the map $a \mapsto \tau_{\beta,a}^{-}(p_{\beta, a})$ is left continuous.
                \item If $\alpha \neq 0$ and $\tau_{\beta, \alpha}^{+}(p_{\beta, \alpha})$ is not periodic, then $a \mapsto \tau_{\beta,a}^{+}(p_{\beta, a})$ is continuous at $\alpha$, and if $\alpha \neq \beta-1$ and $\tau_{\beta,\alpha}^{-}(p_{\beta, \alpha})$ is not periodic, then $a \mapsto \tau_{\beta,a}^{-}(p_{\beta, a})$ is continuous at $\alpha$.
                \item If $\tau_{\beta,\alpha}^{+}(p_{\beta, \alpha})$ is periodic with period $M$, for a given $\alpha \in (0, 2-\beta]$, then given $m \in \mathbb{N}$, there exists a real number $\delta > 0$ so that, $\tau_{\beta,\alpha-\delta'}^{+}(p_{\beta, \alpha-\delta'})\vert_{m} = \tau_{\beta,\alpha}^{+}(p_{\beta, \alpha})\vert_{M}\tau_{\beta,\alpha}^{-}(p_{\beta, \alpha})\vert_{m-M}$, for all $\delta' \in (0, \delta)$. 
                \item If $\tau_{\beta,\alpha}^{-}(p_{\beta, \alpha})$ is periodic with period $M$, for a given $\alpha \in [0, 2-\beta)$, then given $m \in \mathbb{N}$, there exists a real number $\delta > 0$ so that, $\tau_{\beta,\alpha+\delta'}^{-}(p_{\beta, \alpha+\delta'})\vert_{m} = \tau_{\beta,\alpha}^{-}(p_{\beta, \alpha})\vert_{M}\tau_{\beta,\alpha}^{+}(p_{\beta, \alpha})\vert_{m-M}$, for all $\delta'  \in (0, \delta)$.
            \end{enumerate}
    \end{proposition}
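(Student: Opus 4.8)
The plan is to pass to the orbits of the two fixed endpoints. Since $p_{\beta,a}=(1-a)/\beta$, one has $T_{\beta,a}(p_{\beta,a})=0$ and $T_{\beta,a}^{-}(p_{\beta,a})=1$, so $\tau^{+}_{\beta,a}(p_{\beta,a})=1\,\tau^{+}_{\beta,a}(0)$ and $\tau^{-}_{\beta,a}(p_{\beta,a})=0\,\tau^{-}_{\beta,a}(1)$; it therefore suffices to study $a\mapsto\tau^{+}_{\beta,a}(0)$ and $a\mapsto\tau^{-}_{\beta,a}(1)$, and these two are exchanged by the reflection conjugacy $R$ together with the parameter change $a\mapsto 2-\beta-a$, so I treat only $\tau^{+}$ and deduce the $\tau^{-}$ statements (with the corresponding condition on $\alpha$) by reflection. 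The engine is the \emph{orbit-difference identity}: since each branch of $T_{\beta,b}$ has the form $T_{\beta,b}(x)=\beta x+b-j$ with $j\in\{0,1\}$ independent of $b$, a one-line induction shows that if $\tau^{+}_{\beta,a}(0)\vert_{k}=\tau^{+}_{\beta,a'}(0)\vert_{k}$ then $T_{\beta,a'}^{k}(0)-T_{\beta,a}^{k}(0)=(a'-a)(\beta^{k}-1)/(\beta-1)$. Together with the fact that $b\mapsto p_{\beta,b}$ is affine and strictly decreasing, this controls the symbolic coding under small perturbations of $a$ and does essentially all of the work.

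For \textbf{(1)}, I would first prove the comparison lemma: for $\beta$ fixed, $a\le a'$ and $x\le y$ imply $\tau^{+}_{\beta,a}(x)\preceq\tau^{+}_{\beta,a'}(y)$ (and likewise for $\tau^{-}$), by an easy induction on coordinates — the first symbols compare correctly because $p_{\beta,a'}\le p_{\beta,a}$, and when they agree the images stay ordered since each branch is increasing in both the point and the parameter. Putting $x=y=0$ gives that $a\mapsto\tau^{+}_{\beta,a}(0)$, hence $a\mapsto\tau^{+}_{\beta,a}(p_{\beta,a})$, is non-decreasing. Strictness when $a<a'$ follows from the orbit-difference identity: if the two itineraries coincided the orbits would separate at the rate $(a'-a)(\beta^{n}-1)/(\beta-1)\to\infty$, impossible in $[0,1]$.

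For \textbf{(2)} and \textbf{(3)} the key observation is that $\tau^{+}_{\beta,\alpha}(p_{\beta,\alpha})$ is periodic, of some period $M$, precisely when the forward orbit of $0$ under $T_{\beta,\alpha}$ meets $p_{\beta,\alpha}$; in that case the first hitting time is $M-1$, $T_{\beta,\alpha}^{M}(0)=0$, and the orbit of $0$ is periodic of period $M$. \emph{If the orbit of $0$ avoids $p_{\beta,\alpha}$} (the non-periodic case, which is (3)), then for $\alpha\neq0$ it also never returns to $0$ and never reaches $1$, since either would force an earlier visit to $p_{\beta,\alpha}$; so for each $m$ the finitely many points $T_{\beta,\alpha}^{0}(0),\dots,T_{\beta,\alpha}^{m-1}(0)$ lie a uniform positive distance from $\{0,p_{\beta,\alpha},1\}$ (apart from $T_{\beta,\alpha}^{0}(0)=0$, whose symbol is robustly $0$), and the orbit-difference identity then makes the first $m$ symbols of $\tau^{+}_{\beta,a}(0)$ constant for $a$ near $\alpha$ — two-sided continuity. \emph{If the orbit of $0$ meets $p_{\beta,\alpha}$}, the codings still agree up to step $M-1$ for $a$ near $\alpha$, and for $a>\alpha$ the orbit-difference identity puts $T_{\beta,a}^{M-1}(0)$ a small positive amount above $p_{\beta,\alpha}$, hence above $p_{\beta,a}$, so symbol $M$ is still $1$ and $T_{\beta,a}^{M}(0)$ is a small positive number near $0=T_{\beta,\alpha}^{M}(0)$; iterating around the cycle shows the first $m$ symbols stabilise as $a\downarrow\alpha$. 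So $a\mapsto\tau^{+}_{\beta,a}(0)$ is right-continuous at every $\alpha$, which is (2).

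For \textbf{(4)} and \textbf{(5)}, assume $\tau^{+}_{\beta,\alpha}(p_{\beta,\alpha})$ is periodic of period $M$ and take $a=\alpha-\delta'<\alpha$. The $a$- and $\alpha$-orbits of $0$ agree symbolically up to step $M-1$, but now $T_{\beta,a}^{M-1}(0)=p_{\beta,\alpha}-\delta'(\beta^{M-1}-1)/(\beta-1)$ lies below $p_{\beta,a}=p_{\beta,\alpha}+\delta'/\beta$, so symbol $M$ of $\tau^{+}_{\beta,a}(0)$ flips from $1$ to $0$; feeding this through the left branch gives $T_{\beta,a}^{M}(0)=1-\delta'-\beta\delta'(\beta^{M-1}-1)/(\beta-1)$, just below $1$ (while $T_{\beta,\alpha}^{M}(0)=0$). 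Since $\tau^{-}_{\beta,a}(1)=\lim_{y\nearrow1}\tau^{+}_{\beta,a}(y)$ by definition, for $\delta'$ small the symbols of $\tau^{+}_{\beta,a}(0)$ from place $M+1$ on agree, for as long as wanted, with $\tau^{-}_{\beta,a}(1)$, and by left-continuity of $a\mapsto\tau^{-}_{\beta,a}(1)$ (the reflection of (2)) with $\tau^{-}_{\beta,\alpha}(1)$. Reassembling, using $\tau^{+}_{\beta,b}(p_{\beta,b})=1\,\tau^{+}_{\beta,b}(0)$ and $\tau^{-}_{\beta,\alpha}(p_{\beta,\alpha})=0\,\tau^{-}_{\beta,\alpha}(1)$, produces exactly $\tau^{+}_{\beta,\alpha-\delta'}(p_{\beta,\alpha-\delta'})\vert_{m}=\tau^{+}_{\beta,\alpha}(p_{\beta,\alpha})\vert_{M}\,\tau^{-}_{\beta,\alpha}(p_{\beta,\alpha})\vert_{m-M}$ for all small $\delta'$, which is (4); and (5) is its reflection. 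This last part is the main obstacle: one must control simultaneously that the coding flips exactly once (at step $M$) and deposits the orbit just below $1$, that the coding of a point just below $1$ matches $\tau^{-}_{\beta,a}(1)$ for the required number of digits \emph{uniformly} in $a$ near $\alpha$, and the left-continuity of $a\mapsto\tau^{-}_{\beta,a}(1)$ — itself a reflected instance of (2) — so the argument must be sequenced as (1), then the $\tau^{+}$ halves of (2) and (3), then their $\tau^{-}$ reflections, and only then (4) and (5), to avoid circularity.
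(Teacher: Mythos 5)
The paper does not prove this proposition at all --- it is quoted with citations to the literature --- so there is no in-paper argument to measure you against; your proof must stand on its own. Its architecture is sound: reducing to the orbits of $0$ and $1$ via $\tau^{+}_{\beta,a}(p_{\beta,a})=1\,\tau^{+}_{\beta,a}(0)$ and $\tau^{-}_{\beta,a}(p_{\beta,a})=0\,\tau^{-}_{\beta,a}(1)$, the orbit-difference identity $T^{k}_{\beta,a'}(0)-T^{k}_{\beta,a}(0)=(a'-a)(\beta^{k}-1)/(\beta-1)$, and the reflection $R$ with $a\mapsto 2-\beta-a$ are all correct, and parts (1)--(3) are essentially complete: the comparison lemma plus unboundedness of $(a'-a)(\beta^{n}-1)/(\beta-1)$ gives strict monotonicity, the equivalence ``$\tau^{+}_{\beta,\alpha}(p_{\beta,\alpha})$ periodic of period $M$ iff $T^{M-1}_{\beta,\alpha}(0)=p_{\beta,\alpha}$'' is right, and the finite-orbit perturbation argument delivers the continuity claims. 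Two small remarks: your reflection produces the exceptional point $\alpha=2-\beta$ in the $\tau^{-}$ half of (3) rather than the stated $\beta-1$ (note $\beta-1>2-\beta$ for $\beta>3/2$, so the stated condition is vacuous there and is presumably a misprint for $2-\beta$); and your reading of (4)--(5) tacitly takes $M=\operatorname{per}(\tau^{+}_{\beta,\alpha}(p_{\beta,\alpha}))$, which is the only reading under which the displayed formula can hold, so that is fine.

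The one genuine gap is the tail-matching step in (4)--(5), which you flag but do not discharge. You correctly compute that for $a=\alpha-\delta'$ the first symbol flip occurs at position $M$ and deposits the orbit at $T^{M}_{\beta,a}(0)=1-\delta'(\beta^{M}-1)/(\beta-1)$; but the claim that the next $m-M-1$ symbols equal $\tau^{-}_{\beta,\alpha}(1)\vert_{m-M-1}$ is only asserted. As you decompose it (first compare $\tau^{+}_{\beta,a}$ of a point just below $1$ with $\tau^{-}_{\beta,a}(1)$, then invoke left-continuity in $a$), there is a real uniformity problem: the depth to which $\tau^{+}_{\beta,a}(y)$ agrees with $\tau^{-}_{\beta,a}(1)$ for $y\in(1-\epsilon,1)$ depends on $a$, and you need it at $\epsilon\asymp\delta'$ simultaneously with $a=\alpha-\delta'$. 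The clean fix is to drop the intermediate comparison and track the $T_{\beta,a}$-orbit of $1-\epsilon$ directly against the $T^{-}_{\beta,\alpha}$-orbit of $1$: if the branches agree through step $k$, the difference is $-\beta^{k}\epsilon-\delta'(\beta^{k}-1)/(\beta-1)$, so the perturbed orbit sits slightly \emph{below} the reference orbit while $p_{\beta,a}=p_{\beta,\alpha}+\delta'/\beta$ sits slightly \emph{above} $p_{\beta,\alpha}$. Hence at the finitely many times $j<k$ with $(T^{-}_{\beta,\alpha})^{j}(1)\neq p_{\beta,\alpha}$ the symbol is stable for $\delta'$ small, and at any time with $(T^{-}_{\beta,\alpha})^{j}(1)=p_{\beta,\alpha}$ (symbol $0$ under the $\leq$ convention) the perturbed point lies strictly below $p_{\beta,a}$ and also receives symbol $0$. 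The fact that both perturbations are one-sided in the direction matching the $\tau^{-}$ convention is exactly what makes this work; with that inserted, (4) is complete and (5) follows by your reflection.
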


Another natural question to ask is when do two infinite words in the alphabet $\{0, 1\}$ give rise to kneading invariants of an intermediate $\beta$-shift. This question was addressed in \cite{barnsley_steiner_vince_2014} where the following solution was derived and for which we require the following notation. Given $\omega$ and $\nu \in \{0,1\}^\mathbb{N}$ with $\sigma(\nu) \preceq \omega \preceq \nu \preceq \sigma(\omega)$ we set
    \begin{align*}
        \Omega^{+}(\omega,\nu) &\coloneqq \{ \xi \in \{0,1\}^{\mathbb{N}} \colon \sigma(\nu) \preceq \sigma^{n}(\xi) \prec \omega \; \text{or} \; \nu \preceq \sigma^{n}(\xi) \prec \sigma(\omega) \; \text{for all} \; n \in \mathbb{N}_{0} \},\\
        \Omega^{-}(\omega,\nu) &\coloneqq \{ \xi \in \{0,1\}^{\mathbb{N}} \colon \sigma(\nu) \prec \sigma^n(\xi) \preceq \omega \; \text{or} \; \nu \prec \sigma^{n} (\xi) \preceq \sigma(\omega) \; \text{for all} \; n \in \mathbb{N}_{0} \}.
    \end{align*}

    \begin{theorem}[{\cite{barnsley_steiner_vince_2014}}]\label{thm:BSV14}
        Two infinite words $\omega = \omega_{1}\omega_{2}\cdots$ and $\nu=\nu_{1}\nu_{2}\cdots \in \{0,1\}^\mathbb{N}$ are kneading invariants of an intermediate $\beta$-shift $\Omega_{\beta, \alpha}$, for some $(\beta, \alpha) \in \Delta$ if and only if the following four conditions hold.
            \begin{enumerate}[label={\rm(\arabic*)}]
                \item $\omega_1=0$ and $\nu_1=1$, 
                \item $\omega\in\Omega^-(\omega, \nu) $ and $\nu\in\Omega^+(\omega, \nu)$,
                \item $\lim_{n\to\infty} \log(|\Omega^+|_n)>0$, and
                \item if $\omega,\nu\in\{\xi,\zeta \}^\mathbb{N}$ for two finite words $\xi$ and $\zeta$ in the alphabet $\{0,1\}$ with length greater than or equal to three, such that $\xi_1\xi_2=01$ , $\zeta_1\zeta_2=10$, $\xi^\infty \in \Omega^-(\xi^\infty, \zeta^\infty)$ and $\zeta^\infty \in \Omega^{+}(\xi^\infty, \zeta^\infty)$, then $\omega=\xi^\infty$ and $\nu=\zeta^\infty$.
            \end{enumerate}
    \end{theorem}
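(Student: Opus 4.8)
The plan is to prove the two implications separately, using as a bridge between the interval picture of \Cref{thm:Structure} and the lexicographic sets $\Omega^{\pm}(\omega,\nu)$ the following elementary observation: for $(\beta,\alpha)\in\Delta$ with $p=p_{\beta,\alpha}=(1-\alpha)/\beta\in(0,1)$ one has $T^{+}_{\beta,\alpha}(p)=\beta p+\alpha-1=0$ and $T^{-}_{\beta,\alpha}(p)=\beta p+\alpha=1$, so reading off first letters and applying $\sigma$ shows that $\tau^{-}_{\beta,\alpha}(p)$ begins with $0$, that $\tau^{+}_{\beta,\alpha}(p)$ begins with $1$, that $\sigma(\tau^{+}_{\beta,\alpha}(p))=\tau^{+}_{\beta,\alpha}(0)$, and that $\sigma(\tau^{-}_{\beta,\alpha}(p))=\tau^{-}_{\beta,\alpha}(1)$; substituting these four identities into \Cref{thm:Structure} yields $\Omega^{\pm}_{\beta,\alpha}=\Omega^{\pm}(\tau^{-}_{\beta,\alpha}(p),\tau^{+}_{\beta,\alpha}(p))$, so that in the theorem $\omega$ is to be the lower kneading invariant $\tau^{-}_{\beta,\alpha}(p)$ and $\nu$ the upper one $\tau^{+}_{\beta,\alpha}(p)$. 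For necessity I would then argue as follows. Suppose $\omega=\tau^{-}_{\beta,\alpha}(p)$ and $\nu=\tau^{+}_{\beta,\alpha}(p)$ for some $(\beta,\alpha)\in\Delta$. Condition (1) is the above digit computation. Condition (2) holds because $p\in(0,1)$ lies in the domains of both $\tau^{+}_{\beta,\alpha}$ (namely $[0,1)$) and $\tau^{-}_{\beta,\alpha}$ (namely $(0,1]$), so the bridge gives $\nu\in\Omega^{+}_{\beta,\alpha}=\Omega^{+}(\omega,\nu)$ and $\omega\in\Omega^{-}_{\beta,\alpha}=\Omega^{-}(\omega,\nu)$. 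Condition (3) follows since the partition of $[0,1)$ according to the length-$n$ prefix of $\tau^{+}_{\beta,\alpha}$ has $\lvert\Omega^{+}_{\beta,\alpha}\rvert_{n}$ cells, each of diameter at most $\beta^{-n}$ because $T_{\beta,\alpha}$ expands by $\beta$, so $\lvert\Omega^{+}(\omega,\nu)\rvert_{n}\geq\beta^{n}\to\infty$. For (4) I would argue by contradiction: if $\omega,\nu\in\{\xi,\zeta\}^{\mathbb{N}}$ with $\xi_{1}\xi_{2}=01$, $\zeta_{1}\zeta_{2}=10$, $\lvert\xi\rvert,\lvert\zeta\rvert\geq3$, $\xi^{\infty}\in\Omega^{-}(\xi^{\infty},\zeta^{\infty})$, $\zeta^{\infty}\in\Omega^{+}(\xi^{\infty},\zeta^{\infty})$ but, say, $\nu\neq\zeta^{\infty}$, then at the first block of the $\{\xi,\zeta\}$-decomposition of $\nu$ where it deviates from $\zeta^{\infty}$ the corresponding iterate of $p$ under $T_{\beta,\alpha}$ breaks one of the inequalities that $\zeta^{\infty}$ obeys in $\Omega^{+}(\xi^{\infty},\zeta^{\infty})$, contradicting $\tau^{+}_{\beta,\alpha}(p)=\nu$; the case $\omega\neq\xi^{\infty}$ is symmetric.

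For sufficiency, let $(\omega,\nu)$ satisfy (1)--(4). First I would note that evaluating the defining inequalities of $\Omega^{\pm}(\omega,\nu)$ at $n=0$ and using $\omega_{1}=0<1=\nu_{1}$ gives $\sigma(\nu)\prec\omega\prec\nu\prec\sigma(\omega)$, so $\Omega(\omega,\nu)\coloneqq\Omega^{+}(\omega,\nu)\cup\Omega^{-}(\omega,\nu)$ is well defined; it is shift-invariant and closed, hence a subshift, and since $1^{\infty}\notin\Omega^{+}(\omega,\nu)$ (no word is $\succ1^{\infty}$) it is a proper subshift of $\{0,1\}^{\mathbb{N}}$, so by compactness some cylinder is forbidden and $h\coloneqq\lim_{n\to\infty}n^{-1}\log\lvert\Omega^{+}(\omega,\nu)\rvert_{n}<\log2$; by (3), $h>0$. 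Set $\beta\coloneqq\exp(h)\in(1,2)$. As $T_{\beta,\alpha}$ is a constant-slope-$\beta$ map, its topological entropy is $\log\beta$ for every admissible $\alpha$, so this is the only value of $\beta$ that can occur; it then remains to produce $\alpha\in[0,2-\beta]$ with $\tau^{+}_{\beta,\alpha}(p_{\beta,\alpha})=\nu$ and $\tau^{-}_{\beta,\alpha}(p_{\beta,\alpha})=\omega$, because \Cref{thm:Structure} together with the bridge then forces $\Omega_{\beta,\alpha}=\Omega(\omega,\nu)$, and we are done.

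To produce $\alpha$ I would run a monotonicity and intermediate-value argument on $[0,2-\beta]$ using \Cref{prop:mon_cont_kneading}. By part (1) the maps $a\mapsto\tau^{\pm}_{\beta,a}(p_{\beta,a})$ are strictly increasing for $\prec$, and since $\tau^{+}_{\beta,0}(p_{\beta,0})=10^{\infty}$ and $\tau^{-}_{\beta,2-\beta}(p_{\beta,2-\beta})=01^{\infty}$, the chain $\sigma(\nu)\prec\omega\prec\nu\prec\sigma(\omega)$ places the target pair strictly between the two extreme pairs. Put $a^{\ast}\coloneqq\sup\{a\in[0,2-\beta]\colon\tau^{+}_{\beta,a}(p_{\beta,a})\preceq\nu\}\in(0,2-\beta)$; using the right-continuity of $a\mapsto\tau^{+}_{\beta,a}(p_{\beta,a})$ and left-continuity of $a\mapsto\tau^{-}_{\beta,a}(p_{\beta,a})$ (part (2)) together with monotonicity, one obtains $\lim_{a\nearrow a^{\ast}}\tau^{+}_{\beta,a}(p_{\beta,a})\preceq\nu\preceq\tau^{+}_{\beta,a^{\ast}}(p_{\beta,a^{\ast}})$. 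If both are equalities, part (3) and condition (2) also pin down $\tau^{-}_{\beta,a^{\ast}}(p_{\beta,a^{\ast}})=\omega$, and $\alpha=a^{\ast}$ works. Otherwise $\nu$ lies strictly inside a jump of $a\mapsto\tau^{+}_{\beta,a}(p_{\beta,a})$ at $a^{\ast}$, which by part (4) consists of the words strictly between $\zeta\,\tau^{-}_{\beta,a^{\ast}}(p_{\beta,a^{\ast}})$ and $\zeta^{\infty}$, where $\zeta\coloneqq\tau^{+}_{\beta,a^{\ast}}(p_{\beta,a^{\ast}})\vert_{M}$ is the period word; since $T^{+}_{\beta,a^{\ast}}(p)=0$ we have $\zeta_{1}\zeta_{2}=10$, and in the subcase that $\tau^{-}_{\beta,a^{\ast}}(p_{\beta,a^{\ast}})$ is also periodic its period word $\xi$ satisfies $\xi_{1}\xi_{2}=01$. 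A lexicographic analysis of constraints (1)--(2) then forces any $\nu$ strictly inside this jump that extends to a pair satisfying (2) to have $\omega,\nu\in\{\xi,\zeta\}^{\mathbb{N}}$, whence condition (4) gives $\nu=\zeta^{\infty}$, contradicting interiority; the symmetric analysis handles a jump of the $\tau^{-}$ map, and a short direct argument handles the degenerate low-period cases that (4) does not cover. In all cases $\alpha=a^{\ast}$ does the job.

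The main obstacle is the last case analysis: one must show that a pair satisfying (1)--(3) which fails to lie in the range of $a\mapsto(\tau^{+}_{\beta,a}(p_{\beta,a}),\tau^{-}_{\beta,a}(p_{\beta,a}))$ necessarily violates (4). Equivalently, the ``gaps'' produced by the jump discontinuities of the kneading-invariant maps --- which, by \Cref{prop:mon_cont_kneading}(4)--(5), are the renormalisation windows attached to a two-letter alphabet $\{\xi,\zeta\}$ --- must be shown to contain precisely the pairs that (4) excludes. Making this renormalisation correspondence precise, and in particular checking that the substituted quotient pair is again a legitimate kneading pair so that the argument does not become circular, is where essentially all of the combinatorial work lies; the remaining ingredients (the identities at $p$, the entropy estimate, and the monotonicity and semicontinuity input from \Cref{prop:mon_cont_kneading}) are comparatively routine.
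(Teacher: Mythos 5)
First, a point of reference: the paper does not prove \Cref{thm:BSV14} at all --- it is imported verbatim from \cite{barnsley_steiner_vince_2014} and used as a black box --- so there is no in-paper argument to compare yours against; your attempt has to be judged on its own. The scaffolding you set up is sound: the identities $T^{+}_{\beta,\alpha}(p)=0$, $T^{-}_{\beta,\alpha}(p)=1$, hence $\sigma(\tau^{+}_{\beta,\alpha}(p))=\tau^{+}_{\beta,\alpha}(0)$ and $\sigma(\tau^{-}_{\beta,\alpha}(p))=\tau^{-}_{\beta,\alpha}(1)$, do turn \Cref{thm:Structure} into $\Omega^{\pm}_{\beta,\alpha}=\Omega^{\pm}(\tau^{-}_{\beta,\alpha}(p),\tau^{+}_{\beta,\alpha}(p))$, and your necessity arguments for conditions (1)--(3) are correct (the cylinder-counting bound $\lvert\Omega^{+}_{\beta,\alpha}\vert_{n}\geq\beta^{n}$ is fine). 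But the proof has two genuine gaps, one in each direction.

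For necessity of (4), your contradiction is not an argument: if $\nu=\tau^{+}_{\beta,\alpha}(p)$ deviates from $\zeta^{\infty}$ in its $\{\xi,\zeta\}$-decomposition, the inequalities ``that $\zeta^{\infty}$ obeys in $\Omega^{+}(\xi^{\infty},\zeta^{\infty})$'' are simply not constraints that $\nu$ is required to satisfy --- the only constraints on $\nu$ are membership in $\Omega^{+}(\omega,\nu)$, i.e.\ conditions (1)--(3). That no purely lexicographic ``first deviation'' argument can work is shown by the paper's own example in the proof of \Cref{thm:main}: the pair $\omega=011(100)^{\infty}$, $\nu=100(011)^{\infty}$ satisfies (1)--(3) yet is not a kneading pair, so necessity of (4) must genuinely use realizability by a map $T_{\beta,\alpha}$ (e.g.\ the substitution/renormalisation structure attached to $\{\xi,\zeta\}$, or a slope computation in the spirit of \Cref{thm:Laurent} showing the renormalised pair would force slope $1$); none of this is in your sketch. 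For sufficiency, fixing $\beta=\exp(h)$ and running monotonicity plus one-sided continuity of $a\mapsto\tau^{\pm}_{\beta,a}(p_{\beta,a})$ from \Cref{prop:mon_cont_kneading} is a reasonable skeleton, but the decisive step --- showing that the pairs lying in a jump of these maps are exactly those excluded by (4), that the quotient pair produced by the substitution is again an admissible kneading pair (your circularity worry), and that the blocks of length at most two, which (4) deliberately does not exclude, cause no trouble --- is precisely what you defer as ``where essentially all of the combinatorial work lies.'' Since that is the actual content of the Barnsley--Steiner--Vince theorem, the proposal as it stands is an outline with the core missing rather than a proof; the remaining quibbles (why $h<\log 2$, i.e.\ why the language of $\Omega^{+}(\omega,\nu)$ omits a finite word, and why $h$ must equal $\log\beta$ for any realising $\beta$) are minor by comparison but should also be spelled out.
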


This result together with \Cref{thm:Structure} can be seen as a generalisation of the following seminal result of Parry.

    \begin{theorem}[{\cite[Corollary 1]{P1960}}]\label{thm:Parry_converse}
        If $\omega \in \{0, 1\}^{\mathbb{N}}$ with $\sigma^{n}(\omega) \neq 0^\infty$ for all $n \in \mathbb{N}$, then there exists a $\beta \in (1,2)$ with $\omega = \tau_{\beta, 0}^{-}(1)$ if and only if  $\sigma^{m}(\omega) \preceq \omega$ for all $m \in \mathbb{N}$.
    \end{theorem}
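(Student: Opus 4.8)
The plan is to prove the two implications separately, throughout assuming the standing hypothesis that $\sigma^{n}(\omega) \neq 0^{\infty}$ for all $n \in \mathbb{N}$. For the forward (``only if'') implication I would suppose $\omega = \tau^{-}_{\beta,0}(1)$ for some $\beta \in (1,2)$ and argue via the compatibility of $\tau^{-}_{\beta,0}$ with the dynamics. Working with the description of $\tau^{-}_{\beta,0}$ through the lower map $T^{-}_{\beta,0}$ (replace $<$ by $\leq$ in \eqref{eq:upper_kneading}), the identity $\sigma \circ \tau^{-}_{\beta,0} = \tau^{-}_{\beta,0} \circ T^{-}_{\beta,0}$ on $(0,1]$ is immediate from the digit definition, and $T^{-}_{\beta,0}$ maps $(0,1]$ into itself (it is $x \mapsto \beta x$ on $(0,1/\beta]$, with values in $(0,1]$, and $x \mapsto \beta x - 1$ on $(1/\beta,1]$, with values in $(0,\beta-1]\subset(0,1)$). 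Iterating, $\sigma^{m}(\omega) = \tau^{-}_{\beta,0}\big((T^{-}_{\beta,0})^{m}(1)\big)$ for every $m\in\mathbb{N}$, and since $\tau^{-}_{\beta,0}\colon(0,1]\to\{0,1\}^{\mathbb{N}}$ is non-decreasing for $\preceq$ (a standard consequence of $T^{-}_{\beta,0}$ being increasing on each branch) while $(T^{-}_{\beta,0})^{m}(1)\leq 1$, we obtain $\sigma^{m}(\omega)\preceq\tau^{-}_{\beta,0}(1)=\omega$ for all $m$. (In fact, for $\beta\in(1,2)$ no shift of $\tau^{-}_{\beta,0}(1)$ is $0^{\infty}$, so the hypothesis $\sigma^{n}(\omega)\neq 0^{\infty}$ is automatically satisfied in this direction.)

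For the converse, I would assume $\sigma^{m}(\omega)\preceq\omega$ for all $m\in\mathbb{N}$; then $\omega_{1}=1$ (else $\omega=0^{\infty}$), $\omega\neq 1 0^{\infty}$ (since $\sigma(1 0^{\infty})=0^{\infty}$), and I set aside the degenerate case $\omega=1^{\infty}$ (which would force the boundary value $\beta=2$). Writing $\pi_{b}(\xi)\coloneqq\sum_{n\in\mathbb{N}}\xi_{n}b^{-n}$, the map $b\mapsto\pi_{b}(\omega)$ is continuous and strictly decreasing on $(1,2)$ with limit $\sum_{n}\omega_{n}\geq 2$ as $b\to 1^{+}$ and limit $\pi_{2}(\omega)<1$ as $b\to 2^{-}$, so the intermediate value theorem gives a unique $\beta\in(1,2)$ with $\pi_{\beta}(\omega)=1$. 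I would then set $t_{0}=1$, $t_{k}=(T^{-}_{\beta,0})^{k}(1)$ and prove by induction on $k$ that $t_{k}=\pi_{\beta}(\sigma^{k}\omega)$ and that the $k$-th digit of $\tau^{-}_{\beta,0}(1)$ equals $\omega_{k}$. The inductive step uses (with $p=1/\beta$ and the $\leq$-convention): if $\omega_{k+1}=1$ then $\pi_{\beta}(\sigma^{k}\omega)=\beta^{-1}\big(1+\pi_{\beta}(\sigma^{k+1}\omega)\big)>p$, because $\sigma^{k+1}\omega\neq 0^{\infty}$ forces $\pi_{\beta}(\sigma^{k+1}\omega)>0$; and if $\omega_{k+1}=0$ then $\pi_{\beta}(\sigma^{k}\omega)=\beta^{-1}\pi_{\beta}(\sigma^{k+1}\omega)\leq p$ provided $\pi_{\beta}(\sigma^{k+1}\omega)\leq 1$. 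Hence, granting the key estimate
\begin{align*}
\pi_{\beta}(\sigma^{k}\omega)\leq 1 \qquad\text{for all }k\in\mathbb{N},
\end{align*}
the digit emitted at step $k$ is exactly $\omega_{k+1}$ and $t_{k+1}=\beta t_{k}-\omega_{k+1}=\pi_{\beta}(\sigma^{k+1}\omega)$, closing the induction and yielding $\tau^{-}_{\beta,0}(1)=\omega$.

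The hard part is the key estimate, which I would prove by contradiction. If $\pi_{\beta}(\sigma^{k}\omega)>1$ for some $k\geq 1$, let $j$ be the least index with $(\sigma^{k}\omega)_{j}\neq\omega_{j}$; since $\sigma^{k}\omega\preceq\omega$, the two words agree on their first $j-1$ letters with $(\sigma^{k}\omega)_{j}=0$ and $\omega_{j}=1$. Substituting $\pi_{\beta}(\omega)=1$ for the common prefix, a short computation gives
\begin{align*}
\pi_{\beta}(\sigma^{k+j}\omega)=\beta^{j}\big(\pi_{\beta}(\sigma^{k}\omega)-1\big)+\pi_{\beta}(\sigma^{j}\omega)+1,
\end{align*}
so the excess over $1$ obeys $\pi_{\beta}(\sigma^{k+j}\omega)-1\geq\beta\big(\pi_{\beta}(\sigma^{k}\omega)-1\big)>0$. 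Since $k+j\geq 1$ and $\sigma^{k+j}\omega\preceq\omega$, the same step applies again, and iterating produces shifts of $\omega$ whose $\pi_{\beta}$-value exceeds $1$ by an amount growing at least like $\beta^{m}$, contradicting $\pi_{\beta}(\xi)\leq 1/(\beta-1)$ for all $\xi$. This proves the estimate. Apart from this self-referential growth argument, the only delicate point is the bookkeeping around the $\leq$ versus $<$ conventions distinguishing $\tau^{-}_{\beta,0}$ from $\tau^{+}_{\beta,0}$ (the latter of which may have a shift equal to $0^{\infty}$), which is exactly where the hypothesis $\sigma^{n}(\omega)\neq 0^{\infty}$ is invoked.
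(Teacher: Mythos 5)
The paper gives no proof of this statement—it is quoted directly from Parry \cite{P1960}—so there is nothing internal to compare against; judged on its own, your argument is correct and is essentially the classical one: monotonicity of the itinerary map $\tau^{-}_{\beta,0}$ together with $(T^{-}_{\beta,0})^{m}(1)\leq 1$ for the forward direction, and for the converse the choice of $\beta$ as the root of $\pi_{\beta}(\omega)=1$ followed by the verification that the $T^{-}_{\beta,0}$-orbit of $1$ reproduces the digits of $\omega$. Your self-improving estimate for $\pi_{\beta}(\sigma^{k}\omega)\leq 1$ is sound: the identity $\pi_{\beta}(\sigma^{k+j}\omega)-1=\beta^{j}\bigl(\pi_{\beta}(\sigma^{k}\omega)-1\bigr)+\pi_{\beta}(\sigma^{j}\omega)$ checks out, the strict positivity of the excess guarantees $\sigma^{k+j}\omega\prec\omega$ so the step can be iterated, and the resulting geometric growth contradicts the uniform bound $\pi_{\beta}(\xi)\leq 1/(\beta-1)$. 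You are also right to flag $\omega=1^{\infty}$: it satisfies $\sigma^{m}(\omega)\preceq\omega$ and the non-degeneracy hypothesis, yet $\tau^{-}_{\beta,0}(1)=1^{\infty}$ forces $\beta=2$, so the theorem as stated in the paper is literally false at this boundary case; excluding it, as you do, is the correct reading of Parry's result.
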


Combining this result with \Cref{thm:Structure}, we obtain the following which will be utilised in our proof of \Cref{thm:main}.

    \begin{corollary}\label{cor:From_greedy_to_intermediate}
        Given $(\beta, \alpha) \in \Delta$, there exists $\beta' \in (1,2)$ such that $\tau_{\beta,\alpha}^{-}(1) = \tau_{\beta',0}^{-}(1)$.
    \end{corollary}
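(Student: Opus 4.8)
The plan is to recognise $\omega := \tau^{-}_{\beta,\alpha}(1)$ as a word satisfying the two hypotheses of Parry's converse theorem (\Cref{thm:Parry_converse}) — namely $\sigma^{m}(\omega) \preceq \omega$ for all $m \in \mathbb{N}$, and $\sigma^{n}(\omega) \neq 0^{\infty}$ for all $n \in \mathbb{N}$ — and then simply read off the promised $\beta'$. Throughout I would work with the orbit description of $\tau^{-}_{\beta,\alpha}$ via the map $T^{-}_{\beta,\alpha}$ (the equivalent definition recalled in \Cref{sec:beta-shifts}), so that the $n$-th letter of $\tau^{-}_{\beta,\alpha}(x)$ is $0$ exactly when $(T^{-}_{\beta,\alpha})^{n-1}(x) \le p := p_{\beta,\alpha}$. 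Two elementary facts will be used: the cocycle identity $\sigma \circ \tau^{-}_{\beta,\alpha} = \tau^{-}_{\beta,\alpha} \circ T^{-}_{\beta,\alpha}$, which is immediate from this description, and the monotonicity of $x \mapsto \tau^{-}_{\beta,\alpha}(x)$ with respect to the lexicographic order, a standard consequence of the two branches of $T^{-}_{\beta,\alpha}$ being increasing.

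First I would check that the forward orbit of $1$ under $T^{-}_{\beta,\alpha}$ never leaves $(0,1]$: on $(0,p]$ the map sends $x$ to $\beta x + \alpha \in (\alpha, \beta p + \alpha] = (\alpha, 1]$, and on $(p,1]$ it sends $x$ to $\beta x + \alpha - 1 \in (0, \beta + \alpha - 1] \subseteq (0,1]$ since $\beta + \alpha \le 2$; hence $T^{-}_{\beta,\alpha}\big((0,1]\big) \subseteq (0,1]$ and inductively $(T^{-}_{\beta,\alpha})^{n}(1) \in (0,1]$ for every $n \in \mathbb{N}_{0}$. The first hypothesis of \Cref{thm:Parry_converse} is then immediate from the cocycle identity and monotonicity: for each $m \in \mathbb{N}$,
\[
    \sigma^{m}(\omega) = \tau^{-}_{\beta,\alpha}\big((T^{-}_{\beta,\alpha})^{m}(1)\big) \preceq \tau^{-}_{\beta,\alpha}(1) = \omega ,
\]
using $(T^{-}_{\beta,\alpha})^{m}(1) \le 1$. (Equivalently, this is contained in \Cref{thm:Structure}, since $\omega \in \Omega^{-}_{\beta,\alpha}$.)

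The remaining, and slightly more delicate, point — the one I would spell out most carefully — is the non-degeneracy hypothesis $\sigma^{n}(\omega) \neq 0^{\infty}$. By the cocycle identity and the previous step, $\sigma^{n}(\omega) = \tau^{-}_{\beta,\alpha}(y)$ with $y := (T^{-}_{\beta,\alpha})^{n}(1) \in (0,1]$, so it is enough to show $\tau^{-}_{\beta,\alpha}(y) \neq 0^{\infty}$ for every $y \in (0,1]$. Were $\tau^{-}_{\beta,\alpha}(y) = 0^{\infty}$, one would have $(T^{-}_{\beta,\alpha})^{k}(y) \le p$ for all $k \ge 0$, so $T^{-}_{\beta,\alpha}$ acts as $x \mapsto \beta x + \alpha$ along the whole orbit and $(T^{-}_{\beta,\alpha})^{k}(y) = \beta^{k} y + \alpha(1 + \beta + \cdots + \beta^{k-1}) \ge \beta^{k} y \to \infty$ as $k \to \infty$, contradicting $(T^{-}_{\beta,\alpha})^{k}(y) \le p \le 1$. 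Having verified both hypotheses, \Cref{thm:Parry_converse} produces $\beta' \in (1,2)$ with $\omega = \tau^{-}_{\beta',0}(1)$, that is $\tau^{-}_{\beta,\alpha}(1) = \tau^{-}_{\beta',0}(1)$, as required. I do not expect a genuine obstacle beyond the non-degeneracy check above; the only additional care needed is the routine branch bookkeeping for $T^{-}_{\beta,\alpha}$ (in particular at $x = p$ and at the endpoints $1$ and $2 - \beta$) and a clean statement of the two structural facts used.
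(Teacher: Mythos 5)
Your argument is correct and is essentially the route the paper intends: the paper obtains this corollary simply by ``combining'' Parry's converse (\Cref{thm:Parry_converse}) with \Cref{thm:Structure}, and you verify precisely the two hypotheses of \Cref{thm:Parry_converse} — the condition $\sigma^{m}(\omega)\preceq\omega$ via the commuting diagram and monotonicity (equivalently, via \Cref{thm:Structure}), and the non-degeneracy condition $\sigma^{n}(\omega)\neq 0^{\infty}$, which the paper leaves implicit, via the expansion estimate $(T^{-}_{\beta,\alpha})^{k}(y)\geq\beta^{k}y\to\infty$. The only caveat, inherited from the paper's own formulation of \Cref{thm:Parry_converse} rather than from your reasoning, is the boundary case $\alpha=2-\beta$, where $\tau^{-}_{\beta,\alpha}(1)=1^{\infty}$ and the conclusion would force $\beta'=2\notin(1,2)$.
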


In the sequel we will also make use of the projection $\pi_{\beta, \alpha} \colon \{ 0, 1 \}^{\mathbb{N}} \to [0, 1]$ defined by
    \begin{align*}
        \pi_{\beta, \alpha}(\omega_{1} \omega_{2} \cdots) \coloneqq \frac{\alpha}{1 - \beta} + \sum_{k \in \mathbb{N}} \frac{\omega_{k}}{\beta^k}.
    \end{align*}
We note that $\pi_{\beta, \alpha}$ is linked to the iterated function systems $( [0, 1]; f_{0} \colon x \mapsto \beta^{-1}x, f_{1} \colon x \mapsto \beta^{-1}(x+1)$ via the equality
    \begin{align*}
        \pi_{\beta, \alpha}(\omega_{1} \omega_{2} \cdots)
        = \alpha / (1-\beta) + \lim_{n \to \infty} f_{\omega_{1}} \circ \cdots \circ f_{\omega_{n}}([0, 1]),
    \end{align*}
and the iterated function system $( [0, 1]; f_{\beta, \alpha, 0} \colon x \mapsto \beta^{-1}x - \alpha\beta^{-1}, f_{\beta, \alpha, 1} \colon x \mapsto \beta^{-1}x - (\alpha-1)\beta^{-1}$ via the equality
    \begin{align}\label{eq:alt_IFS}
        \pi_{\beta, \alpha}(\omega_{1} \omega_{2} \cdots)
        =\lim_{n \to \infty} f_{\beta,\alpha,\omega_{1}} \circ \cdots \circ f_{\beta,\alpha,\omega_{n}}([0, 1]).
    \end{align}
We refer the reader to \cite{F:1990} for further details on iterated function systems. An important property of $\pi_{\beta, \alpha}$ is that the following diagrams commute.
	\begin{align}\label{eq:commutative_diag}
	    \begin{aligned}
            \xymatrix@C+2pc{
            \Omega^{+}_{\beta, \alpha}
            \ar@/_/[d]_{\pi_{\beta, \alpha}}
            \ar[r]^{\sigma} & 
            \Omega_{\beta, \alpha}^{+}
            \ar@/^/[d]^{\pi_{\beta, \alpha}} \\
            {[0, 1)}
            \ar@/_/[u]_{\tau_{\beta,\alpha}^{+}}
            \ar[r]_{T_{\beta, \alpha}}  & 
            \ar@/^/[u]^{\tau_{\beta,\alpha}^{+}}
            [0, 1)}
	    \end{aligned}
	    \qquad\qquad
	    \begin{aligned}
            \xymatrix@C+2pc{
            \Omega^{-}_{\beta, \alpha}
            \ar@/_/[d]_{\pi_{\beta, \alpha}}
            \ar[r]^{\sigma} & 
            \Omega_{\beta, \alpha}^{-}
            \ar@/^/[d]^{\pi_{\beta, \alpha}} \\
            {(0, 1]}
            \ar@/_/[u]_{\tau_{\beta,\alpha}^{-}}
            \ar[r]_{T^{-}_{\beta, \alpha}}  & 
            \ar@/^/[u]^{\tau_{\beta,\alpha}^{-}}
            (0, 1]}
	    \end{aligned}
	\end{align}

This result is verifiable from the definitions of the involved maps and a sketch of a proof can be found in \cite{BHV:2011}.  It also yields that each $\omega \in \Omega_{\beta, \alpha}$ is a $\beta$-expansion, and corresponds to the unique point 
    \begin{align*}
    \sum_{k \in \mathbb{N}} \frac{\omega_{k}}{\beta^{k}} = \pi_{\beta,\alpha}(\omega) - \frac{\alpha}{1-\beta}
    \end{align*}
in $[\alpha/(\beta-1), 1+\alpha/(\beta-1)]$. A particular expansion which we will make use of is $\tau_{\beta,0}^{-}(1)$ which is referred to as the \textsl{quasi-greedy $\beta$-expansion of $1$}.

The commutativity of the diagrams given in \eqref{eq:commutative_diag} also implies that the dynamical systems $(\Omega_{\beta, \alpha}, \sigma)$ and $([0, 1), T_{\beta, \alpha})$ are topologically conjugate.  This in tandem with \Cref{thm:Structure}, yields that the upper and lower kneading invariants completely determine the dynamics of $T_{\beta, \alpha}$. Additionally, we have the following.

    \begin{theorem}[{\cite{GH,BHV:2011}}]\label{thm:Laurent}
        Let $(\beta, \alpha) \in \Delta$ be fixed. If $\omega = \omega_{1}\omega_{2} \cdots $ and $\nu = \nu_{1} \nu_{2} \cdots $, respectively, denote the upper and lower kneading invariants of $\Omega_{\beta, \alpha}$, then $\beta$ is the maximal real root of the Laurent series 
            \begin{align*}
                \pi_{z,\alpha}(\omega) - \pi_{z,\alpha}(\nu)
                = \sum_{k \in \mathbb{N}} (\omega_{k} - \nu_{k})\,z^{-k}.
            \end{align*}
\end{theorem}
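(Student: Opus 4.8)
The plan is to prove two things separately: ($\star$) that $\beta$ is a root of the displayed series, and ($\star\star$) that no real number larger than $\beta$ is a root. Part ($\star$) is short. Write $p = p_{\beta,\alpha} = (1-\alpha)/\beta \in (0,1)$, so that $\omega = \tau^{+}_{\beta,\alpha}(p)$ and $\nu = \tau^{-}_{\beta,\alpha}(p)$. Commutativity of the two diagrams in \eqref{eq:commutative_diag} gives $\pi_{\beta,\alpha}(\omega) = p = \pi_{\beta,\alpha}(\nu)$, and subtracting cancels the summand $\alpha/(1-\beta)$, leaving $\sum_{k}(\omega_k-\nu_k)\beta^{-k} = \pi_{\beta,\alpha}(\omega) - \pi_{\beta,\alpha}(\nu) = 0$. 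Since $\omega_1 = 1$ and $\nu_1 = 0$, the series behaves like $z^{-1}$ as $z\to\infty$ and is in particular positive for all large real $z$, so ($\star\star$) is exactly what is needed to pin $\beta$ down as the maximal real root.

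For ($\star\star$) set $g(z) = \sum_{k}(\omega_k-\nu_k)z^{-k}$; I would in fact show $g(z) > 0$ for every real $z > \beta$. A first reduction: since $T_{\beta,\alpha}(p) = 0$ and $T^{-}_{\beta,\alpha}(p) = 1$, commutativity of \eqref{eq:commutative_diag} forces $\omega = 1\zeta$ and $\nu = 0\eta$ with $\zeta = \tau^{+}_{\beta,\alpha}(0)$ and $\eta = \tau^{-}_{\beta,\alpha}(1)$, so $z\,g(z) = 1 + \sum_{k}(\zeta_k-\eta_k)z^{-k}$ and, writing $x = z^{-1}$, the claim becomes that $\beta^{-1}$ is the least positive zero of $D(x) := 1 - \sum_{k}(\eta_k-\zeta_k)x^{k}$, while ($\star$) says $D(\beta^{-1}) = 0$. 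The range $z \ge 2$ is elementary: the first index where $\omega$ and $\nu$ disagree is $1$, with $\omega_1-\nu_1 = 1$, so $g(z) \ge z^{-1}\bigl(1-(z-1)^{-1}\bigr) \ge 0$; equality at $z = 2$ would force $\omega = 10^{\infty}$ and $\nu = 01^{\infty}$, whence $\pi_{\beta,\alpha}(\omega) = \pi_{\beta,\alpha}(\nu) = p$ gives first $\alpha = 0$ and then that the quasi-greedy expansion of $1$ in base $\beta$ is $1^{\infty}$, i.e.\ $\beta = 2$, contradicting $(\beta,\alpha)\in\Delta$.

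The genuinely hard case is $z \in (\beta, 2)$, and no crude estimate will reach it: the coefficients $\omega_k-\nu_k$ change sign, $g$ is not monotone, and already for $\alpha = 0$ and $\beta$ the golden mean — where $g(z) = (z^{2}-z-1)/(z(z^{2}-1))$ — the positivity of $g$ to the right of $\beta$ reflects $g$ having a single zero in $(1,\infty)$, not any elementary inequality. The plan is to bring in the growth rate of cylinder counts. By \Cref{thm:Structure} the sequence $\bigl(\lvert\Omega^{+}_{\beta,\alpha}\rvert_{n}\bigr)_{n}$ satisfies a linear recurrence, and one establishes — the substantive combinatorial step, essentially the content of \cite{BHV:2011,GH} — an identity $\sum_{n\ge 0}\lvert\Omega^{+}_{\beta,\alpha}\rvert_{n}x^{n} = N(x)/D(x)$ with $D$ as above and $N$ holomorphic and zero-free on a neighbourhood of $[0,\beta^{-1}]$; this is closely related to $g$ being, up to a factor with no zero in $(1,\infty)$, the Milnor--Thurston kneading determinant of the Lorenz-type map $T_{\beta,\alpha}$. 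Granting this: if $D$ vanished at some $x_{0}\in(0,\beta^{-1})$, then $N/D$ would have a pole at $x_{0}$, so $\sum_{n}\lvert\Omega^{+}_{\beta,\alpha}\rvert_{n}x^{n}$ would have radius of convergence at most $x_{0} < \beta^{-1}$; but this radius is exactly $\beta^{-1}$, since $\lvert\Omega^{+}_{\beta,\alpha}\rvert_{n}$ has exponential growth rate $\beta$ (the topological entropy of $\sigma\vert_{\Omega_{\beta,\alpha}}$ being $\log\beta$) — a contradiction. Hence $\beta^{-1}$ is the least positive zero of $D$, which is ($\star\star$). The main obstacle is therefore the generating-function identity together with the zero-freeness of $N$; I would organise that argument using the reflection conjugacy $x\mapsto 1-x$, under which the digit-complement of $\zeta$ equals $\tau^{-}_{\beta,\,2-\beta-\alpha}(1)$, so that both $\eta$ and that complement are quasi-greedy expansions of $1$ and \Cref{thm:Parry_converse} together with \Cref{thm:Structure} control their admissibility.
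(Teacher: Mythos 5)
First, a remark on the comparison itself: the paper does not prove \Cref{thm:Laurent} --- it is quoted from \cite{GH,BHV:2011} --- so there is no internal proof to measure your argument against, and I assess it on its own terms. Your part ($\star$) is correct and complete: commutativity of \eqref{eq:commutative_diag} gives $\pi_{\beta,\alpha}(\omega)=\pi_{\beta,\alpha}(\nu)=p_{\beta,\alpha}$, the $\alpha/(1-\beta)$ terms cancel, and $\beta$ is a root. The reduction $\omega=1\,\tau^{+}_{\beta,\alpha}(0)$, $\nu=0\,\tau^{-}_{\beta,\alpha}(1)$ and the elementary treatment of $z\geq 2$ are also sound (though the latter is redundant once the main argument is in place, since $z\geq 2>\beta$ corresponds to $x=z^{-1}\leq 1/2<\beta^{-1}$).

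The genuine gap is in the range $z\in(\beta,2)$, which is where the entire content of the theorem lives. You reduce maximality to the identity $\sum_{n\geq 0}\lvert \Omega^{+}_{\beta,\alpha}\vert_{n}\rvert\,x^{n}=N(x)/D(x)$ with $N$ holomorphic and \emph{zero-free} on a neighbourhood of $[0,\beta^{-1}]$, and you explicitly defer this to \cite{BHV:2011,GH} --- but those are precisely the sources of \Cref{thm:Laurent}, so the lemma you propose to ``grant'' is the theorem in disguise and the argument is circular. Two points make the deferred step substantive rather than routine. First, the Milnor--Thurston-type identity relating the cylinder-count generating function to the kneading determinant of a discontinuous Lorenz-type map must actually be derived; it does not follow formally from \Cref{thm:Structure}, which only gives the lexicographic description of $\Omega^{\pm}_{\beta,\alpha}$. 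Second, and more seriously, zero-freeness of $N$ on $[0,\beta^{-1}]$ is exactly the delicate point: in Milnor--Thurston theory the numerator of such a quotient can vanish and cancel zeros of the kneading determinant inside the disc of radius $e^{-h_{\operatorname{top}}}$, and ruling this out for intermediate $\beta$-transformations needs a genuine combinatorial or dynamical input (for instance the admissibility conditions of \Cref{thm:BSV14}, or a transitivity/renormalisation analysis in the spirit of \Cref{thm:G1990+Palmer79}). The reflection conjugacy you invoke controls admissibility of the complemented sequences but produces neither the identity nor the zero-freeness. As written, the proposal establishes that $\beta$ is \emph{a} root and that there is no root in $[2,\infty)$, but not that $\beta$ is the maximal real root.
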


The above allows us to transfer results on the dynamical system $(\Omega_{\beta, \alpha}, \sigma)$ to $([0, 1], T_{\beta, \alpha})$ and vice versa.  We will utilise this in the proofs of our main results, and thus will make use of the following symbolic representations of $K_{\beta, \alpha}^{+}(t)$ and $E_{\beta, \alpha}^{+}$ defined in \Cref{sec:intro}.  For $(\beta, \alpha) \in \Delta$ and $t \in [0,1]$, we set
    \begin{align*}
        \mathcal{K}^{+}_{\beta,\alpha}(t) &\coloneqq \{ \omega \in \{0,1\}^\mathbb{N} \colon \tau_{\beta,\alpha}^{+}(t) \preceq \sigma^{n}(\omega) \prec \tau_{\beta,\alpha}^{-}(1) \; \text{for all} \; n \in \mathbb{N}_{0} \}
    \intertext{and we let}
        \mathcal{E}_{\beta,\alpha}^{+} &\coloneqq \{ \omega \in \{0,1\}^\mathbb{N} \colon \omega \preceq \sigma^{n}(\omega) \prec \tau_{\beta,\alpha}^{-}(1) \; \text{for all} \; n \in \mathbb{N}_{0} \}.
    \end{align*}
In addition to this, we will utilise the following Ledrappier-Young formula due to Raith \cite{R}.  For $t \in (0,1]$,
    \begin{align}\label{eq:entopen}
        \dim_{H}(K_{\beta,\alpha}(t)) = \frac{h_{\operatorname{top}}(T_{\beta,\alpha}\vert_{K_{\beta,\alpha}(t)})}{\log(\beta)}.
    \end{align}
where $K_{\beta,\alpha}(t) \coloneqq \{ x \in[0,1] \colon T_{\beta,\alpha}^{n}(x)\not \in (0,t) \; \text{for all} \; n \in \mathbb{N}_{0} \}$ and where $h_{\operatorname{top}}(T_{\beta,\alpha}\vert_{K^{+}_{\beta,\alpha}(t)})$ denotes the topological entropy of the dynamical system $(K^{+}_{\beta,\alpha}(t), T_{\beta,\alpha}\vert_{K^{+}_{\beta,\alpha}(t)})$. Here, for a given subset $L \subseteq [0, 1]$ we set
    \begin{align*} 
        h_{\operatorname{top}}(T_{\beta,\alpha}\vert_{L}) \coloneqq \lim_{n \to \infty} \frac{\log (\lvert \tau_{\beta,\alpha}^{+}(L)\vert_{n}\rvert)}{n} 
    \end{align*}
see \cite{LM,brin_stuck_2002,Walters_1982}, and reference therein, for further details on topological entropy.  More specifically, in \Cref{sec:proof_cor_1_3_4}, we apply the following result, which is a consequence of Raith's Ledrappier-Young formula \eqref{eq:entopen} and \cite[Proposition 2.6]{KKLL}.

\begin{proposition}\label{prop:ent}
For $(\alpha, \beta) \in \Delta$ and $t \in [0,1]$,
    \begin{align}\label{eq:entropy}
        h_{\operatorname{top}}(T_{\beta,\alpha}\vert_{K_{\beta,\alpha}(t)}) = h_{\operatorname{top}}(T_{\beta,\alpha}\vert_{K^{+}_{\beta,\alpha}(t)})
    \end{align}
and hence 
    \begin{align}\label{eq:ent}
        \dim_{H}(K^{+}_{\beta,\alpha}(t)) = \frac{h_{\operatorname{top}}(T_{\beta,\alpha}\vert_{K^{+}_{\beta,\alpha}(t)})}{\log(\beta)}.
    \end{align}    
\end{proposition}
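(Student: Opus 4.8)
The strategy is to derive both identities from Raith's Ledrappier--Young formula \eqref{eq:entopen} together with a comparison of the two survivor sets obtained by deleting, respectively, the open hole $(0,t)$ and the half-open hole $[0,t)$; this comparison is the content of \cite[Proposition~2.6]{KKLL}, which I would transcribe from the greedy case $\alpha=0$ to the intermediate setting.

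\textbf{Reduction of \eqref{eq:ent} to \eqref{eq:entropy}.} A point whose orbit avoids $[0,t)$ a fortiori avoids $(0,t)$, and $[0,1)\subseteq[0,1]$, so $K^{+}_{\beta,\alpha}(t)\subseteq K_{\beta,\alpha}(t)$. Conversely, if $x\in K_{\beta,\alpha}(t)\setminus K^{+}_{\beta,\alpha}(t)$, then either $x=1$ or $T_{\beta,\alpha}^{n}(x)\in[0,t)\setminus(0,t)=\{0\}$ for some $n\in\mathbb{N}_{0}$. As $T_{\beta,\alpha}^{n}$ is piecewise linear with finitely many (namely at most $2^{n}$) branches, each of nonzero slope, the set $\{x\colon T_{\beta,\alpha}^{n}(x)=0\}$ is finite, so $K_{\beta,\alpha}(t)\setminus K^{+}_{\beta,\alpha}(t)$ lies in a countable set; since a countable set has Hausdorff dimension zero, $\dim_{H}(K^{+}_{\beta,\alpha}(t))=\dim_{H}(K_{\beta,\alpha}(t))$. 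Chaining this with \eqref{eq:entopen} and \eqref{eq:entropy} yields $\dim_{H}(K^{+}_{\beta,\alpha}(t))=h_{\operatorname{top}}(T_{\beta,\alpha}\vert_{K_{\beta,\alpha}(t)})/\log(\beta)=h_{\operatorname{top}}(T_{\beta,\alpha}\vert_{K^{+}_{\beta,\alpha}(t)})/\log(\beta)$, which is \eqref{eq:ent}; the case $t=0$ is trivial since then $K^{+}_{\beta,\alpha}(0)=[0,1)$.

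\textbf{Proof of \eqref{eq:entropy}.} The map $L\mapsto h_{\operatorname{top}}(T_{\beta,\alpha}\vert_{L})=\lim_{n\to\infty}\tfrac1n\log\lvert\tau_{\beta,\alpha}^{+}(L)\vert_{n}\rvert$ is monotone under inclusion, so $K^{+}_{\beta,\alpha}(t)\subseteq K_{\beta,\alpha}(t)$ gives $h_{\operatorname{top}}(T_{\beta,\alpha}\vert_{K^{+}_{\beta,\alpha}(t)})\le h_{\operatorname{top}}(T_{\beta,\alpha}\vert_{K_{\beta,\alpha}(t)})$. For the reverse inequality I would pass to the symbolic models. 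By \Cref{thm:Structure} and the commuting diagrams \eqref{eq:commutative_diag} one has $\tau_{\beta,\alpha}^{+}(K^{+}_{\beta,\alpha}(t))=\mathcal{K}^{+}_{\beta,\alpha}(t)$, while $\tau_{\beta,\alpha}^{+}(K_{\beta,\alpha}(t))$ is obtained from $\mathcal{K}^{+}_{\beta,\alpha}(t)$ by additionally admitting those codings $\omega$ with $\sigma^{j}(\omega)=\tau_{\beta,\alpha}^{+}(0)$ for some $j\in\mathbb{N}_{0}$ — the codings of orbits that eventually hit $0$ — together with at most the single coding $\tau_{\beta,\alpha}^{+}(1)$. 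Since $\tau_{\beta,\alpha}^{+}(0)$ is a fixed sequence, every such extra coding has the form $w\,\tau_{\beta,\alpha}^{+}(0)$ with $w$ finite, and membership in $\tau_{\beta,\alpha}^{+}(K_{\beta,\alpha}(t))$ forces the orbit coded by $w$ to stay in $\{0\}\cup[t,1]$ and to reach $0$ only via the branch point $p_{\beta,\alpha}$. Following the argument of \cite[Proposition~2.6]{KKLL} — which is local to the hole at $0$ and makes no essential use of $\alpha=0$ — one shows that this countable family of extra codings has growth rate at most $h_{\operatorname{top}}(T_{\beta,\alpha}\vert_{K^{+}_{\beta,\alpha}(t)})$, whence $h_{\operatorname{top}}(T_{\beta,\alpha}\vert_{K_{\beta,\alpha}(t)})\le h_{\operatorname{top}}(T_{\beta,\alpha}\vert_{K^{+}_{\beta,\alpha}(t)})$ and \eqref{eq:entropy} follows.

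\textbf{Main obstacle.} The delicate point is this last combinatorial step; for $\alpha=0$ it is precisely \cite[Proposition~2.6]{KKLL}. One cannot simply delete the block of $w$ coding the descent $p_{\beta,\alpha}\to 0\to\cdots$, because an orbit segment contained in $[t,1)$ need not extend to an orbit remaining in $[t,1)$ for all time, so a prefix of $w\,\tau_{\beta,\alpha}^{+}(0)$ need not be a prefix of an element of $\mathcal{K}^{+}_{\beta,\alpha}(t)$. The resolution exploits that the forbidden region of $K^{+}_{\beta,\alpha}(t)$ exceeds that of $K_{\beta,\alpha}(t)$ only by the single point $0$, which lies in the closure of the surviving set: one compares with the survivor sets $K^{+}_{\beta,\alpha}(t')$ for $t'<t$, where $K_{\beta,\alpha}(t)\setminus N\subseteq K^{+}_{\beta,\alpha}(t')$ for the countable exceptional set $N$ above, and then lets $t'\uparrow t$, using $K^{+}_{\beta,\alpha}(t')\downarrow K^{+}_{\beta,\alpha}(t)$ and the continuity at $t$ of $t'\mapsto h_{\operatorname{top}}(T_{\beta,\alpha}\vert_{K^{+}_{\beta,\alpha}(t')})$ — a property of this particular family established in \cite{KKLL}. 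In the intermediate regime the only change is that after the visit to $0$ the coding continues with the fixed sequence $\tau_{\beta,\alpha}^{+}(0)$ (the symbol $0$ followed by the coding of the orbit of $\alpha$) rather than being eventually constant, and the \cite{KKLL} argument carries over unchanged.
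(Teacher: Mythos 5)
Your reduction of \eqref{eq:ent} to \eqref{eq:entropy}, and the easy inequality $h_{\operatorname{top}}(T_{\beta,\alpha}\vert_{K^{+}_{\beta,\alpha}(t)})\leq h_{\operatorname{top}}(T_{\beta,\alpha}\vert_{K_{\beta,\alpha}(t)})$ by monotonicity, match the paper. You also correctly isolate the crux: the extra codings $w\,\tau^{+}_{\beta,\alpha}(0)$ contribute to the prefix count of $\tau^{+}_{\beta,\alpha}(K_{\beta,\alpha}(t))$, and a prefix of such a coding need not be a prefix of an element of $\mathcal{K}^{+}_{\beta,\alpha}(t)$. But the resolution you sketch does not close this gap. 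First, the inclusion you invoke, $K_{\beta,\alpha}(t)\setminus N\subseteq K^{+}_{\beta,\alpha}(t')$ for $t'<t$, is just $K^{+}_{\beta,\alpha}(t)\subseteq K^{+}_{\beta,\alpha}(t')$ (a point of $K_{\beta,\alpha}(t)$ whose orbit never hits $0$ already lies in $K^{+}_{\beta,\alpha}(t)$), so it says nothing about the prefixes contributed by $N$ — which is exactly the quantity that needs to be controlled, since the entropy here is defined by counting \emph{all} length-$n$ prefixes of the coding set, and deleting a countable set of codings can a priori change that count. Second, the continuity of $t'\mapsto h_{\operatorname{top}}(T_{\beta,\alpha}\vert_{K^{+}_{\beta,\alpha}(t')})$ is established in \cite{KKLL} only for $\alpha=0$; for intermediate $(\beta,\alpha)$ it is a consequence of this very proposition (via \Cref{prop:char} and \Cref{Cor_3}), so using it here is circular.

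What is actually needed, and what the paper supplies, is a direct counting estimate. After first reducing to $t\in E^{+}_{\beta,\alpha}\setminus\{0\}$ with $0\in K_{\beta,\alpha}(t)$ (a reduction you omit, but which is essential: it guarantees $\tau^{+}_{\beta,\alpha}(t)\in\mathcal{K}^{+}_{\beta,\alpha}(t)$ and $\sigma^{n}(\tau^{+}_{\beta,\alpha}(t))\neq\tau^{+}_{\beta,\alpha}(0)$), one performs surgery on each extra coding $\omega\in\mathcal{K}^{0}_{\beta,\alpha}(t)$: flip the letter at the first time $j$ at which $\sigma^{j}(\omega)=\tau^{+}_{\beta,\alpha}(0)$, truncate the resulting word $\nu$ at the first index $i^{*}$ where $\sigma^{i^{*}}(\nu)$ coincides with the corresponding prefix of $\tau^{+}_{\beta,\alpha}(t)$, and append $\tau^{+}_{\beta,\alpha}(t)$. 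The result lies in $\mathcal{K}^{+}_{\beta,\alpha}(t)$ and determines the original length-$k$ prefix up to bounded ambiguity, yielding $\lvert\mathcal{K}^{0}_{\beta,\alpha}(t)\vert_{k}\rvert\leq 3(k+1)\lvert\mathcal{K}^{+}_{\beta,\alpha}(t)\vert_{k}\rvert$; the polynomial factor is harmless for the exponential growth rate. Your appeal to \cite[Proposition~2.6]{KKLL} points at the right source, but since the adaptation to $\alpha\neq 0$ (where the tail $\tau^{+}_{\beta,\alpha}(0)$ is no longer $0^{\infty}$) is the substance of the claim, the argument has to be carried out rather than asserted, and your substitute limiting argument does not do the job.
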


\begin{proof}
Since the set $K_{\beta,\alpha}(t)\backslash K^{+}_{\beta,\alpha}(t)$ is countable, we have $\dim_H(K_{\beta,\alpha}(t))=\dim_H(K^{+}_{\beta,\alpha}(t))$. The Ledrappier-Young formula given in \eqref{eq:ent} is therefore a direct consequence of \eqref{eq:entopen} and \eqref{eq:entropy}. The proof of \eqref{eq:entropy}, follows from a small adaptation of the proof of \cite[Proposition 2.6]{KKLL}, which we present below.

If $0 \not\in K_{\beta,\alpha}(t)$ or $t=0$, then $\mathcal{K}^{+}_{\beta,\alpha}(t) =\mathcal{K}_{\beta,\alpha}(t)$, and if $t\not \in E^{+}_{\beta,\alpha}$ then there exists a $t^*>t$ with $K^{+}_{\beta,\alpha}(t)=K^{+}_{\beta,\alpha}(t^*)$.  The first of these two statements follows directly from the definition of $K_{\beta,\alpha}(t)$ and $K_{\beta,\alpha}^{+}(t)$, and the second can be seen to hold as follows.  For every $t \not\in E^{+}_{\beta,\alpha}$ there exists a smallest natural number $N$ such that $T_{\beta,\alpha}^N(t) \in [0,t)$. Let $\delta = \min \{ p_{\beta, \alpha} - T_{\beta,\alpha}^{n}(t) \colon n \in \{ 0, 1, \dots, N-1 \} \; \text{and} \; T_{\beta,\alpha}^{n}(t) < p_{\beta, \alpha} \}$ and set $\varepsilon = \min\{t-T_{\beta,\alpha}^N(t), \delta\}/\beta^{N}$. By construction, for all $s\in (t,t+\varepsilon)$, we have that $T_{\beta,\alpha}^N(s)\in [0,t) \subset [0, s)$, and hence that $s\not\in K^{+}_{\beta,\alpha}(s)$ and $s\not\in K^{+}_{\beta,\alpha}(t) $. This implies that $K^{+}_{\beta,\alpha}(t)= K^{+}_{\beta,\alpha}(s)$ for all $s\in (t,t+\varepsilon)$.  In fact letting $t^{*} = \inf \{ s \in E^{+}_{\beta,\alpha} \colon s > t \}$, a similar justification yields that $K^{+}_{\beta,\alpha}(t)= K^{+}_{\beta,\alpha}(s)$ for all $s \in (t,t^*)$.
All-in-all, this all implies that $t^* \in E^{+}_{\beta,\alpha}$.

Therefore, it suffices to show that, if $t \in E^{+}_{\beta,\alpha} \setminus \{0\}$ and $0\in K_{\beta,\alpha}(t)$, then   $h_{\operatorname{top}}(T_{\beta,\alpha}\vert_{K_{\beta,\alpha}(t)})= h_{\operatorname{top}}(T_{\beta,\alpha}\vert_{K^{+}_{\beta,\alpha}(t)})$. To this end, suppose that $t\in E^{+}_{\beta,\alpha} \setminus \{0\}$.  In which case $\sigma^{n}(\tau^{+}_{\beta,\alpha}(t)) \neq \tau^{+}_{\beta,\alpha}(0)$, and so setting
    \begin{align*}
        \mathcal{K}_{\beta,\alpha}^{0}(t) &\coloneqq \{ \omega \in \{0,1\}^{\mathbb{N}} \colon \text{there exists} \; m \in \mathbb{N}_{0} \; \text{with} \; \sigma^{m}(\omega) = \tau^{+}_{\beta,\alpha}(0) \; \text{and} \; \tau^{+}_{\beta,\alpha}(t) \prec \sigma^{n}(\omega) \prec \tau^{-}_{\beta,\alpha}(1) \; \text{for all} \; n \in \mathbb{N}_{0} \}
    \end{align*}
and letting $\mathcal{K}_{\beta,\alpha}(t)$ denote the symbolic representation of $K_{\beta,\alpha}(t)$, namely letting
    \begin{align*}
        \mathcal{K}_{\beta,\alpha}(t) &\coloneqq \{ \omega \in \{0,1\}^{\mathbb{N}} \colon \sigma^{n}(\omega) = \tau^{+}_{\beta,\alpha}(0) \; \text{or} \; \tau^{+}_{\beta,\alpha}(t) \preceq \sigma^{n}(\omega) \prec \tau^{-}_{\beta,\alpha}(1) \; \text{for all} \; n \in \mathbb{N}_{0} \},
    \end{align*}
we have that $\mathcal{K}_{\beta,\alpha}(t)\setminus \mathcal{K}_{\beta,\alpha}^{+}(t) = \mathcal{K}_{\beta,\alpha}^{0}(t)$. Let $k \in \mathbb{N}$ be fixed, and let $\zeta \in \mathcal{K}_{\beta,\alpha}^{0}(t)\vert_{k}$ with $\zeta \neq \tau^{+}_{\beta,\alpha}(0)\vert_{k}$.  By construction, there exists $\omega \in \mathcal{K}_{\beta,\alpha}^{0}(t)$ with $\omega\vert_{k} = \zeta$.  Let $j$ be the smallest natural number such that $\sigma^{j}(\omega) = \tau^{+}_{\beta,\alpha}(0)$ and set $\nu = \omega\vert_{j-1}\xi_{j}$, where $\xi_{j}$ denotes the $j$-th letter of $\tau^{+}_{\beta,\alpha}(0)$.  Observe that 
    \begin{align}\label{eq:ent_proof}
        \tau^{+}_{\beta,\alpha}(t)\vert_{j-i} \preceq \sigma^{i}(\nu) \preceq \tau^{+}_{\beta,\alpha}(1)\vert_{j-i}
    \end{align}
for all $i \in \{0, 1, \dots, j-1\}$.  Let $i^{*} \in \{0, 1, \dots, j-1\}$ be the smallest integer such that $\sigma^{i^{*}}(\nu) = \tau^{+}_{\beta,\alpha}(t)\vert_{j-i^{*}}$, and if strict inequality holds in the lower bound of \eqref{eq:ent_proof} for all $i \in \{0, 1, \dots, j-1\}$, then set $i^{*}=j$. By the minimality of $i^{*}$, we have $\nu \, \sigma^{j-i^{*}+1}(\tau^{+}_{\beta,\alpha}(t)) = \nu\vert_{i^{*}} \tau^{+}_{\beta,\alpha}(t) \in \mathcal{K}_{\beta,\alpha}^{+}(t)$. Noting, $\nu\vert_{j-1} = \zeta\vert_{j-1}$ if $j \leq k$, and $\nu\vert_{k} = \zeta\vert_{k}$ if $j \geq k+1$, we have that
    \begin{align}\label{eq:ent_proof_2}
        \lvert \mathcal{K}_{\beta,\alpha}^{0}(t)\vert_{k} \rvert 
        \leq 1 + 2 \sum_{j = 0}^{k} \, \lvert \mathcal{K}_{\beta,\alpha}^{+}(t)\vert_{j} \rvert
        \leq 3 (k+1) \lvert \mathcal{K}_{\beta,\alpha}^{+}(t)\vert_{k} \rvert.
    \end{align}
Since $\mathcal{K}_{\beta,\alpha}(t)\setminus \mathcal{K}_{\beta,\alpha}^{+}(t) = \mathcal{K}_{\beta,\alpha}^{0}(t)$, and since by definition,
    \begin{align*}
        h_{\operatorname{top}}(T_{\beta,\alpha}\vert_{K_{\beta,\alpha}(t)})
        = \lim_{n \to \infty} \frac{\log(\mathcal{K}_{\beta,\alpha}(t)\vert_{n})}{n}
        \quad \text{and} \quad
        h_{\operatorname{top}}(T_{\beta,\alpha}\vert_{K^{+}_{\beta,\alpha}(t)})
        = \lim_{n \to \infty} \frac{\log(\mathcal{K}^{+}_{\beta,\alpha}(t)\vert_{n})}{n},
    \end{align*}
the inequality given in \eqref{eq:ent_proof_2} implies that $h_{\operatorname{top}}(T_{\beta,\alpha}\vert_{K^{+}_{\beta,\alpha}(t)}) \geq h_{\operatorname{top}}(T_{\beta,\alpha}\vert_{K_{\beta,\alpha}(t)})$. As $K^{+}_{\beta,\alpha}(t) \subseteq K_{\beta,\alpha}(t)$ and as the entropy of a subsystem cannot exceed that of its parent system, the result follows.
\end{proof}

\subsection{\texorpdfstring{$\beta$}{beta}-shifts of finite type}

In \cite{LSS16} a study of when an intermediate $\beta$-shift is of finite type was carried out. This work was continued in \cite{LSSS} where it was shown that any intermediate $\beta$-shift can be \textsl{approximated from below} by an intermediate $\beta$-shift is of finite type.  These results are summarised in the following theorem.

\begin{theorem}[{\cite{LSSS,LSS16}}]\label{thm:LSSS}
An intermediate $\beta$-shift $\Omega_{\beta,\alpha}$ is a subshift of finite type if and only if the kneading invariants $\tau_{\beta,\alpha}^{+}(p_{\beta,\alpha})$ and $\tau_{\beta,\alpha}^{-}(p_{\beta,\alpha})$ are periodic.  Moreover, given $(\beta, \alpha) \in \Delta$ and $\epsilon > 0$, there exists a $(\beta', \alpha') \in \Delta$, with $0 \leq \beta' - \beta < \epsilon$ and $\lvert \alpha - \alpha' \rvert < \epsilon$ and such that 
    \begin{enumerate}[label={\rm(\arabic*)}]
    \item $\Omega_{\beta', \alpha'}$ is a subshift of finite type,
    \item the Hausdorff distance between $\Omega_{\beta, \alpha}$ and $\Omega_{\beta', \alpha'}$ is less than $\epsilon$, and 
    \item $\Omega_{\beta, \alpha} \subseteq \Omega_{\beta', \alpha'}$.
    \end{enumerate}
\end{theorem}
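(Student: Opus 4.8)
The plan is to prove the characterisation of finite type first, and then establish the approximation statement. For the characterisation, recall from \Cref{thm:Structure} that $\Omega_{\beta,\alpha}$ is determined by the pair $(\tau^{+}_{\beta,\alpha}(p), \tau^{-}_{\beta,\alpha}(p))$. The plan is to show that the constraint set defining $\Omega_{\beta,\alpha}^{+}$ and $\Omega_{\beta,\alpha}^{-}$ can be encoded by finitely many forbidden words precisely when both kneading invariants are (eventually) periodic. The ``if'' direction is the routine one: if $\tau^{\pm}_{\beta,\alpha}(p) = u^{\infty}$ and $v^{\infty}$ for finite words $u, v$, then the lexicographic inequalities $\tau^{+}_{\beta,\alpha}(p) \preceq \sigma^{n}(\omega) \prec \tau^{-}_{\beta,\alpha}(1)$ (and the companion pair) only need to be checked on windows of bounded length, so one builds an explicit finite forbidden set $F$ with $\Omega_{\beta,\alpha} = \mathcal{X}_{F}$ and applies \cite[Theorem 2.1.8]{LM}. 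For the ``only if'' direction, one argues contrapositively: if, say, $\tau^{-}_{\beta,\alpha}(p)$ is not periodic, then one produces, for every $M$, words $\omega_{1}\cdots\omega_{n}$ and $\xi_{1}\cdots\xi_{m}$ in $\Omega^{*}$ with $n \geq M$, witnessing a violation of the finite-type splicing condition — essentially because arbitrarily long prefixes of $\tau^{-}_{\beta,\alpha}(p)$ appear as admissible words but the ``next symbol'' allowed depends on the entire prefix, not just its last $M$ letters. This is where one leans on \Cref{thm:Structure} to translate admissibility into lexicographic conditions and on the non-periodicity to keep finding new obstructions. I expect this direction is already in \cite{LSS16}, so the cleanest route is simply to cite it.

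For the approximation statement, the strategy is to perturb the kneading invariants to periodic ones while keeping control of $\beta'$, $\alpha'$, and the inclusion $\Omega_{\beta,\alpha} \subseteq \Omega_{\beta',\alpha'}$. The key input is \Cref{prop:mon_cont_kneading}: for fixed $\beta$, the maps $a \mapsto \tau^{+}_{\beta,a}(p_{\beta,a})$ and $a \mapsto \tau^{-}_{\beta,a}(p_{\beta,a})$ are strictly increasing (lexicographically), with one-sided continuity, and parts (4)--(5) describe exactly how the periodic plateaux behave. The plan is to first choose a large $m$ and then find $(\beta', \alpha')$ near $(\beta,\alpha)$ so that $\tau^{+}_{\beta',\alpha'}(p_{\beta',\alpha'})$ and $\tau^{-}_{\beta',\alpha'}(p_{\beta',\alpha'})$ agree with $\tau^{+}_{\beta,\alpha}(p_{\beta,\alpha})$ and $\tau^{-}_{\beta,\alpha}(p_{\beta,\alpha})$ on the first $m$ letters, are both periodic, and are positioned so that the new constraint intervals contain the old ones — which by \Cref{thm:Structure} gives $\Omega_{\beta,\alpha} \subseteq \Omega_{\beta',\alpha'}$. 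Concretely, one wants the upper kneading invariant to move \emph{up} and the lower to move \emph{down} (in the lexicographic sense governing the constraint) so that admissibility is only loosened; combined with \Cref{thm:BSV14} to certify that the perturbed pair is genuinely a pair of kneading invariants of some $(\beta',\alpha') \in \Delta$, and with \Cref{thm:Laurent} to recover $\beta'$ as the maximal root of the associated Laurent series (and hence check $\beta' - \beta \geq 0$ is small). Agreement of the first $m$ letters of the kneading invariants forces the Hausdorff distance between $\Omega_{\beta,\alpha}$ and $\Omega_{\beta',\alpha'}$ to be at most $2^{-m+1}$, which handles (2) once $m$ is chosen with $2^{-m+1} < \epsilon$.

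The main obstacle is the simultaneous control required in the approximation step: one must make \emph{both} kneading invariants periodic \emph{at once} while preserving both the inclusion and the bound $\beta' \geq \beta$. Truncating a single kneading invariant to a periodic word is easy (this is the classical Parry-type argument behind \Cref{thm:Parry_converse}), but the upper and lower invariants are linked — they must jointly satisfy conditions (1)--(4) of \Cref{thm:BSV14}, and a naive truncation of one may destroy admissibility of the pair or push $\beta'$ below $\beta$. The resolution I would pursue is to exploit the plateau structure in \Cref{prop:mon_cont_kneading}(4)--(5): near a point where one invariant is already periodic one has an explicit description of the nearby periodic pair, and one perturbs $\alpha$ in the direction that simultaneously raises the upper and lowers the lower constraint; if neither invariant is periodic to begin with, one first does a small perturbation to land on such a plateau. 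Verifying that the resulting $(\beta',\alpha')$ lies in $\Delta$ and that the inclusion genuinely holds — rather than just the weaker ``bounded Hausdorff distance'' — is the delicate bookkeeping, and this is presumably precisely the content imported from \cite{LSSS}; I would cite it for the full argument and reproduce only the skeleton above.
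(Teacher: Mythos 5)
This statement is not proved in the paper at all: it is quoted verbatim from \cite{LSS16} (the characterisation of finite type via periodicity of both kneading invariants) and \cite{LSSS} (the approximation from above with $\beta'\geq\beta$ and $\Omega_{\beta,\alpha}\subseteq\Omega_{\beta',\alpha'}$), so there is no internal argument to compare yours against. Your outline is consistent with how those references proceed --- finite forbidden words from periodic kneading invariants via \Cref{thm:Structure} for one direction, a contrapositive obstruction argument for the other, and a perturbation of $(\beta,\alpha)$ controlled by \Cref{prop:mon_cont_kneading}, \Cref{thm:BSV14} and \Cref{thm:Laurent} for the approximation --- and you correctly identify that the genuinely hard step is making \emph{both} kneading invariants periodic simultaneously while preserving the containment and $\beta'\geq\beta$. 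Since you explicitly defer exactly that step to \cite{LSSS}, what you have is an accurate reconstruction of the strategy rather than a proof; as a citation-plus-skeleton that is acceptable here, because the paper itself treats the result the same way.

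One point to tighten: with the paper's conventions the \emph{upper} kneading invariant is $\tau^{+}_{\beta,\alpha}(p)$ (beginning with $1$) and the \emph{lower} is $\tau^{-}_{\beta,\alpha}(p)$ (beginning with $0$), and by \Cref{thm:Structure} these are respectively the left endpoint of the window $[\tau^{+}(p),\tau^{-}(1))$ and the right endpoint of the window $[\tau^{+}(0),\tau^{-}(p))$. To enlarge $\Omega_{\beta,\alpha}$ one therefore needs $\tau^{+}_{\beta',\alpha'}(p')\preceq\tau^{+}_{\beta,\alpha}(p)$ and $\tau^{-}_{\beta',\alpha'}(p')\succeq\tau^{-}_{\beta,\alpha}(p)$, i.e.\ the upper invariant must move \emph{down} and the lower invariant \emph{up}. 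Your phrase ``the upper kneading invariant to move up and the lower to move down'' reads as the opposite under the paper's terminology (it would shrink the shift); if you meant the upper and lower \emph{endpoints of the constraint windows}, say so explicitly, since by \Cref{prop:mon_cont_kneading}(1) both maps $a\mapsto\tau^{\pm}_{\beta,a}(p_{\beta,a})$ move in the \emph{same} direction as $a$ varies, which is precisely why a perturbation in $\alpha$ alone cannot spread the two invariants apart and why the joint control over $(\beta,\alpha)$ in \cite{LSSS} is delicate.
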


\subsection{Transitivity of intermediate \texorpdfstring{$\beta$}{beta}-transformations}

An interval map $T \colon [0,1] \to [0,1]$ is said to be \textsl{transitive} if for any open subinterval $U$ of $[0,1]$ there exists an $m \in \mathbb{N}$ with $\bigcup_{k = 1}^{m} T^{k}(U) = (0,1)$. The property of transitivity will play an important part in our proof of \Cref{thm:main_2}, and thus we will utilise the following result of \cite{G1990,Palmer79} on non-transitive intermediate $\beta$-transformations.  Note the contrast in the structure of the set of $(\beta, \alpha) \in \Delta$ with $T_{\beta,\alpha}$ transitive and the set of $(\beta, \alpha) \in \Delta$ with $\Omega_{\beta, \alpha}$ of finite type, namely that the former is has positive $2$-dimensional Lebesgue measure and the latter is countable. 

\begin{theorem}[{\cite{G1990,Palmer79}}]\label{thm:G1990+Palmer79}
Let $\Delta_{\operatorname{trans}}$ denote the set of $(\beta, \alpha) \in \Delta$ with $T_{\beta,\alpha}$ transitive.  The sets $\Delta_{\operatorname{trans}}$ and $\Delta \setminus \Delta_{\operatorname{trans}}$ have positive Lebesgue measure.  Moreover, given $(\beta, \alpha) \in \Delta \setminus \Delta_{\operatorname{trans}}$, there exist
        \begin{enumerate}[label={\rm(\roman*)}]
        \item a natural number $n \geq 2$ and $k \in \{1,\ldots, n-1\}$ with $n$ and $k$ co-prime,
        \item a sequence of points $\{ b_{0}, b_{1}, \ldots, b_{2n-1}\}$ in $(0,1)$ with $b_{i} < b_{i+1}$ for all $i \in \{0, 1, \ldots, 2n-2\}$, and
        \item an $\tilde{\alpha} \in [0, 2-\beta^{n}]$,
     \end{enumerate}
such that 
        \begin{enumerate}[label={\rm(\arabic*)}]
        \item the transformation $T_{\beta^{n},\tilde{\alpha}}$ is transitive, 
        \item $T_{\beta,\alpha}^{n}(J_{i}) = J_{i}$ and $T_{\beta,\alpha}(J_{i}) = J_{i+k \bmod{n}}$, for all $i \in \{ 0, 1, \ldots, n-1 \}$, and
        \item $T_{\beta,\alpha}^{n}\vert_{J_{i}}$ is topologically conjugate to $T_{\beta^{n},\tilde{\alpha}}$, for all $i \in \{ 0, 1, \ldots, n-1 \}$, where the conjugation is linear.
     \end{enumerate}
Here, $J_{0} = [0, b_{0}] \cup [b_{2n-1}, 1]$, and $J_{i} = [b_{2i-1}, b_{2i}]$, for all $i \in \{1, 2, \ldots, n-1\}$. Further, there exists a $T_{\beta,\alpha}$-periodic point $q$ in $\mathscr{J}=\bigcup_{i = 0}^{n-2} [b_{2i}, b_{2i+1}]$, such that the orbit of $q$ under $T_{\beta,\alpha}$ is contained in $\mathscr{J}$, and for all $x$ in $\mathscr{J}$ but not in the orbit of $q$, there exists an $m \in \mathbb{N}$ such that $T_{\beta,\alpha}^{m}(x) \in \bigcup_{i = 0}^{n-1} J_{i}$.
\end{theorem}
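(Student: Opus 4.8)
The plan is to separate the assertion into the positive-measure dichotomy and the renormalisation description of the non-transitive maps, establishing the former by displaying an explicit region on each side of $\Delta$ and the latter by a spectral-decomposition argument for the closure of the critical orbit of $T_{\beta,\alpha}$.

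For the dichotomy, observe that the two branches of $T_{\beta,\alpha}$ carry $[0,p_{\beta,\alpha})$ onto $[\alpha,1)$ and $[p_{\beta,\alpha},1)$ onto $[0,\beta+\alpha-1)$, and an elementary inequality shows that both of these intervals contain $p_{\beta,\alpha}$ in their interior whenever $\beta$ exceeds the golden mean. Hence the set $\{(\beta,\alpha)\in\Delta\colon\beta>(1+\sqrt{5})/2\}$, which has positive Lebesgue measure, consists of transitive maps: on it $T_{\beta,\alpha}$ is locally eventually onto by the classical argument for piecewise-affine expanding maps (any subinterval expands under iteration until it meets $p_{\beta,\alpha}$, whereupon its next image contains one-sided neighbourhoods of both $0$ and $1$, and a further iterate recovers all of $(0,1)$). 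For the complementary side I would take $\beta$ close to $1$ and $\alpha$ near $\tfrac{1}{2}$ (so that $p_{\beta,\alpha}$ is near $\tfrac{1}{2}$); then $[0,1)$ decomposes into two disjoint intervals, one through $0$ wrapping past $0$ on the circle $[0,1)=\mathbb{R}/\mathbb{Z}$ and one around $\tfrac{1}{2}$, that are interchanged by $T_{\beta,\alpha}$ with proper forward-invariant union, so $T_{\beta,\alpha}$ is not transitive, and, using the monotonicity and continuity of the kneading invariants (\Cref{prop:mon_cont_kneading}) and \Cref{thm:BSV14}, this interval-swapping configuration persists on a nonempty open subset of $\Delta$, which therefore has positive Lebesgue measure.

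Now fix $(\beta,\alpha)\in\Delta\setminus\Delta_{\operatorname{trans}}$. Non-transitivity means the forward orbit of some open interval omits a nonempty open set, and since $T_{\beta,\alpha}$ is piecewise linear with constant slope $\beta>1$ it is only finitely renormalisable (each renormalisation replaces the slope $\beta$ by $\beta^{m}$ with $m\geq 2$, and $\beta^{m}<2$ is required for the renormalised map to remain an intermediate $\beta$-transformation, so the process must terminate); consequently the closure $A$ of the forward orbit of $\{0,1,\alpha,\beta+\alpha-1\}$ is a finite union of closed intervals, proper in $[0,1)$, with $T_{\beta,\alpha}(A)=A$ and $A$ attracting a neighbourhood of itself. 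The connected components of $A$ are permuted by $T_{\beta,\alpha}$: writing $J_{0}=[0,b_{0}]\cup[b_{2n-1},1]$ for the component through $0$ and $J_{1},\dots,J_{n-1}$ for the others, with endpoints $b_{0}<b_{1}<\dots<b_{2n-1}$, one obtains $T_{\beta,\alpha}(J_{i})=J_{i+k\bmod n}$ for a suitable $k$; here $n\geq 2$ because a short direct computation rules out a proper forward-invariant single interval for $\beta\in(1,2)$, and $\gcd(n,k)=1$ because the cycle can be taken a single $n$-cycle by minimality of $A$. Then the first return time to each $J_{i}$ is the constant $n$, so $T_{\beta,\alpha}^{n}\vert_{J_{i}}$ is piecewise linear of constant slope $\beta^{n}$; since $T_{\beta,\alpha}$, viewed on the circle, has a single discontinuity (at $0$; it is continuous at $p_{\beta,\alpha}$ there), the cycle $J_{0}\to J_{k}\to\cdots\to J_{0}$ meets this discontinuity exactly once, so $T_{\beta,\alpha}^{n}\vert_{J_{i}}$ has exactly two increasing branches onto $J_{i}$, and rescaling $J_{i}$ affinely onto $[0,1)$ conjugates it to a map $x\mapsto\beta^{n}x+\tilde\alpha\bmod 1$. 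A two-branch map of this shape forces $\beta^{n}<2$ and $\tilde\alpha\in[0,2-\beta^{n}]$, i.e. $(\beta^{n},\tilde\alpha)\in\Delta$; reading off the kneading data of the return map and invoking \eqref{eq:commutative_diag} and \Cref{thm:Laurent} confirms that its base is indeed $\beta^{n}$, and $T_{\beta^{n},\tilde\alpha}$ is transitive because $n$ was taken maximal. Finally the set $\mathscr{J}$ comprising the gaps other than the distinguished one contains the $T_{\beta,\alpha}$-periodic point $q$ whose itinerary marks the common boundaries of consecutive components, with its full orbit in $\mathscr{J}$, and every remaining point of $\mathscr{J}$ is pushed off $\mathscr{J}$ by expansion and thus eventually enters $A=\bigcup_{i}J_{i}$.

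The main obstacle is the structure step: proving that non-transitivity forces precisely this finite cyclic arrangement of intervals, that the permutation is a single $n$-cycle with $n\geq 2$ and $\gcd(n,k)=1$, that the return map has exactly two branches, and that the renormalised slope $\beta^{n}$ stays below $2$. This is a spectral-decomposition statement for piecewise-monotone expanding interval maps, and I would prove it either along classical lines — stabilisation of orbits of intervals for piecewise-affine expanding maps and the resulting basic-set decomposition, as in the work of Blokh and of de Melo--van Strien, which is essentially the route of the cited references — or, in a manner closer to the present paper, combinatorially: characterise non-transitivity of $T_{\beta,\alpha}$ by the property that both kneading invariants $\tau_{\beta,\alpha}^{\pm}(p_{\beta,\alpha})$ are concatenations of two fixed finite blocks $\xi,\zeta$ with $\xi_{1}\xi_{2}=01$ and $\zeta_{1}\zeta_{2}=10$, set $n=\lvert\xi\rvert=\lvert\zeta\rvert$, let $\tilde\alpha$ be the parameter whose kneading invariants are obtained from $\tau_{\beta,\alpha}^{\pm}(p_{\beta,\alpha})$ by de-substituting the blocks, recover the $J_{i}$ as the $\pi_{\beta,\alpha}$-images of the sequences whose block decomposition has a prescribed phase, and use \Cref{thm:BSV14} and \Cref{thm:Laurent} to verify that this produces a genuine element of $\Delta$ with base $\beta^{n}$. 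Reconciling the two descriptions, and pinning down coprimality of $n$ and $k$ and transitivity of the renormalised map, is where the substantive work lies.
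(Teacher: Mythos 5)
A preliminary remark: the paper offers no proof of this statement — it is quoted directly from Glendinning \cite{G1990} and Palmer \cite{Palmer79} — so there is no internal argument to compare against; your first suggested route (a spectral/renormalisation decomposition for piecewise-affine expanding maps) is indeed the shape of the cited classical arguments. Judged as a proof, however, the proposal has genuine gaps.

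The most concrete one is on the transitive side of the dichotomy. You argue that for $\beta$ above the golden mean both branch images $[\alpha,1)$ and $[0,\beta+\alpha-1)$ contain $p_{\beta,\alpha}$ in their interior (true), and that this alone gives locally eventually onto ``by the classical argument''. That implication is false. Take $\beta=1.3$, $\alpha=0.35$, so $p_{\beta,\alpha}=0.5$: both branch images $[0.35,1)$ and $[0,0.65)$ contain $p_{\beta,\alpha}$, yet with $J_{1}=[0.3,0.7]$ and $J_{0}=[0,0.26]\cup[0.74,1)$ one checks $T_{\beta,\alpha}([0.3,0.5))=[0.74,1)$, $T_{\beta,\alpha}([0.5,0.7])=[0,0.26]$, $T_{\beta,\alpha}([0,0.26])=[0.35,0.688]$ and $T_{\beta,\alpha}([0.74,1))=[0.312,0.65)$, so $J_{0}\cup J_{1}$ is a proper forward-invariant set and $T_{\beta,\alpha}$ is not transitive. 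Hence your final step ``a further iterate recovers all of $(0,1)$'' cannot be justified from the hypothesis you verify; what actually rules out such cycles for $\beta>\sqrt{2}$ (and a fortiori above the golden mean) is the slope obstruction $\beta^{n}\leq 2$ for an $n$-cycle of intervals whose return map has two affine branches — and establishing that every non-transitive map carries such a cycle is exactly the structure theorem you postpone, so the argument as organised is circular at this point.

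The second gap is that the structure step \emph{is} the theorem, and you leave it open: that non-transitivity forces a single $n$-cycle of intervals with $\gcd(n,k)=1$, that the return map has exactly two branches with $T_{\beta,\alpha}^{n}(J_{i})=J_{i}$ (equality, not mere invariance), that the rescaled return map is an intermediate $\beta^{n}$-transformation which is transitive, and that the gaps $\mathscr{J}$ contain a single periodic orbit off which every point eventually enters $\bigcup_{i}J_{i}$ — all of this is asserted or deferred (``where the substantive work lies''). Moreover the launching point you propose, namely that the closure of the forward orbit of $\{0,1,\alpha,\beta+\alpha-1\}$ is a finite union of closed intervals attracting a neighbourhood of itself, is unsubstantiated: transitivity of the renormalised return map does not make the closure of the critical orbits a union of intervals, so even the decomposition would have to be set up differently (via the orbit of the gaps or of the periodic point $q$, as in the cited references, or via the combinatorial de-substitution of the kneading invariants you sketch, which would then need \Cref{thm:BSV14} and \Cref{thm:Laurent} carried out in detail). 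The non-transitive side of the measure dichotomy (an open set of renormalisable parameters for $\beta$ close to $1$) is correct in spirit but likewise only sketched. As it stands the proposal is a sensible programme for reproving Glendinning's and Palmer's theorem, not a proof of it.
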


\subsection{A sufficient condition for a dynamical set to be winning}\label{sec:HY}

To prove \Cref{thm:main_2} we not only appeal to the results of \cite{G1990,Palmer79}, but also to \cite[Theorem 2.1]{HY}, where a sufficient condition for certain dynamical sets to be winning is given. In order to state \cite[Theorem 2.1]{HY} we require the following notation.  A partition of $[0,1]$ is a collection of finitely many intervals $\{ I(i) \}_{i \in \Lambda}$, where $\Lambda = \{0, 1, \ldots, m-1\}$ for some $m \in \mathbb{N}$, with pairwise disjoint interiors such that $[0,1] = \bigcup_{i \in \Lambda} I(i)$. Here, we assume that the intervals are ordered, namely, that if $i$ and $j \in \{0, 1, \ldots, m-1\}$ with $i < j$, then, for all $x \in I(i)$ and $y \in I(j)$, we have that $x \leq y$.  Let $T \colon [0, 1] \to [0,1]$ and let $\{ I(i) \}_{i \in \Lambda}$ denote a partition of $[0, 1]$, such that $T$ restricted to $I(i)$ is monotonic and continuous for all $i \in \Lambda$. For $\xi = \xi_{1} \xi_{2} \cdots \xi_{n} \in \Lambda^{n}$, for some $n \in \mathbb{N}$, we set
    \begin{align*}
        I(\xi) = \bigcap_{i = 1}^{n} \ \{ x \in [0, 1] \colon T^{i-1}(x) \in I(\xi_{i}) \}.
    \end{align*}
If $I(\xi)$ is non-empty, we call $I(\xi)$ a \textsl{level $n$ cylinder set} of $T$, and $\xi$ an \textsl{admissible word of length $n$} with respect to the partition $\{ I(i)\}_{i \in \Lambda}$.  For $n \in \mathbb{N}_{0}$, we denote by $\Omega_{T}\vert_{n}$ the set of all admissible words of length $n$, where by convention $\Omega_{T}\vert_{0} = \{ \varepsilon \}$, and set $\Omega_{T}^{*} = \bigcup_{n \in \mathbb{N}_{0}}  \Omega_{T}\vert_{n}$.  

When $T = T_{\beta, \alpha}$ and when our partition is $\{ I(0) = [0,p_{\beta, \alpha}), I(1)=[p_{\beta, \alpha},1] \}$, for some $(\beta, \alpha) \in \Delta$, we have $\Omega_{T}\vert_{n} =\Omega_{\beta, \alpha}\vert_{n}$.  Further, for $\xi \in \{0, 1\}^{*}$, we have $I(\xi)$ is non-empty if and only if there exists an $\omega \in \Omega_{\beta, \alpha}$ with $\omega\vert_{\lvert \xi \rvert} = \xi$, and $\overline{I(\xi)} = \pi_{\beta, \alpha}( \{ \omega \in \Omega_{\beta, \alpha} \colon \omega\vert_{n} = \xi \} )$, where $\overline{I(\xi)}$ denotes the closure of $I(\xi)$.

For $\xi$ and $\nu \in \Omega_{T}^{*}$ and $c > 0$ a real number, we say that $\xi$ is \textsl{$\nu$-extendable} if the concatenation $\xi\nu$ is admissible, and say that the cylinder sets $I(\xi)$ and $I(\nu)$ are \textsl{$c$-comparable} if $c \leq \lvert I(\xi) \rvert / \lvert I(\nu) \rvert \leq 1/c$. We call $T$ is \textsl{piecewise locally $C^{1+\delta}$ expanding} if $T$ restricted to $I(i)$ is differentiable for all $i \in \Lambda$, and
    \begin{enumerate}[label={\rm(\arabic*)}]
        \item there exists a real number $\eta > 0$, such that $\lvert T'(x) \rvert > \eta$ for all $x \in I(i)$ and $i \in \Lambda$, and there exists $k \in \mathbb{N}$ and a real number $\lambda > 1$ such that $\lvert (T^{k})'(x) \rvert \geq \lambda$ for all $\xi \in \Omega_{T}\vert_{k}$ and $x \in I(\xi)$, and
        \item there exist two positive constants $\delta$ and $c$ such that for all $i \in \Lambda$ and all $x$ and $y \in I(i)$,
            \begin{align*}
                \left\lvert \frac{T'(x)}{T'(y)} - 1 \right\rvert \leq c \lvert x - y \rvert^{\delta}. 
            \end{align*}
    \end{enumerate}
We call $T$ \textsl{Markov} if for all $i$ and $j \in \Lambda$, either $T(I(i)) \cap I(j) = \emptyset$, or $I(j) \subseteq T(I(i))$. Letting $(\beta, \alpha) \in \Delta$ with $\Omega_{\beta, \alpha}$ a subshift of finite type, we set $A = \{ a_{1}, a_{2}, \ldots, a_{n} \}$ to be the set of ordered points of 
    \begin{align*}
        \{ \pi_{\beta, \alpha}(\sigma^{k}(\tau_{\beta,\alpha}^{+}(p_{\beta, \alpha}))) \colon k \in \mathbb{N}\} \cup \{ \pi_{\beta, \alpha}(\sigma^{k}(\tau_{\beta,\alpha}^{-}(p_{\beta, \alpha}))) \colon k \in \mathbb{N} \}.
    \end{align*}
The transformation $T_{\beta, \alpha}$ is a piecewise locally $C^{1+\delta}$ expanding Markov map with respect to the partition 
    \begin{align}\label{eq:Markov_Partition}
        P_{\beta,\alpha} = \{[a_{1}, a_{2}), \ldots, [a_{n-2}, a_{n-1}), [a_{n-1}, a_{n}]\}.
    \end{align}
Letting $T$ be a piecewise locally $C^{1+\delta}$ expanding map with respect to the partition $\{ I(i) \}_{i \in \Lambda}$, then for $x \in [0, 1]$, there exists an infinite word $\omega = \omega_{1} \omega_{2} \cdots  \in \Lambda^{\mathbb{N}}$, with $\omega \vert_{k} \in \Omega_{T}^{*}$ for all $k \in \mathbb{N}$ and such that $\{ x \} = \bigcap_{k \in \mathbb{N}} \overline{I(\omega\vert_{n})}$. We call $\omega$ a \textsl{symbolic representation} of $x$ with respect to the partition $\{ I(i)\}_{i \in \Lambda}$.  In the case that $T = T_{\beta,\alpha}$, for some $(\beta,\alpha) \in \Delta$, for a point $x \in [0, 1]$, symbolic representations of $x$ with respect to the partition $\{ [0,p_{\beta, \alpha}), [p_{\beta,\alpha},1] \}$ are $\tau_{\beta, \alpha}^{\pm}(x)$.  Note, all but a countable set of points have a unique symbolic representation, we denote this countable set by $E = E_{T}$.

For a fixed $x \in [0,1]$ and $\gamma \in (0, 1)$, let $\omega$ denote a symbolic representation of $x$. We denote the following geometric condition by $H_{x, \gamma}$:
    \begin{align}\label{eq:condition_1}
        \adjustlimits
        \lim_{i \to \infty} \sup_{\; u\,:\,u\;\text{and}\;u\omega\vert_{i}\in\Omega_{T}^{*}} \frac{\lvert I(u\omega\vert_{i}) \rvert}{\lvert I(u) \rvert} = 0
    \end{align}
and there exists a natural number $i^{*}$ and a real number $c>0$ such that if $i \in \mathbb{N}$ with $i \geq i^{*}$, for all $\nu$ and $\eta \in \Omega_{T}^{*}$ which are $\omega\vert_{i}$-extendable with $I(\nu)$ and $I(\eta)$ being $\gamma/4$-comparable, either
    \begin{align}\label{eq:condition_2}
        \operatorname{dist}(I(\nu\omega\vert_{i}), I(\eta\omega\vert_{i})) = 0
        \quad \text{or} \quad
        \operatorname{dist}(I(\nu\omega\vert_{i}), I(\eta\omega\vert_{i})) \geq c \operatorname{dist}(I(\nu), I(\eta)).
    \end{align}

    \begin{theorem}[{\cite{HY}}]\label{thm_HY_Thm_2.1}
        Let $T$ be a piecewise locally $C^{1+\delta}$ expanding map with respect to the partition $\{ I(i) \}_{i \in \Lambda}$, and let $x \in [0, 1]$ with symbolic representation $\omega$.
            \begin{enumerate}[label={\rm(\arabic*)}]
                \item If $H_{x, \gamma}$ is satisfied for some $\gamma \in (0, 1)$ , then the set $\{ y \in [0,1] \colon T^{k}(y) \not\in I(\omega\vert_{m}) \; \text{for all} \; k \in \mathbb{N}_{0} \} \cup E$ is $\rm ( 1/2, \gamma)$-winning for some natural number $m$.
                \item If $x \not\in E$ and if $H_{x, \gamma}$ is satisfied for any $\gamma \in (0, 1)$, then the set ${\rm BAD}_{T}(x) = \{ y \in [0, 1] \colon x \not\in \overline{\{T^k(y) \colon k \in \mathbb{N}_{0}\}}\}$ is $1/2$-winning.
            \end{enumerate}
        \end{theorem}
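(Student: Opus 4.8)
The plan is to play Schmidt's game from the side of the player who is trying to pull the intersection point into the target set, exploiting that this player's intervals contract by the \emph{fixed} factor $1/2$ while the opponent's contract by an arbitrary $\gamma\in(0,1)$. For part~(1) I would first record the symbolic reformulation: a point $y\in[0,1]$ satisfies $T^{k}(y)\notin I(\omega\vert_{m})$ for all $k\in\mathbb{N}_{0}$ if and only if $y$ lies off the family of cylinders
\[
  \mathcal{B}_{m}\;\coloneqq\;\bigcup_{k\geq 0}\;\bigcup_{\substack{u\in\Omega_{T}\vert_{k}\\ u\,\omega\vert_{m}\in\Omega_{T}^{*}}} I(u\,\omega\vert_{m}),
\]
apart from the countably many $y$ whose orbit meets a partition or cylinder endpoint, all of which lie in $E$. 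So it suffices, for a suitably large $m$, to produce a strategy forcing $\bigcap_{n}W_{n}\in([0,1]\setminus\mathcal{B}_{m})\cup E$. The only general facts about $T$ I would use are the standard consequences of being piecewise locally $C^{1+\delta}$ expanding: cylinder sets shrink at a uniform exponential rate, and there is a distortion constant $C\geq 1$ such that for every admissible $u$ the map $T^{\lvert u\rvert}$ restricted to $I(u)$ is a homeomorphism with distortion at most $C$ onto its image, so that relative sizes and relative positions of sub-cylinders $I(u\xi)\subseteq I(u)$ agree, up to the factor $C$, with those of $I(\xi)$ inside $T^{\lvert u\rvert}(I(u))$.

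Next I would run the game scale by scale, processing the cylinders of $\mathcal{B}_{m}$ in order of decreasing length. The first clause \eqref{eq:condition_1} of $H_{x,\gamma}$ says precisely that appending $\omega\vert_{m}$ to any host cylinder shrinks it, uniformly in the host, to an arbitrarily small fraction of that host once $m$ is large; combined with the distortion bound and the bounded-multiplicity covering of each length scale by cylinders, this means that at the round when the game interval $W_{n}$ has length comparable to a forbidden cylinder $I(u\,\omega\vert_{m})$, that cylinder—and the boundedly many forbidden cylinders of comparable size meeting $W_{n}$—has length at most $\varepsilon\lvert W_{n}\rvert$ with $\varepsilon\to 0$ as $m\to\infty$, while the second clause \eqref{eq:condition_2} guarantees that these few forbidden cylinders stay separated by a definite fraction of $\lvert W_{n}\rvert$ and so do not coalesce into one large obstruction. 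Because the responder's moves contract by the fixed factor $1/2$—so that a half-length subinterval may be placed flush to either endpoint of the ambient interval—an elementary argument (drive the interval to the far side of the obstruction, let the opponent react, and repeat) shows that over a bounded number of consecutive rounds the responder can steer the game interval entirely off a given forbidden cylinder and keep it off forever, provided $\varepsilon$ is below a constant multiple of $c\gamma$, which holds once $m$ is large in terms of $\gamma$.

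The remaining point is scheduling: the avoidance maneuvers for different forbidden cylinders must not collide. Here I would use that \eqref{eq:condition_1} bounds the total length of the generation-$k$ forbidden cylinders by $\sup_{u}\lvert I(u\,\omega\vert_{m})\rvert/\lvert I(u)\rvert$, uniformly in $k$ and tending to $0$ with $m$, while \eqref{eq:condition_2} prevents clustering; so, for $m$ large in terms of $\gamma$, between the bounded-length maneuver for one forbidden cylinder and the next one that the nested sequence must confront there are ample rounds to spare, and the maneuvers can be carried out consecutively, interleaved with a similar bounded-length dodge of each point of the countable exceptional set. The intersection point then lies off $\mathcal{B}_{m}$ or in $E$, so $([0,1]\setminus\mathcal{B}_{m})\cup E$ is $(1/2,\gamma)$-winning, which is part~(1). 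Part~(2) then follows formally: when $x\notin E$ the cylinders $I(\omega\vert_{m})$ shrink to $\{x\}$, whence $\mathrm{BAD}_{T}(x)=\bigcup_{m}\{y\colon T^{k}(y)\notin I(\omega\vert_{m})\text{ for all }k\in\mathbb{N}_{0}\}$; each set in this union contains $(([0,1]\setminus\mathcal{B}_{m})\cup E)$ minus a countable set, and since deleting a countable set preserves winning (sacrifice one extra bounded-length dodge per point) and a superset of a winning set is winning, $\mathrm{BAD}_{T}(x)$ is $(1/2,\gamma)$-winning; as this holds for every $\gamma$ under the stated hypothesis, it is $1/2$-winning.

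The step I expect to be the main obstacle is the middle one: turning the heuristic ``at each length scale there are boundedly many, small, well-separated obstructions'' into a bound that is valid simultaneously across all scales and against every opponent strategy, and verifying quantitatively that the responder's fixed-ratio $1/2$-moves are strong enough to clear each obstruction within a bounded number of rounds. The delicate feature is that the game intervals need not have endpoints at cylinder boundaries, so every size and separation comparison has to be made between $\gamma/4$-comparable cylinders and transported through the distortion constant $C$—which is exactly why the geometric hypotheses are phrased as in \eqref{eq:condition_1} and \eqref{eq:condition_2} rather than as statements about the interval $W_n$ directly.
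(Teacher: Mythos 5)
The paper does not prove this statement at all: it is imported verbatim from Hu and Yu \cite{HY} and used as a black box, so there is no internal proof to compare your attempt with. Measured against the argument in \cite{HY}, your outline does follow the same general route (a Schmidt-game strategy for the player with the fixed ratio $1/2$ that dodges the forbidden cylinders $I(u\,\omega\vert_{m})$, using \eqref{eq:condition_1} to make them small relative to a comparably sized game interval and \eqref{eq:condition_2} to control their relative positions), so the approach itself is not misguided.

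However, as written there is a genuine gap: the entire quantitative content of the theorem --- the inductive construction of a single $(1/2,\gamma)$-strategy valid against every play, i.e.\ the verification that at each round the game interval meets only a controlled family of forbidden cylinders of the relevant size, that each can be cleared within a bounded number of $1/2$-moves, and that the clearing manoeuvres at different scales (together with the extra dodges of the countable set $E$ needed for part~(2)) can be scheduled without conflict --- is exactly the step you defer, as you yourself acknowledge; this is where the whole proof lives. Moreover, you misread \eqref{eq:condition_2}: it permits $\operatorname{dist}(I(\nu\omega\vert_{i}),I(\eta\omega\vert_{i}))=0$, and when the distance is positive it is only bounded below by $c\operatorname{dist}(I(\nu),I(\eta))$, which need not be a definite fraction of $\lvert W_{n}\rvert$; thus forbidden cylinders may touch and form clusters, and the argument must instead bound the total length of such a cluster via \eqref{eq:condition_1} and use the $\gamma/4$-comparability hypothesis to see that only boundedly many clusters are relevant at a given round --- your claim that \eqref{eq:condition_2} prevents coalescence is not what the hypothesis says. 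Finally, in part~(2) two points you only gesture at need justification: that additionally dodging the countable set $E$ preserves $(1/2,\gamma)$-winning (the flush-to-an-endpoint trick, interleaved with the main strategy), and that for $x\notin E$ an orbit avoiding $I(\omega\vert_{m})$ for all times forces $x\notin\overline{\{T^{k}(y)\colon k\in\mathbb{N}_{0}\}}$, which uses that $x$ lies in the interior of $I(\omega\vert_{m})$ because cylinder endpoints belong to $E$.
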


We conclude this section with the following proposition which we use in conjunction with \Cref{thm:G1990+Palmer79,thm_HY_Thm_2.1} and the fact that the property of winning is preserved under bijective affine transformations to prove \Cref{thm:main_2}.

    \begin{proposition}\label{prop:alpha-winning_transport}
        Let $T$ be a piecewise locally $C^{1+\delta}$ expanding interval map, let $x \in [0, 1]$ and set
            \begin{align*}
                \mathrm{BAD}(T,x) \coloneqq \{ y \in [0,1] \colon x \not\in \overline{\{T^{n}(y) \colon n \in \mathbb{N}\}}\}.
            \end{align*}
        If, for a fixed $k \in \mathbb{N}$, we have that $\mathrm{BAD}(T^k, T^m(x))$ is winning for all $m \in \{ 0, 1, \ldots, k\}$, then $\mathrm{BAD}(T,x)$ is winning.
    \end{proposition}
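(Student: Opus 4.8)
The plan is to deduce the statement from two standard facts about winning sets — that a superset of a winning set is winning, and that a finite intersection of winning sets is winning \cite{S} — together with the set inclusion
\[
    \bigcap_{m=0}^{k} \mathrm{BAD}(T^{k}, T^{m}(x)) \subseteq \mathrm{BAD}(T,x).
\]
Granting this inclusion, the proposition is immediate: by hypothesis the left-hand side is a finite intersection of winning sets, hence winning, and any set containing a winning set is winning, so $\mathrm{BAD}(T,x)$ is winning.

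To establish the inclusion I would argue by contraposition, using Euclidean division to decompose the forward $T$-orbit of a point into $k$ interleaved $T^{k}$-orbits. Suppose $y \notin \mathrm{BAD}(T,x)$, so that $x \in \overline{\{T^{n}(y) \colon n \in \mathbb{N}\}}$, and fix $n_{i} \in \mathbb{N}$ with $T^{n_{i}}(y) \to x$. Write $n_{i} = k q_{i} + r_{i}$ with $r_{i} \in \{0, 1, \ldots, k-1\}$ and $q_{i} \geq 0$, and pass to a subsequence along which $r_{i} \equiv r$ is constant. The point of the division is that $T^{k-r}\bigl(T^{n_{i}}(y)\bigr) = T^{k q_{i} + k}(y) = (T^{k})^{q_{i}+1}(y)$ lies in the forward $T^{k}$-orbit of $y$; passing to a further subsequence along which this sequence converges, say to $v$, we get $v \in \overline{\{(T^{k})^{n}(y) \colon n \in \mathbb{N}\}}$. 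Provided the limit can be passed through $T^{k-r}$, one obtains $v = T^{k-r}(x)$, hence $T^{k-r}(x) \in \overline{\{(T^{k})^{n}(y) \colon n \in \mathbb{N}\}}$, i.e. $y \notin \mathrm{BAD}(T^{k}, T^{k-r}(x))$; since $k - r \in \{1, \ldots, k\} \subseteq \{0, 1, \ldots, k\}$, this contradicts $y \in \bigcap_{m=0}^{k}\mathrm{BAD}(T^{k},T^{m}(x))$. The residue $r = 0$ is covered directly by the $m = 0$ term: there $T^{n_{i}}(y) = (T^{k})^{q_{i}}(y)$ with $q_{i}\in\mathbb{N}$, so $x$ itself lies in $\overline{\{(T^{k})^{n}(y)\colon n\in\mathbb{N}\}}$, i.e. $y\notin\mathrm{BAD}(T^{k},x)$.

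The one delicate point — and the main obstacle — is that $T$ and its iterates are only piecewise continuous with respect to the defining partition $\{I(i)\}_{i \in \Lambda}$, so one cannot naively commute $\lim$ with $T^{k-r}$. I would handle this by first passing to a subsequence along which the length-$(k{+}1)$ itinerary of $T^{n_{i}}(y)$ with respect to $\{I(i)\}_{i\in\Lambda}$ is a fixed word; since $T$ restricted to the closure of each atom $I(i)$ admits a monotone continuous extension, an induction on the number of iterates then shows that $T^{l}\bigl(T^{n_{i}}(y)\bigr)$ converges for each $l \le k-r$, the limit lying in the closure of the appropriate atom, which legitimises the passage to the limit above. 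Equivalently, one may run the whole contraposition inside the symbolic coding of $T$ determined by $\{I(i)\}_{i \in \Lambda}$, where the shift is genuinely continuous and the argument becomes elementary word combinatorics, transferring back to $[0,1]$ as in \Cref{thm_HY_Thm_2.1} (absorbing, if necessary, the countable set $E$ on which the coding fails to be injective). Everything else is bookkeeping with the division $n_{i} = k q_{i} + r_{i}$ and the two cited properties of winning sets.
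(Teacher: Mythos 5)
Your proposal is correct and follows the same route as the paper: the paper's proof is a single sentence invoking exactly the two facts you cite together with the inclusion $\bigcap_{m=0}^{k}\mathrm{BAD}(T^{k},T^{m}(x))\subset\mathrm{BAD}(T,x)$, which it asserts holds ``by construction''. Your Euclidean-division argument, including the care you take over the discontinuity points of $T$ when passing the limit through $T^{k-r}$, simply supplies the detail that the paper omits.
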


\begin{proof}
This follows from the fact that winning is preserved under taking countable intersections and since, by construction, $\mathrm{BAD}(T^k,x)\cap \mathrm{BAD}(T^k,T(x))\cap \cdots \cap \mathrm{BAD}(T^k,T^k(x))\subset \mathrm{BAD}(T,x)$.
\end{proof}

\section{Intermediate \texorpdfstring{$\beta$}{beta}-shifts as greedy \texorpdfstring{$\beta$}{beta}-shifts: Proof of \texorpdfstring{\Cref{thm:main}}{Theorem 1.1}}\label{sec:proof_thm_1_1}

In proving \Cref{thm:main} and \Cref{Cor_2,Cor_3}, we will investigate the question, given a fixed $\beta \in (1, 2)$, for which $\omega \in \{0,1\}^\mathbb{N}$ does there exist $(\beta', \alpha') \in \Delta$ with $\Omega^+_{\beta', \alpha'} = \{ \nu \in \{0,1\}^\mathbb{N} \colon \omega \preceq \sigma^{n}(\nu) \prec \tau_{\beta,0}^{-}(1) \; \text{for all} \; n \in \mathbb{N} \}$? Not only is this question interesting in its own right, but in classifying such words, we will be able to transfer the results from \cite{KKLL} to the setting of the intermediate $\beta$-transformations. With this in mind, we let $\mathcal{A}_\beta$ denote the set of all such words and set $\rho = \inf_{n \in \mathbb{N}_{0}} \pi_{\beta,0}(\sigma^{n}(\tau_{\beta,0}^{-}(1)))$. Further, we utilise the following notation. We let $t_{\beta, 0, c} \in (0,1)$ be such that $\dim_H(K^{+}_{\beta,0}(t))>0$ for all $t < t_{\beta, 0, c}$ and $\dim_H(K^{+}_{\beta, 0}(t)) = 0$ for all $t > t_{\beta, 0, c}$, and set $\mathcal{T}_{\beta, 0, c} = \tau_{\beta,0}^{+}(t_{\beta, c})$.

\begin{proof}[Proof of \texorpdfstring{\Cref{thm:main}}{Theorem 1.1}]
For $\beta\in(1,2)$, let $\mathcal{B}_{\beta} \coloneqq \{ \omega \in \mathcal{E}_{\beta,0}^{+} \colon \pi_{\beta,0}(\omega) \leq \rho \; \text{and} \; \omega \prec \mathcal{T}_{\beta, 0, c}\}$.  By \Cref{cor:From_greedy_to_intermediate} and the commutativity of the diagram given in \eqref{eq:commutative_diag}, observe that it is sufficient to show 
    \begin{enumerate}
        \item $\mathcal{A}_\beta\subseteq \mathcal{B}_\beta$ with equality holding for Lebesgue almost every $\beta\in(1,2)$, 
        \item there exist $\beta\in (1,2)$ such that $\mathcal{A}_\beta \neq \mathcal{B}_\beta$, and
        \item  if the quasi-greedy $\beta$-expansion of $1$ is periodic, then $\mathcal{A}_\beta = \mathcal{B}_\beta$.
    \end{enumerate}    
To show $\mathcal{A}_\beta\subseteq \mathcal{B}_\beta$, let $\beta \in (1, 2)$ and $\eta \in \mathcal{A}_\beta$ be fixed.  Setting $\omega = 1\eta$ and $\nu = 0\tau_\beta^-(1)$, we observe that they meet Conditions~(1)--(4) of \Cref{thm:BSV14} . Condition~(2) of \Cref{thm:BSV14} gives $\nu \in \Omega^{+}(\omega, \nu)$, and so, for a given $n \in \mathbb{N}_{0}$,
    \begin{align*}
        \eta \preceq \sigma^{n}(\eta) \prec 0 \tau_{\beta,0}^{-}(1) \quad \text{or} \quad 1\eta \preceq \sigma^{n}(\eta) \prec \tau_{\beta,0}^{-}(1),
    \end{align*}
yielding that $\eta \preceq \sigma^{n}(\eta) \prec \tau_{\beta,0}^{-}(1)$ for all $n \in \mathbb{N}_{0}$, namely that $\eta \in \mathcal{E}_{\beta,0}^{+}$. Condition~(2) of \Cref{thm:BSV14} also gives $\omega\in \Omega^{-}(\omega, \nu)$, and so, for a given $n \in \mathbb{N}_{0}$,
    \begin{align*}
        \eta \prec \sigma^{n}(\tau_{\beta,0}^{-}(1)) \preceq 0 \tau_{\beta,0}^{-}(1) \quad \text{or} \quad 1\eta \prec \sigma^{n}(\tau_{\beta,0}^{-}(1)) \preceq \tau_{\beta,0}^{-}(1).
    \end{align*}
This implies that $\eta \prec \sigma^{n}(\tau_\beta^-(1))$ for all $n \in \mathbb{N}_{0}$, and so $\pi_{\beta,0}(\eta)\leq \rho$. Since Condition~(3) of \Cref{thm:BSV14} holds, the topological entropy of $(\Omega(\omega,\nu), \sigma)$ is positive, and thus $\eta \prec \mathcal{T}_{\beta, 0, c}$. Therefore, $\eta \in \mathcal{B}_\beta$, and hence $\mathcal{A}_\beta\subseteq \mathcal{B}_\beta$. 

To see that $\mathcal{A}_\beta = \mathcal{B}_\beta$ for Lebesgue almost every $\beta \in (1,2)$, from the concluding remarks of \cite{Schme} we know that, for Lebesgue almost all $\beta \in (1,2)$ there is no bound on the length of blocks of consecutive zeros in the quasi-greedy $\beta$-expansion of $1$, namely $\tau_{\beta,0}^-(1)$.  This implies that $\rho = 0$, and hence that $\mathcal{B}_\beta=\{ 0^\infty \}$. Since $\tau_{\beta,0}^{\pm}(0) = 0^\infty$ it follows that $0^\infty \in \mathcal{A}_\beta$, and thus that $\mathcal{B}_\beta \subseteq \mathcal{A}_\beta$ for Lebesgue almost every $\beta \in (1,2)$. This in tandem with the fact that $\mathcal{A}_\beta\subseteq \mathcal{B}_\beta$ for all $\beta \in (1,2)$, yields that $\mathcal{A}_\beta = \mathcal{B}_\beta$ for Lebesgue almost every $\beta \in (1,2)$.

Let $\beta$ denote the algebraic number with minimal polynomial $x^5-x^4-x^3-2x^2+x+1$. An elementary calculation yields that $\tau_{\beta,0}^{-}(1)=11(100)^{\infty}$. We claim that $\xi = 00(011)^\infty \in \mathcal{B}_\beta$, but that $\xi \not\in \mathcal{A}_\beta$, namely that $\mathcal{A}_\beta \subsetneq \mathcal{B}_\beta$.  It is readily verifiable that $\xi \in \mathcal{E}_\beta^+$ and also, since $\rho=\pi_{\beta,0}((001)^\infty)$, that $\pi_{\beta,0}(\xi) < \rho$. This yields that $\{001, 011 \}^\mathbb{N} \subset \mathcal{K}^{+}_{\beta,0}(\pi_{\beta,0}(\xi))$, and hence that $h_{\operatorname{top}}(\sigma\vert_{\mathcal{K}^{+}_{\beta,0}(\pi_{\beta,0}(\xi))}) > 0$. In other words, we have $\xi \prec \mathcal{T}_{\beta,c}$, and so $\xi \in \mathcal{B}_\beta $.  By way of contradiction, suppose that $\xi \in \mathcal{A}_\beta$.  Set $\omega=0\tau_{\beta,0}^{-}(1)=011(100)^\infty$ and $\nu = 1\xi = 100(011)^\infty$. In which case $\omega$ and $\nu \in \{ \chi, \zeta\}^\mathbb{N}$ with $\chi=011$ and $\zeta=100$. Noting that $\chi$ and $\zeta$ are words of length three in the alphabet $\{0,1\}$, that $\chi\vert_{2} = 01$, $\zeta\vert_{2} = 10$, that $\chi^\infty \in \Omega^{-}(\chi^\infty, \zeta^\infty)$ and $\zeta^\infty \in \Omega^{+}(\chi^\infty, \zeta^\infty)$, but that $\omega=\chi\zeta^\infty \neq \chi^\infty$, contradicting Condition~(4) of \Cref{thm:BSV14}.

It remains to prove that if $\tau_{\beta,0}^-(1)$ is periodic, then $\mathcal{A}_\beta = \mathcal{B}_\beta$. To this end, fix $\beta \in (1, 2)$ with $\tau_{\beta,0}^{-}(1)$ periodic. Let $\xi \in \mathcal{B}_\beta $ and set $\nu=1 \xi$ and $\omega = 0\tau_\beta^-(1)$. By assumption, $\xi \in \mathcal{E}_{\beta,0}^+$, and so $\sigma(\nu) \preceq \sigma^{n}(\nu) \prec \sigma(\omega)$ and therefore $\nu \in \Omega^{+}(\omega, \nu)$, and since $\tau_{\beta,0}^{-}(1)$ is the quasi greedy $\beta$-expansion of $1$ in base $\beta$, we have $\sigma^{n}(\omega) \preceq \sigma(\omega)$ for all $n \in \mathbb{N}_{0}$. As $\pi_{\beta,0}(\xi) < \rho$ we have $ \sigma(\nu) \prec \sigma^n(\omega)$, and so $\omega \in \Omega^{-}(\omega, \nu)$. Thus, $\omega$ and $\nu$ satisfy Conditions~(1) and~(2) of \Cref{thm:BSV14}. Condition~(3) of \Cref{thm:BSV14} follows from $\xi \prec \mathcal{T}_{\beta,c}$. To conclude the proof, it suffices to show that $\omega$ and $\nu$ satisfy Condition~(4) of \Cref{thm:BSV14}.  Suppose there exist $\chi$ and $\zeta \in \{0,1\}^{*}$ of length at least three with 
    \begin{align*}
        \chi\vert_{2} = 01, \quad
        \zeta\vert_{2} = 10, \quad
        \chi^{\infty} \in \Omega^{-}(\chi^{\infty},\zeta^{\infty}),
        \quad \text{and} \quad 
        \zeta^{\infty} \in \Omega^{+}(\chi^{\infty},\zeta^{\infty}),
    \end{align*}
and such that $\omega$ and $\nu \in \{ \chi, \zeta \}^{\mathbb{N}}$. By our assumption and construction, in particular, since $\omega$ is periodic and since $\chi^{\infty} \in \Omega^{-}(\chi^{\infty},\zeta^{\infty})$, we have $\omega = \chi^\infty$. By way of contradiction, suppose that $\nu \neq \zeta^\infty$.  In which case, there exists $n \in \mathbb{N}_{0}$ such that $\sigma^{n}(\nu)\vert_{\lvert \chi \rvert + \lvert \zeta \rvert} = \chi \zeta$. Noting that  $\chi\vert_{2} = 01$ and  $\zeta\vert_{2} = 10$, this yields $\sigma(\omega) \prec \sigma^{n+1}(\nu)$ contradicting the fact that $\omega$ and $\nu$ satisfy Condition~(2) of \Cref{thm:BSV14}.
\end{proof}

\section{A Krieger embedding theorem for intermediate \texorpdfstring{$\beta$}{beta}-transformations: Proof of \texorpdfstring{\Cref{Cor_1}}{Corollary 1.2}}\label{sec:proof_cor_1_2}

To prove \Cref{Cor_1} we first show the following special case.

    \begin{theorem}\label{thm:one_perioidc}
        Let $(\beta,\alpha)\in\Delta$ be such that $\nu = \tau^{+}_{\beta, \alpha}(p_{\beta,\alpha})$ is not periodic and $\omega = \tau^{-}_{\beta, \alpha}(p_{\beta,\alpha})$ is periodic. There exists a sequence $((\beta_{n}, \alpha_{n}))_{n\in \mathbb{N}}$ in $\Delta$  with $\lim_{n\to \infty}\beta_{n} = \beta$ and $\lim_{n \to \infty}\alpha_{n} = \alpha$ and such that 
            \begin{enumerate}[label={\rm(\roman*)}]
                \item[\rm (1)] $\Omega_{\beta_{n},\alpha_n}$ is a subshift of finite type,
                \item[\rm (2)] the Hausdorff distance between $\Omega_{\beta, \alpha}$ and $\Omega_{\beta_{n}, \alpha_{n}}$ converges to zero as $n$ tends to infinity, and
                \item[\rm (3)] $\Omega_{\beta_{n},\alpha_n}\subseteq\Omega_{\beta,\alpha}$.
            \end{enumerate}
\end{theorem}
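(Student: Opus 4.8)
The plan is to leave the already-periodic lower kneading invariant $\omega = \tau^{-}_{\beta,\alpha}(p_{\beta,\alpha})$ fixed and to approximate the aperiodic upper kneading invariant $\nu = \tau^{+}_{\beta,\alpha}(p_{\beta,\alpha})$ from above by \emph{periodic} words. Using $T_{\beta,\alpha}(p_{\beta,\alpha}) = 0$ and $\lim_{y \nearrow p_{\beta,\alpha}} T_{\beta,\alpha}^{-}(y) = 1$ one has $\sigma(\nu) = \tau^{+}_{\beta,\alpha}(0)$ and $\sigma(\omega) = \tau^{-}_{\beta,\alpha}(1)$, so by \Cref{thm:Structure} the shift $\Omega_{\beta,\alpha}$ is determined by the region $R(\omega,\nu) \coloneqq [\sigma(\nu),\omega) \cup [\nu,\sigma(\omega))$. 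I would seek periodic words $\nu_{n}$ with $\nu \preceq \nu_{n}$ and $\nu_{n} \to \nu$ in the word metric, such that $(\omega,\nu_{n})$ is a valid kneading pair in the sense of \Cref{thm:BSV14}; writing $(\beta_{n},\alpha_{n}) \in \Delta$ for the pair it realises, the inequality $\nu \preceq \nu_{n}$ yields $[\nu_{n},\sigma(\omega)) \subseteq [\nu,\sigma(\omega))$ and $\sigma(\nu) \preceq \sigma(\nu_{n})$, hence $R(\omega,\nu_{n}) \subseteq R(\omega,\nu)$ and therefore $\Omega_{\beta_{n},\alpha_{n}} \subseteq \Omega_{\beta,\alpha}$, which is Part~(3); and since $\omega$ and $\nu_{n}$ are both periodic, \Cref{thm:LSSS} gives that $\Omega_{\beta_{n},\alpha_{n}}$ is of finite type, which is Part~(1). (One may assume $\alpha \in (0, 2-\beta)$: at the endpoint $\alpha = 0$ the hypothesis forces $\tau^{-}_{\beta,0}(1)$ to be periodic, so $\Omega_{\beta,0}$ is already of finite type by Parry's theorem \cite{P1960} and the constant sequence works, while the endpoint $\alpha = 2-\beta$ reduces to this via the reflection map $R$.)

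It remains to build the $\nu_{n}$. Since $\sigma^{k}(\nu) \prec \sigma(\omega)$ for all $k$, the word $\nu$ contains infinitely many $0$'s; I would take an increasing sequence of positions $q_{n}$ (subject to the constraint of the next paragraph) with $\nu_{q_{n}} = 0$ and set $\nu_{n} \coloneqq (\nu_{1} \cdots \nu_{q_{n}-1}\,1)^{\infty}$. This is periodic, it first disagrees with $\nu$ at position $q_{n}$ where it carries a $1$ (so $\nu \prec \nu_{n}$), and $\nu_{n}\vert_{q_{n}-1} = \nu\vert_{q_{n}-1}$ (so $\nu_{n} \to \nu$). Conditions~(1)--(3) of \Cref{thm:BSV14} for $(\omega,\nu_{n})$ are then routine for $q_{n}$ large: (1) is immediate; (2) follows from the corresponding inequalities for the valid pair $(\omega,\nu)$, using that $\omega$ has only finitely many distinct shifts, each of which disagrees with both $\nu$ and $\sigma(\nu)$ at a bounded position by aperiodicity of $\nu$, so every strict lexicographic comparison survives once $q_{n}$ exceeds that bound; and (3) holds because $\Omega^{+}(\omega,\nu_{n})$ contains the subshift of $\Omega^{+}_{\beta,\alpha}$ obtained by forbidding the two words $\nu\vert_{q_{n}-1}$ and $\sigma(\nu)\vert_{q_{n}-2}$, and forbidding two long words from a positive-entropy $\beta$-shift lowers its entropy only by an exponentially small amount. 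Given all four conditions, \Cref{thm:BSV14} produces $(\beta_{n},\alpha_{n}) \in \Delta$ with $\tau^{-}_{\beta_{n},\alpha_{n}}(p_{\beta_{n},\alpha_{n}}) = \omega$ and $\tau^{+}_{\beta_{n},\alpha_{n}}(p_{\beta_{n},\alpha_{n}}) = \nu_{n}$. Convergence $(\beta_{n},\alpha_{n}) \to (\beta,\alpha)$ follows since, by \Cref{thm:Laurent}, $\beta_{n}$ is the maximal real root of $z \mapsto \sum_{k \in \mathbb{N}}((\nu_{n})_{k} - \omega_{k})\,z^{-k}$, whose coefficient sequence agrees with that for $\nu$ on a prefix of length $q_{n}-1$, and $\alpha_{n}$ is recovered continuously from $\pi_{\beta_{n},\alpha_{n}}(\nu_{n}) = p_{\beta_{n},\alpha_{n}}$; and since the defining kneading data of $\Omega_{\beta_{n},\alpha_{n}}$ and of $\Omega_{\beta,\alpha}$ agree on all prefixes of length $q_{n}-2$, one gets $\Omega_{\beta_{n},\alpha_{n}}\vert_{m} = \Omega_{\beta,\alpha}\vert_{m}$ for $m \le q_{n}-2$, which forces the Hausdorff distance between the two shifts to tend to $0$ — Part~(2).

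The crux, and the step I expect to be the main obstacle, is Condition~(4) of \Cref{thm:BSV14}: the clause asserting that if $\omega$ and $\nu_{n}$ can both be written as concatenations of two blocks $\xi,\zeta$ of length at least three with $\xi_{1}\xi_{2} = 01$, $\zeta_{1}\zeta_{2} = 10$, $\xi^{\infty} \in \Omega^{-}(\xi^{\infty},\zeta^{\infty})$ and $\zeta^{\infty} \in \Omega^{+}(\xi^{\infty},\zeta^{\infty})$, then necessarily $\omega = \xi^{\infty}$ and $\nu_{n} = \zeta^{\infty}$. This is exactly the mechanism responsible for the failure of the converse in \Cref{thm:main}, so it cannot be sidestepped. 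The key observation is that, because $(\omega,\nu)$ is itself a valid kneading pair with $\omega$ periodic and $\nu$ aperiodic, $\nu$ cannot be a concatenation of blocks drawn from $\{\,\omega\vert_{M'},\,\zeta'\,\}$ for any period $M'$ of $\omega$ and any compatible block $\zeta'$: were it so, then $\omega = (\omega\vert_{M'})^{\infty}$ together with the aperiodicity of $\nu$ would make $(\omega,\nu)$ violate Condition~(4), contradicting the existence of $\Omega_{\beta,\alpha}$. Hence $\nu$ has a finite prefix that is incompatible with every such block decomposition; choosing $q_{n}$ beyond this prefix, one checks — a finite combinatorial verification about the closing-up letter in $\nu\vert_{q_{n}-1}\,1$ — that $\nu_{n}$ inherits the incompatibility, so Condition~(4) holds vacuously for $(\omega,\nu_{n})$. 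The genuinely delicate case, which I expect to require a modified construction, is when $\nu$ is eventually periodic but not periodic: there the naive closing-up may fail to yield a periodic word that is simultaneously $\succ \nu$ and arbitrarily close to $\nu$, and one must instead insert a sufficiently high power of the eventual period before raising a letter, re-running the verifications of Conditions~(2) and~(4) for that variant.
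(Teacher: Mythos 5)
Your strategy---keep the periodic lower kneading invariant $\omega$ fixed and push the aperiodic upper one $\nu$ up to a nearby periodic word, then invoke \Cref{thm:BSV14} and \Cref{thm:LSSS}---is structurally parallel to the paper's proof, which fixes $\beta'$ with $\tau^{-}_{\beta',0}(1)=\sigma(\omega)$ and pushes the hole parameter $t$ (with $\tau^{+}_{\beta',0}(t)=\sigma(\nu)$) up to nearby periodic points of $E^{+}_{\beta',0}$ via a Lyndon-interval description of the bifurcation set and the lemmas of \cite{KKLL}. The difference is where the combinatorial work happens, and in your version it is missing: the step you call routine, Condition~(2) of \Cref{thm:BSV14} for the pair $(\omega,\nu_{n})$, is exactly where the construction breaks. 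You verify only the half $\omega\in\Omega^{-}(\omega,\nu_{n})$, which is indeed easy ($\omega$ has finitely many shifts, each strict comparison against $\nu$ or $\sigma(\nu)$ is certified at a bounded position). You never verify $\nu_{n}\in\Omega^{+}(\omega,\nu_{n})$, and that half can fail for your choice of cut points. The lower bounds $\sigma^{j}(\nu_{n})\succeq\nu_{n}$ (resp.\ $\succeq\sigma(\nu_{n})$) do survive, because raising the letter at position $q_{n}$ from $0$ to $1$ only increases each shift at the raise point; the problem is the \emph{upper} bounds $\sigma^{j}(\nu_{n})\prec\omega$ (resp.\ $\prec\sigma(\omega)$). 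If for some $j<q_{n}$ the word $\sigma^{j}(\nu)$ agrees with $\sigma(\omega)$ (or with $\omega$) on more than $q_{n}-j$ letters --- that is, if the cut position $q_{n}$ lies strictly inside a window where $\nu$ shadows the periodic word $\omega$ --- then $\sigma^{j}(\nu_{n})$ carries a $1$ at position $q_{n}-j$ where the bound carries a $0$, whence $\sigma^{j}(\nu_{n})\succ\sigma(\omega)$ and admissibility fails. Concretely, with $\sigma(\omega)=(10)^{\infty}$ and $\nu=100(10)^{k}0\cdots$, closing up at any $0$ inside the block $(10)^{k}$ gives $\sigma^{3}(\nu_{n})\succ(10)^{\infty}$.

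So ``any sufficiently large $q_{n}$ with $\nu_{q_{n}}=0$'' is not an admissible recipe: the cut must be made only at positions where every shadowing of (a shift of) the periodic word $\omega$ by $\nu$ has already terminated, and one must prove that such positions occur at arbitrarily large indices and that the resulting periodic word passes \emph{all} the comparisons, including those created by the new periodic repetition of $\nu_{n}$ past position $q_{n}$. That existence statement is precisely the combinatorial core of the theorem --- it is what the paper establishes by showing that $t$ is not isolated from above in $E^{+}_{\beta',0}$, using the identity $E_{\beta',0}^{+}\cap(0,y)=(0,y)\setminus\bigcup_{s\in L_{\beta}}I_{s}$ over Lyndon words $s$ together with \cite[Lemmata~3.4 and~3.5]{KKLL} --- and it is absent from your argument. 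Combined with your own caveats (Condition~(4) is only sketched, and the eventually periodic case is conceded to need a modified construction), the proposal as written leaves the main step of the proof open, even though the overall plan is sound and could be completed along the lines the paper takes.
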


\begin{proof}
We prove this using \Cref{thm:main} and the results of \cite{KKLL}. By \Cref{thm:main}, there exist $\beta' \in (1, 2)$ and $t \in E_{\beta',0}^+$ such that $\mathcal{K}_{\beta',0}^+(t)=\Omega^+_{\beta,\alpha}$ with $\tau_{\beta',0}^{+}(t) = \sigma(\nu)$. Our goal is to find a monotonically decreasing sequence $(t_i)_{i \in \mathbb{N}}$ converging to $t$ with $t_i\in E_{\beta',0}^+$ and $t_i$ is a $T_{\beta', 0}^+$-periodic point for all $i \in \mathbb{N}$. We will first prove that $t$ is not isolated from above. For this we use the following. A finite word $s \in \{0, 1\}^{*}$ is Lyndon if $s^\infty \prec \sigma^n(s^\infty)$ for all $n \in \mathbb{N}$ with $n \neq 0 \bmod \lvert s \rvert$, and set $L_{\beta} \coloneqq \{ s\in \{0, 1\}^* \colon s \; \text{is a Lyndon word and} \; s^\infty \in \Omega_{\beta', 0} \}$.

For $s \in L_{\beta}$, let $I_{s}$ denote the half-open interval $[\pi_{\beta',0}(s0^\infty), \pi_{\beta',0}(s^\infty))$. \Cref{thm:Structure} in combination with our hypothesis that $\tau_{\beta,\alpha}^{-}(p_{\beta,\alpha}) = 0\tau_{\beta,\alpha}^{-}(1)$ is periodic, yields there exists a shortest finite word $\zeta$ with $\tau_{\beta,\alpha}^{-}(1) = \zeta^\infty$.  Letting $n$ be the length of $\zeta$, we set $\zeta^\prime$ to be the lexicographical smallest element of the set $\{ \zeta_{k} \cdots \zeta_{n} \zeta_{1} \cdots \zeta_{k-1} \colon k \in \{2, \ldots, n \}\}$, and set $y = \pi_{\beta',0}(\zeta^\prime 0^\infty)$.  By construction $\zeta^{\prime}$ is a Lyndon. Since by our hypothesis $\nu = \tau^{+}_{\beta, \alpha}(p_{\beta,\alpha})$ is not periodic and since $t < \pi_{\beta',0}({\zeta^{\prime}}^{\infty})$, we observe that $t < y$.

For $s \in L_{\beta}$, by the Lyndon property of $s$, if $x \in I_s$, then $x \not\in E_{\beta',0}^{+}$, which implies $E_{\beta',0}^{+} \cap (0,y) \subseteq (0,y) \backslash \bigcup_{s\in L_\beta} I_s$.  In fact we claim $(0,y) \backslash \bigcup_{s\in L_\beta} I_s = E_{\beta',0}^+ \cap (0,y)$. In order to prove this, let $x\in (0,y) \backslash \bigcup_{s\in L_\beta} I_s$ and suppose $x \notin E_{\beta',0}^+$.  Under this hypothesis, there exists a minimal $n \in \mathbb{N}$ such that $\sigma^{n}(\tau_{\beta', 0}^{+}(x)) \prec \tau_{\beta', 0}^{+}(x)$.  By the minimality of $n$, we have that $\xi = \tau_{\beta', 0}^{+}(x)\vert_{n}$ is a Lyndon word, and that $\tau_{\beta', 0}^{+}(x) \prec \xi^{\infty}$. If $\xi^\infty \not\in \Sigma_{\beta^\prime,0}$, then there exists $j \in \{ 1, 2, \ldots, n \}$ such that $\tau_{\beta',0}^{-}(1)\prec\sigma^j(\xi^\infty)$ where equality is excluded since $x<y$. Set $k = \tau_\beta^{-}(1) \wedge \sigma^j(\xi^\infty)$, and notice $k>n-j$; otherwise $\tau_{\beta', 0}^{+}(x)$ would not be admissible. This yields $\tau_{\beta',0}^{-}(1)=\xi_{j+1}\xi_{j+2}\cdots \xi_n (\xi_1\cdots \xi_n)^l \omega_1 \omega_2 \cdots$ with $l$ possibly $0$ but chosen so that $\omega_1 \cdots \omega_n \neq \xi_1 \cdots \xi_n$; note this is possible since $\nu$ is not periodic. Thus, $\sigma^{n-j+ln}(\tau_{\beta',0}^{-}(1)) \prec \sigma^{n-j+ln}(\sigma^{j}(\xi^\infty))=\xi^\infty$ and $\omega_1\cdots \omega_n \prec \xi$.  Hence, $\sigma^{n-j+ln}(\tau_{\beta',0}^{-}(1)) \prec \xi 0^\infty \prec \tau_{\beta',0}^{-}(x)$, contradicting the fact that we choose $x\in (0,y) \backslash \bigcup_{s\in L_\beta} I_s$. It therefore follows that $E_{\beta',0}^{+} \cap (0,y) = (0,y) \backslash \bigcup_{s\in L_\beta} I_s$ as required.

Suppose that $t$ cannot be approximated from above by elements in $E_{\beta',0}^+$, that is, there exists a real number $\epsilon > 0$ with $(t,t+\epsilon)\cap E_{\beta',0}^+=\emptyset$. Since $E_{\beta',0}^+ \cap (0,y) = (0,y) \backslash \bigcup_{s\in L_\beta} I_s$, there exists a Lyndon word $s$ with $(t,t+\epsilon) \subset I_s$, but as $I_s$ is closed from the left, $t\in I_s$, contradicting our hypothesis that $t\in E_{\beta',0}^{+}$. This implies $t$ can be approximated from above by elements in $E_{\beta',0}^{+}$, namely there exists a monotonically decreasing sequence $(t_i^\prime)_{i\in \mathbb{N}}$ of real numbers converging to $t$ with $t_i^\prime \in E_{\beta',0}^+$, for all $i \in \mathbb{N}$. If $t_i^\prime$ is not $T_{\beta',0}$-periodic for some $i \in \mathbb{}N$, then by \cite[Lemmanta~3.4 and~3.5]{KKLL}, there exists a monotonically increasing sequence of $T_{\beta',0}$-periodic points $(s_{i, j}^\prime)_{j \in \mathbb{N}}$ converging to $t_i^\prime$ with $s_{i, j} \in E_{\beta',0}^+$. For $i \in \mathbb{N}$, setting $t_i=t_i^\prime$ whenever $t_i^\prime$ is $T_{\beta',0}$-periodic, and otherwise setting $t_i=s_{i,j}^\prime$ where $s_{i,j}^\prime$ is chosen so that  $t < s_{i,j}^\prime < t_i^\prime$, the sequence $(t_i)_{i \in \mathbb{N}}$ converges to $t$ from above and $t_i$ is $T_{\beta',0}$-periodic.

Since $\omega$ is periodic with respect to the left shift map, \Cref{thm:main} implies, for each $i \in \mathbb{N}$, there exists $(\beta_{i}, \alpha_{i}) \in \Delta$ with $K_{\beta'0}^+(t_i)=\Omega^+_{\beta_{i},\alpha_i}$. Since both $\omega$ and $\tau^{+}_{\beta',0}(t_i)$ are periodic, \Cref{thm:LSSS} yields that $\Omega_{\beta_{i},\alpha_i}$ is of subshift of finite type.  Further, since $\mathcal{K}_{\beta',0}^{+}(t_i) \subseteq \mathcal{K}^{+}_{\beta',0}(t)$, it follows that $\Omega_{\beta_{i},\alpha_{i}} \subseteq \Omega_{\beta,\alpha}$ for all $i \in \mathbb{N}$.
\end{proof}

\begin{proof}[{Proof of \Cref{Cor_1}}]
Assume the setting of \Cref{Cor_1} and for ease of notation set $p = p_{\beta, \alpha}$, $\nu = \tau_{\beta, \alpha}^{+}(p)$ and $\omega = \tau_{\beta, \alpha}^{-}(p)$. By \Cref{thm:LSSS}, we have that $\Omega_{\beta, \alpha}$ is a subshift of finite type if and only if $\omega$ and $\nu$ are periodic. Since the subshift of finite type property is preserved by topological conjugation, and observing that $\Omega^{\pm}_{\beta, \alpha}$ and $\Omega^{\mp}_{\beta, 2-\beta-\alpha}$ are topologically conjugate, with conjugation map $R$, with out loss of generality we may assume that $\nu$ is not periodic. We consider the case, when $\omega$ is periodic and when $\omega$ is not periodic separately.  The former of these two cases follows from \Cref{thm:one_perioidc}, and so all that remains is to show the result for the latter case, namely when $\omega$ is not periodic. To this end, assume that $\omega$ and $\nu$ are both not periodic. Let $n \in \mathbb{N}$ be fixed, set $O_{n}^{\pm}(p) = \{ (T_{\beta, \alpha}^{\pm})^{k}(p) \colon k \in \{ 0, 1, \ldots, n-1 \} \}$, and let $\beta' \in (1, \beta)$ be such that
   \begin{align}\label{eq:def_beta_prime}
        (1-\alpha)/\beta' + (\beta+1)^{n}(\beta - \beta') < \min \{ x \in O_{n}^{+}(p) \cup O_{n}^{-}(p) \colon x > p \}.
    \end{align}
(As defined in \Cref{sec:beta-shifts}, we let $T_{\beta,\alpha}^{-} \colon x \mapsto \beta x + \alpha$ if $x \leq p$, and $x \mapsto \beta x + \alpha - 1$ otherwise, and for ease of notation, we write $T_{\beta,\alpha}^{+}$ for $T_{\beta, \alpha}$.)  Setting $p' = (1-\alpha)/\beta'$, we claim, for all $k \in \{ 1, \ldots, n-1 \}$, that either
    \begin{align}\label{eq:desired_inequalities}
        (T_{\beta', \alpha}^{\pm})^{k}(p') \leq (T_{\beta, \alpha}^{\pm})^{k}(p) \leq p \leq p'
        \quad \text{or} \quad
        (T_{\beta, \alpha}^{\pm})^{k}(p) \geq (T_{\beta', \alpha}^{\pm})^{k}(p') \geq p' \geq p.
    \end{align}
Hence, by definition and since $\omega$ and $\nu$ are not periodic, $\omega\vert_{n} = \tau_{\beta', \alpha}^{-}(p)\vert_{n}$ and $\nu\vert_{n} = \tau_{\beta', \alpha}^{+}(p)\vert_{n}$.

To prove this claim, note, for all $k \in \{ 1, \ldots, n-1 \}$, either 
    \begin{align}\label{eq:either_or}
        (T_{\beta, \alpha}^{\pm}(p))^{k} < p
        \quad \text{or} \quad 
        (T_{\beta, \alpha}^{\pm})^{k}(p) \geq  \min \{ x \in O_{n}^{+}(p) \cup O_{n}^{-}(p) \colon x > p \}.
    \end{align}
If $0 \leq y \leq x \leq p$, or if $p' \leq y \leq x \leq 1$, then 
    \begin{align}\label{eq:orbit_bound}
        0 \leq T^{\pm}_{\beta, \alpha}(x) - T^{\pm}_{\beta', \alpha}(y) = \beta x - \beta'y = \beta x - \beta y + \beta y - \beta'y \leq \beta(x-y) + (\beta-\beta').
    \end{align}
Observe that $T^{\pm}_{\beta, \alpha}(p) = T^{\pm}_{\beta', \alpha}(p')$ and
    \begin{align*}
    0 \leq (T^{\pm}_{\beta, \alpha})^{2}(p) - (T^{\pm}_{\beta', \alpha})^{2}(p')
    \leq \beta - \beta'
    \leq (\beta + 1)(\beta - \beta')
    \leq (\beta + 1)^{2}(\beta - \beta')
    \leq (\beta+1)^{n}(\beta - \beta').
    \end{align*}
Suppose, by way of induction on $m$, that 
    \begin{align*}
        0 
        \leq (T^{\pm}_{\beta, \alpha})^{m}(p) - (T^{\pm}_{\beta', \alpha})^{m}(p')
        \leq (\beta+1)^{m}(\beta-\beta')
        \leq (\beta+1)^{n}(\beta-\beta'),
    \end{align*}
for some $m \in \{2, \dots, n-2\}$. Combining \eqref{eq:def_beta_prime}, \eqref{eq:either_or} and \eqref{eq:orbit_bound} with our inductive hypothesis, we have
    \begin{align*}
        0 \leq (T^{\pm}_{\beta, \alpha})^{m+1}(p) - (T^{\pm}_{\beta', \alpha})^{m+1}(p') 
        &\leq \beta((T^{\pm}_{\beta, \alpha})^{m}{p} - (T^{\pm}_{\beta', \alpha})^{m}(p')) + (\beta-\beta')\\
        &\leq \beta(\beta+1)^{m}(\beta-\beta') + (\beta-\beta')
        \leq (\beta+1)^{m+1}(\beta-\beta')
        \leq (\beta+1)^{n}(\beta-\beta').
    \end{align*}
In other words, for all $k \in \{ 1, \ldots, n-1 \}$,
    \begin{align*}
        0 \leq (T^{\pm}_{\beta, \alpha})^{k}(p) - (T^{\pm}_{\beta', \alpha})^{k}(p') 
        \leq (\beta+1)^{k}(\beta-\beta')
        \leq (\beta+1)^{n}(\beta-\beta').
    \end{align*}
This in tandem with \eqref{eq:def_beta_prime} and \eqref{eq:either_or} proves the claim.

We observe that $(\omega, \nu) \neq (\tau_{\beta', \alpha}^{+}(p'), \tau_{\beta', \alpha}^{-}(p'))$, for if not, then since $\beta' < \beta$, this would contradict \Cref{thm:Laurent}. This implies that $\omega \neq \tau_{\beta', \alpha}^{-}(p')$ or $\nu \neq \tau_{\beta, \alpha}^{+}(p')$.  We claim that $\omega \succ \tau_{\beta', \alpha}^{-}(p')$ and $\nu \succ \tau_{\beta, \alpha}^{+}(p')$.

Consider the case when $\omega \neq \tau_{\beta', \alpha}^{-}(p')$. This implies there exists a smallest integer $m \geq n$ such that neither
   \begin{align*}
        (T_{\beta', \alpha}^{-})^{m}(p') \leq (T_{\beta, \alpha}^{-})^{m}(p) \leq p
        \quad \text{nor} \quad
        (T_{\beta, \alpha}^{-})^{m}(p) \geq (T_{\beta', \alpha}^{-})^{m}(p') \geq p'.
    \end{align*}
Using the fact that if $0 \leq y \leq x < p$ or if $p' < y \leq x \leq 1$, then $T^{-}_{\beta', \alpha}(y) \leq T^{-}_{\beta, \alpha}(x)$, in tandem with \eqref{eq:desired_inequalities}, and noting that $p<p'$, we have that
    \begin{align*}
        \tau_{\beta', \alpha}^{-}(p')\vert_{m-2}=\omega\vert_{m-2},
        \quad
        (T^{-}_{\beta',\alpha})^{m}(p')<p',
        \quad
        (T^{-}_{\beta,\alpha})^{m}(p) > p
        \quad \text{and} \quad
        (T^{-}_{\beta',\alpha})^{m}(p')\leq (T^{-}_{\beta,\alpha})^{m}(p).
    \end{align*}
Thus, $\tau_{\beta', \alpha}^{-}(p')\vert_{m-1} \prec \omega\vert_{m-1}$ and hence $\tau_{\beta', \alpha}^{-}(p') \prec \omega$. An analogous argument proves the claim when $\nu \neq \tau_{\beta, \alpha}^{+}(p')$.

Hence, we have shown, given an $n \in \mathbb{N}$, that there exists a positive $\delta \in \mathbb{R}$, such that, for all $\beta' \in (\beta-\delta, \beta)$,
    \begin{align}\label{smaller}
        \tau_{\beta', \alpha}^{\pm}(p')\vert_{n} = \tau_{\beta, \alpha}^{\pm}(p)\vert_{n}
        \quad \text{and} \quad
        \tau_{\beta', \alpha}^{\pm}(p') \prec \tau_{\beta, \alpha}^{\pm}(p),
    \end{align}
where $p'=(1-\alpha)/\beta'$.  Further, by using the fact that $\Omega^{\pm}_{\beta, \alpha}$ and $\Omega^{\mp}_{\beta, 2-\beta-\alpha}$ are topologically conjugate, with conjugating map $R$, together with \eqref{smaller}, we have that there exists a positive $\delta' \in \mathbb{R}$, such that, for all $\beta' \in (\beta-\delta', \beta)$,
    \begin{align*}
        \tau_{\beta', \alpha+\beta-\beta'}^{\pm}(p_{\beta', \alpha+\beta-\beta'})\vert_{n} = \tau_{\beta, \alpha}^{\pm}(p)\vert_{n}
        \quad \text{and} \quad
        \tau_{\beta', \alpha+\beta-\beta'}^{\pm}(p_{\beta', \alpha+\beta-\beta'}) \succ \tau_{\beta, \alpha}^{\pm}(p).
    \end{align*}
Letting $\beta' \in (\beta-\min(\delta,\delta'), \beta)$ be fixed and setting
    \begin{align*}
        q_{1} = \sup \{ a \in (\alpha, \alpha+\beta-\beta') \colon \tau_{\beta', a}^{\pm}(p_{\beta', a}) \preceq \tau_{\beta, \alpha}^{\pm}(p)\}
        \quad \text{and} \quad
        q_{2} = \inf \{ a \in (\alpha, \alpha+\beta-\beta') \colon \tau_{\beta', a}^{\pm}(p_{\beta', a}) \succeq \tau_{\beta, \alpha}^{\pm}(p)\},
    \end{align*}
by \Cref{prop:mon_cont_kneading}, we have $\alpha \leq q_{1} \leq q_{2} \leq \alpha+\beta-\beta'$ and $\tau_{\beta', a}^{\pm}(p_{\beta',a})\vert_{n} = \tau_{\beta, \alpha}^{\pm}(p)\vert_{n}$, for all $a \in [q_{1}, q_{2}]$.  Moreover, $\tau_{\beta', a}^{-}(p_{\beta',a}) \preceq \tau_{\beta, \alpha}^{-}(p) \prec \tau_{\beta, \alpha}^{+}(p) \preceq \tau_{\beta', a}^{+}(p_{\beta',a})$, for all $a \in [q_{1}, q_{2}]$, implying one of the following sets of orderings.
    \begin{align*}
        \tau_{\beta', a}^{-}(p_{\beta',a}) \prec \tau_{\beta, \alpha}^{-}(p) &\prec \tau_{\beta, \alpha}^{+}(p) \prec \tau_{\beta', a}^{+}(p_{\beta', a})\\
        \tau_{\beta', a}^{-}(p_{\beta',a}) = \tau_{\beta, \alpha}^{-}(p) &\prec \tau_{\beta, \alpha}^{+}(p) \prec \tau_{\beta', a}^{+}(p_{\beta', a})\\
        \tau_{\beta', a}^{-}(p_{\beta',a}) \prec \tau_{\beta, \alpha}^{-}(p) &\prec \tau_{\beta, \alpha}^{+}(p) = \tau_{\beta', a}^{+}(p_{\beta', a})
    \end{align*}
If either the first case occurs, the second case occurs and $\tau_{\beta', a}^{+}(p_{\beta',a})$ is not periodic, or the third  case occurs and $\tau_{\beta', a}^{-}(p_{\beta',a})$ is not periodic, then an application of \Cref{prop:mon_cont_kneading} and \Cref{thm:LSSS} yields the required result. 

This leaves two remaining sub-cases, namely when $\tau_{\beta', a}^{-}(p_{\beta',a}) = \tau_{\beta, \alpha}^{-}(p) \prec \tau_{\beta, \alpha}^{+}(p) \prec \tau_{\beta', a}^{+}(p_{\beta', a})$ with $\tau_{\beta', a}^{+}(p_{\beta', a})$ periodic, and when $\tau_{\beta', a}^{-}(p_{\beta',a}) \prec \tau_{\beta, \alpha}^{-}(p) \prec \tau_{\beta, \alpha}^{+}(p) = \tau_{\beta', a}^{+}(p_{\beta', a})$ with $\tau_{\beta', a}^{-}(p_{\beta', a})$ periodic. Let us consider the first of these two sub-cases; the second follows by an analogous arguments.

For ease of notation let $\nu' = \tau_{\beta', a}^{+}(p_{\beta',a})$ and note that by assumption $\omega = \tau_{\beta', a}^{-}(p_{\beta',a})$ and that $\omega \prec \nu \prec \nu'$.  If the map $s \mapsto \tau_{\beta', s}^{+}(p_{\beta',s})$ is continuous at $s = a$, then an application of \Cref{prop:mon_cont_kneading} and \Cref{thm:LSSS} yields the required result; if we do not have continuity at $s = a$, by \Cref{prop:mon_cont_kneading} we have that $\nu'$ is periodic with periodic $N$, for some $N \in \mathbb{N}$, and thus an application of \Cref{thm:one_perioidc} completes the proof, alternatively we may proceed as follows.

We claim that $\nu \prec \nu'\vert_{N}\omega$. Indeed if $\nu\vert_{N} \prec \nu'\vert_{N}$, the claim follows immediately, and so let us suppose that $\nu\vert_{N} = \nu'\vert_{N}$.  If $\nu_{N+1} = 0$, the claim follows, from \Cref{thm:Structure}.  On the other hand, by \Cref{thm:Structure}, if $\nu_{N+1} = 1$, then $\sigma^{N}(\nu) \succeq \nu$.  If $\sigma^{N}(\nu)\vert_{N} \succ \nu\vert_{N} = \nu'\vert_{N}$, then $\nu \succ \nu'$, contradicting our assumption that $\nu \prec \nu'$, and so  $\sigma^{N}(\nu)\vert_{N} = \nu\vert_{N}$. This implies there exists a minimal integer $m$ such that $\nu_{m N + 1} = 0$ and $\nu\vert_{mN} = \nu'\vert_{mN}$; otherwise $\nu$ would be periodic. However, this together with \Cref{thm:Structure}, yields that $\nu \preceq \sigma^{(m-1)N}(\nu) = \nu\vert_{N}\sigma^{mN}(\nu) \preceq \nu\vert_{N}\omega = \nu'\vert_{N}\omega$, as required. 

To complete the proof of this sub-case we appeal once more to \Cref{prop:mon_cont_kneading} which together with the above implies that there exists a real number $\delta > 0$ such that for all $a' \in (a-\delta,a)$ we have $\tau_{\beta',a'}^{-}(p_{\beta',a'}) \prec \omega$ and $\nu \prec \tau_{\beta',a'}^{+}(p_{\beta',a'}) \prec \nu'\vert_{N}\omega \prec \nu'$. An application of \Cref{thm:LSSS} yields the required result.
\end{proof}

In the above proof, it is critical that $\omega$ and $\nu$ are not periodic, as this allows us to construct $\beta'$, $q_{1}$ and $q_{2}$ so that $\omega$ and $\nu$ are sufficiently close to $\tau_{\beta',a'}^{-}(p_{\beta',a'})$ and $\tau_{\beta',a'}^{+}(p_{\beta',a'})$, respectively, for all $a' \in [q_{1}, q_{2}]$. However, under the assumption that $\nu$ is periodic we may not use our construction to build such $\beta'$ and hence $q_{1}$ and $q_{2}$.  Indeed, the strict inequalities in \Cref{eq:either_or} no longer hold, and thus the ordering given in \eqref{smaller} fails.

\section{Survivor sets of intermediate \texorpdfstring{$\beta$}{beta}-transformations: Proof of \texorpdfstring{\Cref{Cor_2,Cor_3}}{Corollaries 1.3 and 1.4}}\label{sec:proof_cor_1_3_4}

Here, we examine open dynamical systems on the unit interval with a hole at zero and where the dynamics is driven by an intermediate \mbox{$\beta$-transformation}. With the aid of \Cref{thm:main} we can relate such open dynamical system to open dynamical systems driven by greedy \mbox{$\beta$-transformations}. This allows us to transfer the results of \cite{KKLL} and \cite{AK} on isolated points in $E_{\beta,\alpha}^+$, the Hausdorff dimension of survivor sets, and the critical point of the dimension function from the Greedy case to the intermediate case. For readability, we omit the $0$ in notation of $\pi_{\beta,0}$, $E_{\beta}^+=E_{\beta,0}^+$, and so on, and thus write $\pi_{\beta}$ for $\pi_{\beta,0}$, $E_{\beta}^+$ for $E_{\beta,0}^+$, and so forth.

By \Cref{thm:Parry_converse,thm:Structure}, given $(\beta,\alpha) \in \Delta$, there exists a unique $\beta^\prime \in (1, 2)$ with $\tau_{\beta,\alpha}^-(1)=\tau_{\beta^\prime}^-(1)$. Thus, we define a function $u \colon \Delta \to (1, 2)$ by $u(\beta,\alpha) \coloneqq \beta^\prime$, and let $\tilde{\pi}_{\beta,\alpha} \coloneqq \pi_{u(\beta,\alpha)} \circ \tau_{\beta,\alpha}^{+}$.  Correlations of the systems $(T_{\beta,\alpha},[0,1])$ and $(T_{u(\beta,\alpha)},K_{u(\beta,\alpha)}^+(\tilde{\pi}_{\beta,\alpha}(0)))$ are expressed in the following proposition.

\begin{proposition}\label{prop:char}
Let $(\beta,\alpha)\in\Delta$ and let $\beta^\prime=u(\beta,\alpha)$.
    \begin{enumerate}
        \item  $\tilde{\pi}_{\beta,\alpha}([0,1])=K_{\beta^\prime}^+(\tilde{\pi}_{\beta,\alpha}(0))$ and $\tilde{\pi}_{\beta,\alpha}(E_{\beta,\alpha}^+)=E_{\beta^\prime}^+\cap[\tilde{\pi}_{\beta,\alpha}(0),1]$.
        \item For every $x\in E_{\beta,\alpha}^+$ we have that $x$ is isolated in $E_{\beta,\alpha}^+$ if and only if $\tilde{\pi}_{\beta,\alpha}(x)$ is isolated in $E_{\beta^\prime}^+$.
        \item For $t\in(0,1)$, we have $ \tilde{\pi}_{\beta,\alpha}(K_{\beta,\alpha}^+(t))=K_{\beta^\prime}^+(\tilde{\pi}_{\beta,\alpha}(t)) $.
        \item For $t\in(0,1)$, we have $\dim_H(K_{\beta,\alpha}^+(t))=(\log(\beta^\prime)/\log(\beta)) \dim_H(K_{\beta^\prime}^+(\tilde{\pi}_{\beta,\alpha}(t)))$.
    \end{enumerate}
\end{proposition}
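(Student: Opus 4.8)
The plan is to exploit the fact that, by construction, $\beta' = u(\beta,\alpha)$ satisfies $\tau^{-}_{\beta,\alpha}(1) = \tau^{-}_{\beta'}(1)$, so the ``right-hand'' constraint defining admissibility is literally the same on both sides. Recall the symbolic descriptions from \Cref{sec:beta-shifts}: by \Cref{thm:Structure}, $\Omega^{+}_{\beta,\alpha}$ consists of those $\omega$ with $\tau^{+}_{\beta,\alpha}(0) \preceq \sigma^{n}(\omega) \prec \tau^{-}_{\beta,\alpha}(p_{\beta,\alpha})$ or $\tau^{+}_{\beta,\alpha}(p_{\beta,\alpha}) \preceq \sigma^{n}(\omega) \prec \tau^{-}_{\beta,\alpha}(1)$ for all $n$; running the proof of \Cref{thm:main} shows this set equals $\{\omega : \tau^{+}_{\beta,\alpha}(0) \preceq \sigma^{n}(\omega) \prec \tau^{-}_{\beta'}(1)\text{ for all }n\} = \mathcal{K}^{+}_{\beta',0}(t)$ with $\tau^{+}_{\beta',0}(t) = \sigma(\tau^{+}_{\beta,\alpha}(p_{\beta,\alpha}))$, i.e.\ $t = \tilde\pi_{\beta,\alpha}(0)$ since $\tau^{+}_{\beta,\alpha}(0)$ is the kneading-orbit tail and $\pi_{\beta'}\circ\tau^{+}_{\beta,\alpha}(0)$ unpacks to exactly this. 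Applying $\pi_{\beta'}$ and using commutativity of the left diagram in \eqref{eq:commutative_diag} (for the map $T_{\beta',0}$) together with the symbolic description of $K^{+}_{\beta'}(s)$ via $\mathcal{K}^{+}_{\beta',0}(s)$ gives part~(1), first equality. For the second equality of~(1), I would compare the symbolic set $\mathcal{E}^{+}_{\beta,\alpha}$ with $\mathcal{E}^{+}_{\beta'}$: a word $\eta$ lies in $\mathcal{E}^{+}_{\beta,\alpha}$ iff $\eta \preceq \sigma^{n}(\eta) \prec \tau^{-}_{\beta,\alpha}(1) = \tau^{-}_{\beta'}(1)$ for all $n$, which is exactly membership of $\eta$ in $\mathcal{E}^{+}_{\beta'}$, subject to the additional constraint that $\eta$ be $\tau^{+}_{\beta,\alpha}(0)$-admissible, i.e.\ that $\pi_{\beta'}(\eta) \geq \tilde\pi_{\beta,\alpha}(0)$; translating through $\pi_{\beta'}$ and $\tau^{+}_{\beta,\alpha}$ and invoking part~(1) of \Cref{thm:main} yields $\tilde\pi_{\beta,\alpha}(E^{+}_{\beta,\alpha}) = E^{+}_{\beta'} \cap [\tilde\pi_{\beta,\alpha}(0),1]$.

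Part~(2) is the place where a little care is needed: $\tilde\pi_{\beta,\alpha} = \pi_{\beta'}\circ\tau^{+}_{\beta,\alpha}$ need not be a homeomorphism — $\tau^{+}_{\beta,\alpha}$ is only right-continuous and $\pi_{\beta'}$ collapses countably many pairs — so ``isolated'' is not obviously transported. The plan is to argue directly on orderings. Since $\tau^{+}_{\beta,\alpha}$ is strictly increasing (as a map into $(\{0,1\}^{\mathbb{N}},\preceq)$) and right-continuous, and $\pi_{\beta'}$ is non-decreasing and continuous, $\tilde\pi_{\beta,\alpha}$ is non-decreasing; moreover on $E^{+}_{\beta,\alpha}$ it is injective (if $x<y$ are both in $E^{+}_{\beta,\alpha}$, then the symbolic words differ and, both being $\preceq$-minimal under shifts, the images under $\pi_{\beta'}$ are distinct — here one uses that $x\in E^{+}_{\beta,\alpha}$ forces $\tau^{+}_{\beta,\alpha}(x) = \tau^{-}_{\beta,\alpha}(x)$ except on a set that does not meet $E^{+}_{\beta,\alpha}$, a fact already used in the proof of \Cref{prop:ent}). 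Then: $x$ is non-isolated in $E^{+}_{\beta,\alpha}$ iff there is a sequence $x_k \to x$ in $E^{+}_{\beta,\alpha}$; by the above $\tilde\pi_{\beta,\alpha}(x_k)$ is a monotone sequence in $E^{+}_{\beta'}$, and one checks it converges to $\tilde\pi_{\beta,\alpha}(x)$ using right-continuity (for approach from the right) and, for approach from the left, the observation from the proof of \Cref{prop:ent} that points immediately to the left of a bifurcation value have the same kneading data — so $\tilde\pi_{\beta,\alpha}$ is continuous from the left \emph{along $E^{+}_{\beta,\alpha}$}. The converse direction uses part~(1): every element of $E^{+}_{\beta'}$ near $\tilde\pi_{\beta,\alpha}(x)$ and above $\tilde\pi_{\beta,\alpha}(0)$ is $\tilde\pi_{\beta,\alpha}$ of some element of $E^{+}_{\beta,\alpha}$, and monotonicity/injectivity lets us pull back a convergent sequence.

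Part~(3) follows the template of part~(1): a point $x$ lies in $K^{+}_{\beta,\alpha}(t)$ iff $\tau^{+}_{\beta,\alpha}(x) \in \mathcal{K}^{+}_{\beta,\alpha}(t)$, i.e.\ $\tau^{+}_{\beta,\alpha}(t) \preceq \sigma^{n}(\tau^{+}_{\beta,\alpha}(x)) \prec \tau^{-}_{\beta,\alpha}(1) = \tau^{-}_{\beta'}(1)$ for all $n$; applying $\pi_{\beta'}$ and using the commuting diagram for $T_{\beta'}$, this is exactly $\tilde\pi_{\beta,\alpha}(x) \in K^{+}_{\beta'}(\tilde\pi_{\beta,\alpha}(t))$, so $\tilde\pi_{\beta,\alpha}(K^{+}_{\beta,\alpha}(t)) = K^{+}_{\beta'}(\tilde\pi_{\beta,\alpha}(t))$. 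Finally, part~(4) is a Ledrappier--Young bookkeeping step: by \Cref{prop:ent} (equation \eqref{eq:ent}) applied in base $\beta$ and in base $\beta'$,
\[
  \dim_{H}(K^{+}_{\beta,\alpha}(t)) = \frac{h_{\operatorname{top}}(T_{\beta,\alpha}\vert_{K^{+}_{\beta,\alpha}(t)})}{\log\beta},
  \qquad
  \dim_{H}(K^{+}_{\beta'}(\tilde\pi_{\beta,\alpha}(t))) = \frac{h_{\operatorname{top}}(T_{\beta'}\vert_{K^{+}_{\beta'}(\tilde\pi_{\beta,\alpha}(t))})}{\log\beta'},
\]
and the two topological entropies coincide because, by part~(3) and the commuting diagram, the symbolic systems $\mathcal{K}^{+}_{\beta,\alpha}(t)$ and $\mathcal{K}^{+}_{\beta'}(\tilde\pi_{\beta,\alpha}(t))$ are equal as subsets of $\{0,1\}^{\mathbb{N}}$ (both are cut out by the same pair of lexicographic inequalities once one substitutes $\tau^{-}_{\beta,\alpha}(1)=\tau^{-}_{\beta'}(1)$ and $\tau^{+}_{\beta,\alpha}(t)$ for the lower kneading word), hence have identical word-complexity and thus identical entropy; dividing one displayed equation by the other gives $\dim_{H}(K^{+}_{\beta,\alpha}(t)) = (\log\beta'/\log\beta)\dim_{H}(K^{+}_{\beta'}(\tilde\pi_{\beta,\alpha}(t)))$. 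The main obstacle is part~(2): ensuring that the non-injectivity and one-sided continuity of $\tilde\pi_{\beta,\alpha}$ do not destroy the transport of the isolated-point property, which is why the argument must be run on the level of lexicographic orderings and the special structure of $E^{+}$ rather than by a naive topological conjugacy argument.
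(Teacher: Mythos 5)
Your proposal is correct and follows essentially the same route as the paper: everything is transported symbolically through $\tilde{\pi}_{\beta,\alpha}=\pi_{\beta'}\circ\tau^{+}_{\beta,\alpha}$ using the commuting diagrams in \eqref{eq:commutative_diag} and the identity $\tau^{-}_{\beta,\alpha}(1)=\tau^{-}_{\beta'}(1)$, with Part~(4) obtained by applying \Cref{prop:ent} in both bases and observing that the two symbolic survivor sets coincide, hence have equal entropy. Your treatment of Part~(2) is in fact more careful than the paper's one-line appeal to monotonicity and injectivity of $\tilde{\pi}_{\beta,\alpha}$ on $[0,1)$: the left-continuity issue you raise is real (for $\alpha>0$ the map $\tilde{\pi}_{\beta,\alpha}$ does jump at preimages of $p_{\beta,\alpha}$), and your observation that such points cannot lie in $E^{+}_{\beta,\alpha}$, so that $\tilde{\pi}_{\beta,\alpha}$ is continuous at every point of $E^{+}_{\beta,\alpha}\setminus\{0\}$, is exactly what is needed to close that gap.
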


\newpage

\begin{proof}
Let us begin by proving Part~(1). To this end, observe that $\tilde{\pi}_{\beta,\alpha}$ is monotonic, since it is a composition of monotonic functions, and so, $\tilde{\pi}_{\beta,\alpha}(T_{\beta,\alpha}^n(x)) \geq \tilde{\pi}_{\beta,\alpha}(0)$ for all $x \in [0,1]$ and $n \in \mathbb{N}_{0}$. By the fact that the diagrams in \eqref{eq:commutative_diag} commute, we have for all $x \in [0,1]$ and $n \in \mathbb{N}$, that 
    \begin{align*}
        T_{\beta'}^{n}(\tilde{\pi}_{\beta,\alpha}(x))
        = T_{\beta'}^{n}(\pi_{\beta'}(\tau^{+}_{\alpha,\beta}(x)))
        = \pi_{\beta'}(\sigma^{n}(\tau^{+}_{\alpha,\beta}(x)))
        = \pi_{\beta'}(\tau^{+}_{\alpha,\beta}(T^{n}_{\beta,\alpha}(x))
        = \tilde{\pi}_{\beta,\alpha}(T^{n}_{\beta,\alpha}(x)).
    \end{align*}
Combining the above, we may conclude that $\tilde{\pi}_{\beta,\alpha}([0,1]) \subseteq  K_{\beta^\prime}(\tilde{\pi}_{\beta,\alpha}(0))$. To prove that equality holds, namely that $\tilde{\pi}_{\beta,\alpha}([0,1]) =  K_{\beta^\prime}(\tilde{\pi}_{\beta,\alpha}(0))$, using the commutativity of the diagrams in \eqref{eq:commutative_diag}, we observe that $x \in K_{\beta^\prime}(\tilde{\pi}_{\beta,\alpha}(0))$, if and only if, $\pi_{\beta'}(\tau^{+}_{\beta,\alpha}(0)) \leq \pi_{\beta'}(\sigma^{n}(\tau^{+}_{\beta'}(x)) \leq \pi_{\beta'}(\tau_{\beta'}^{-}(1))$ for all $n \in \mathbb{N}_{0}$. Since $\pi_{\beta'}$ is injective on $\Omega_{\beta'}$ and monotonic on $\{0,1\}^{\mathbb{N}}$, and since $\tau_{\beta,\alpha}^-(1)=\tau_{\beta^\prime}^-(1)$, it follows that $\tau_{\beta'}(x) \in \Omega^{+}_{\beta, \alpha}$.  In other words, there exists a $y \in [0,1]$ such that $\tilde{\pi}_{\beta,\alpha}(y) = \pi_{\beta'}(\tau^{+}_{\beta,\alpha}(y)) = x$, yielding the first statement of Part~(1). Let us now prove the second statement. If $x \in \tilde{\pi}_{\beta,\alpha}(E_{\beta,\alpha}^{+})$, then there exists a $y \in E_{\beta,\alpha}^{+} \subset [0,1]$ with $\tilde{\pi}_{\beta,\alpha}(y) = x$. This in tandem with the fact that the the diagrams in \eqref{eq:commutative_diag} are commutative, and since the maps $\tau_{\beta,\alpha}^{+}$ and $\pi_{\beta'}$ are monotonic, we have 
    \begin{align*}
        T_{\beta'}^{n}(x)
        = T_{\beta'}^{n}( \pi_{\beta'}(\tau_{\beta,\alpha}^{+}(y)))
        = \pi_{\beta'}(\sigma^{n}(\tau_{\beta,\alpha}^{+}(y)))
        = \pi_{\beta'}(\tau_{\beta,\alpha}^{+}(T_{\beta,\alpha}^{n}(y)))
        \geq \pi_{\beta'}(\tau_{\beta,\alpha}^{+}(y))
        = x.
    \end{align*}
Since $y \in [0,1]$ and $\tilde{\pi}_{\beta,\alpha}(y) = x$, and since $\tilde{\pi}_{\beta,\alpha}$ is monotonic, $x \geq \tilde{\pi}_{\beta,\alpha}(0)$.  This, together with the fact that $E_{\beta'}^{+} \subseteq [0, 1]$, yields  $\tilde{\pi}_{\beta,\alpha}(E_{\beta,\alpha}^+) \subseteq E_{\beta^\prime}^+\cap[\tilde{\pi}_{\beta,\alpha}(0),\tilde{\pi}_{\beta,\alpha}(1)]$.  To see that $\tilde{\pi}_{\beta,\alpha}(E_{\beta,\alpha}^+) \supseteq E_{\beta^\prime}^+\cap[\tilde{\pi}_{\beta,\alpha}(0),1]$ let $x \in E_{\beta'}^{+}$ with $x \geq \tilde{\pi}_{\beta,\alpha}(0)$.  By definition $E_{\beta'}^{+}$ and the commutativity of the diagrams in \eqref{eq:commutative_diag}, 
    \begin{align*}
        \pi_{\beta,\alpha}(\tau_{\beta,\alpha}^{+}(0))
        \leq \pi_{\beta,\alpha}(\tau_{\beta'}^{+}(x))
        \leq T^{n}_{\beta,\alpha}(\pi_{\beta,\alpha}(\tau_{\beta'}^{+}(x)))
        \leq \pi_{\beta,\alpha} (\tau_{\beta'}^{-}(1))
        \leq \pi_{\beta,\alpha} (\tau_{\beta,\alpha}^{-}(1)).
    \end{align*}
In other words $\pi_{\beta,\alpha}(\tau_{\beta'}^{+}(x)) \in E_{\beta,\alpha}^+$. Since $\tilde{\pi}_{\beta,\alpha}$ is invertible on $[0,1)$ with inverse $\pi_{\beta,\alpha} \circ \tau^{+}_{\beta'}$ the result follows.

Part~(2) follows from Part~(1) using the fact that $\tilde{\pi}_{\beta,\alpha}$ is monotonic and injective on $[0,1)$. Part~(3) follows using analogous argument to those used above to proof Part~(1), and Part~(4) follows from \Cref{prop:ent} and Part~(3) in the following way.
    \[
        \dim_H(K_{\beta,\alpha}^+(t))=\frac{h_{\operatorname{top}}(T_{\beta,\alpha}\vert_{K^+_{\beta,\alpha}(t)})}{\log(\beta)}=\frac{h_{\operatorname{top}}(T_{\beta^\prime}\vert_{K^+_{\beta^\prime}(\tilde{\pi}_{\beta,\alpha}(t))})}{\log(\beta)}=\frac{\log(\beta^\prime)}{\log(\beta)}\dim_H(K_{\beta^\prime}^+(\tilde{\pi}_{\beta,\alpha}(t))).
        \qedhere
    \]
\end{proof}

For the next proposition we will require the following analogue of the map $\tilde{\pi}_{\beta,\alpha}$, namely $\tilde{\pi}_{\beta,\alpha}^{-} \coloneqq \pi_{u(\beta,\alpha)} \circ \tau_{\beta,\alpha}^{-}$.  Note in the previous proposition, we could have also used the map $\tilde{\pi}_{\beta,\alpha}^{-}$ instead of $\tilde{\pi}_{\beta,\alpha}$ since  they coincide on all points considered in (1)--(4).  However, in the proof of \Cref{prop:char} we would need to replace $T_{\beta,\alpha}$ by $T_{\beta,\alpha}^{-}$, $T_{\beta'}$ by $T_{\beta'}^{-}$, $\tau_{\beta,\alpha}^{\pm}$ by $\tau_{\beta,\alpha}^{\mp}$ and $\tau_{\beta'}^{\pm}$ by $\tau_{\beta'}^{\mp}$, making it notionally heavy, and thus for ease of notation we use $\tilde{\pi}_{\beta,\alpha}$.

\begin{proposition}\label{prop:char(5)}
For all $(\beta,\alpha)\in\Delta$, we have that $\tilde{\pi}^{-}_{\beta,\alpha}(t_{\beta,\alpha,c})=t_{u(\beta,\alpha),c}$.
\end{proposition}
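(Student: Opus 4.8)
The plan is to unwind the definitions of $t_{\beta,\alpha,c}$ and $t_{u(\beta,\alpha),c}$ and to use the compatibility of the Hausdorff-dimension functions established in Proposition~\ref{prop:char}(4). Recall that $t_{\beta,\alpha,c}$ is defined as the critical value such that $\dim_H(K^{+}_{\beta,\alpha}(t)) > 0$ for $t < t_{\beta,\alpha,c}$ and $\dim_H(K^{+}_{\beta,\alpha}(t)) = 0$ for $t > t_{\beta,\alpha,c}$, with the analogous definition (taking $\alpha = 0$) giving $t_{u(\beta,\alpha),c}$. Writing $\beta' = u(\beta,\alpha)$, the key identity is Proposition~\ref{prop:char}(4): for every $t \in (0,1)$,
\begin{align*}
    \dim_H(K^{+}_{\beta,\alpha}(t)) = \frac{\log(\beta')}{\log(\beta)}\,\dim_H\!\big(K^{+}_{\beta'}(\tilde{\pi}_{\beta,\alpha}(t))\big).
\end{align*}
Since $\log(\beta')/\log(\beta)$ is a strictly positive constant, the left-hand side is positive if and only if $\dim_H(K^{+}_{\beta'}(\tilde{\pi}_{\beta,\alpha}(t))) > 0$, and it is zero if and only if the latter is zero.

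First I would record that $\tilde{\pi}_{\beta,\alpha}$ (equivalently $\tilde\pi^{-}_{\beta,\alpha}$ on the relevant points) is monotonic and a bijection from $[0,1)$ onto $K^{+}_{\beta'}(\tilde\pi_{\beta,\alpha}(0))$, as in Proposition~\ref{prop:char}(1), so it sends $(0,1)$ into the interval $[\tilde\pi_{\beta,\alpha}(0),1]$ in an order-preserving, injective way. Consequently, for $t < t_{\beta,\alpha,c}$ we have $\dim_H(K^{+}_{\beta,\alpha}(t)) > 0$, hence by the displayed identity $\dim_H(K^{+}_{\beta'}(\tilde\pi_{\beta,\alpha}(t))) > 0$, which by definition of $t_{\beta',c}$ forces $\tilde\pi_{\beta,\alpha}(t) < t_{\beta',c}$; and for $t > t_{\beta,\alpha,c}$ the same reasoning gives $\tilde\pi_{\beta,\alpha}(t) > t_{\beta',c}$ (using that $\dim_H(K^{+}_{\beta'}(s)) = 0$ for $s > t_{\beta',c}$ and positivity below). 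Taking the supremum over $t < t_{\beta,\alpha,c}$ and the infimum over $t > t_{\beta,\alpha,c}$, continuity/monotonicity of $\tilde\pi_{\beta,\alpha}$ then pins $\tilde\pi_{\beta,\alpha}(t_{\beta,\alpha,c}) = t_{\beta',c}$. Since $\tilde\pi_{\beta,\alpha}$ and $\tilde\pi^{-}_{\beta,\alpha}$ agree at all the points under consideration (the same remark made just before the proposition), this also gives $\tilde\pi^{-}_{\beta,\alpha}(t_{\beta,\alpha,c}) = t_{\beta',c} = t_{u(\beta,\alpha),c}$, as claimed.

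The main technical point — and the place where a little care is needed — is the behaviour exactly at the critical value $t_{\beta,\alpha,c}$, where the dimension function may or may not vanish, and the possibility that $\tilde\pi_{\beta,\alpha}$ has a jump (it is only monotonic, not a priori continuous) so that a supremum of images need not equal the image of a supremum. I would handle this by invoking the Devil-staircase structure of the dimension functions (Corollary~\ref{Cor_3} and the results of \cite{KKLL} on the greedy case): $\eta_{\beta,\alpha}$ and $\eta_{\beta',0}$ are decreasing, and the identity in Proposition~\ref{prop:char}(4) shows $\eta_{\beta,\alpha} = (\log\beta'/\log\beta)\,\eta_{\beta',0}\circ\tilde\pi_{\beta,\alpha}$ holds on the whole of $(0,1)$; together with the fact (Proposition~\ref{prop:char}(1)-(3)) that $\tilde\pi_{\beta,\alpha}$ maps $E^{+}_{\beta,\alpha}$ onto $E^{+}_{\beta'}\cap[\tilde\pi_{\beta,\alpha}(0),1]$ and that the critical point lies in the bifurcation set, one sees both critical points correspond under $\tilde\pi_{\beta,\alpha}$ without any spurious gap. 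This confirms $\mathcal{T}_{\beta,0,c}$-type identities transfer correctly and yields the proposition.
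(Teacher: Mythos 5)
Your overall strategy is the right one and matches the paper's in spirit (everything is driven by \Cref{prop:char}), but the argument has a genuine gap at exactly the point you flag and then wave away. The sup/inf step only traps the greedy critical value: since $\tilde{\pi}_{\beta,\alpha}$ is right continuous and $\tilde{\pi}^{-}_{\beta,\alpha}$ is left continuous, what your two implications give is $\sup_{t<t_{\beta,\alpha,c}}\tilde{\pi}_{\beta,\alpha}(t)=\tilde{\pi}^{-}_{\beta,\alpha}(t_{\beta,\alpha,c})\leq t_{u(\beta,\alpha),c}\leq \tilde{\pi}_{\beta,\alpha}(t_{\beta,\alpha,c})=\inf_{t>t_{\beta,\alpha,c}}\tilde{\pi}_{\beta,\alpha}(t)$. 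If $\tau^{+}_{\beta,\alpha}(t_{\beta,\alpha,c})\neq\tau^{-}_{\beta,\alpha}(t_{\beta,\alpha,c})$ this interval is nondegenerate, and your claim that $\tilde{\pi}_{\beta,\alpha}$ and $\tilde{\pi}^{-}_{\beta,\alpha}$ ``agree at all the points under consideration'' begs the question: the one point where they might disagree is $t_{\beta,\alpha,c}$ itself, which is where the whole difficulty sits. Saying that the devil-staircase structure and the bifurcation-set correspondence let one ``see'' that there is no spurious gap is not an argument; you must actually close the upper bound down to $\tilde{\pi}^{-}_{\beta,\alpha}(t_{\beta,\alpha,c})$.

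The missing ingredients, which is how the paper proceeds, are: (i) produce a sequence $t_{n}\nearrow t_{\beta,\alpha,c}$ with $t_{n}\in E^{+}_{\beta,\alpha}$ (if no such sequence existed the dimension function would be constant near $t_{\beta,\alpha,c}$, contradicting its definition), so that $\dim_{H}(K^{+}_{u(\beta,\alpha)}(\tilde{\pi}^{-}_{\beta,\alpha}(t_{n})))>0$ by Part~(4); (ii) use that $\tilde{\pi}^{-}_{\beta,\alpha}=\pi_{u(\beta,\alpha)}\circ\tau^{-}_{\beta,\alpha}$ is \emph{left} continuous — this, not continuity of $\tilde{\pi}_{\beta,\alpha}$, is why the statement is formulated with the minus projection — to get $\tilde{\pi}^{-}_{\beta,\alpha}(t_{n})\nearrow\tilde{\pi}^{-}_{\beta,\alpha}(t_{\beta,\alpha,c})$, which gives the lower bound $t_{u(\beta,\alpha),c}\geq\tilde{\pi}^{-}_{\beta,\alpha}(t_{\beta,\alpha,c})$; and (iii) evaluate at the critical point itself: $\dim_{H}(K^{+}_{\beta,\alpha}(t_{\beta,\alpha,c}))=0$, whence by Part~(4) (in its $\tilde{\pi}^{-}$ form, as noted in the remark preceding the proposition) $\dim_{H}(K^{+}_{u(\beta,\alpha)}(\tilde{\pi}^{-}_{\beta,\alpha}(t_{\beta,\alpha,c})))=0$, which forces the reverse inequality $t_{u(\beta,\alpha),c}\leq\tilde{\pi}^{-}_{\beta,\alpha}(t_{\beta,\alpha,c})$. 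Step (iii) is the substantive replacement for your appeal to continuity of $\tilde{\pi}_{\beta,\alpha}$, and steps (i)–(ii) are what make the left limit, rather than the right one, the correct value.
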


\begin{proof}
Observe that there exists a sequence of real numbers $(t_{n})_{n\in \mathbb{N}}$ with $t_n \in E_{\beta,\alpha}^{+}$ such that $t_n < t_{\beta,\alpha,c}$ and $\lim_{n\to \infty} t_{n} = t_{\beta,\alpha,c}$; otherwise the dimension function would be constant around  $t_{\beta,\alpha,c}$ contradicting its definition. Define $\hat{t}_n =\tilde{\pi}^{-}_{\beta,\alpha}(t_n)$. By Proposition \ref{prop:char} Part~(3), for all $n\in \mathbb{N}$, we have $\tilde{\pi}^{-}_{\beta,\alpha}(K_{\beta,\alpha}^+(t_n))=K_{u(\beta,\alpha)}^+(\hat{t}_n)$. An application of Proposition \ref{prop:char} Part~(4) together with our remarks directly preceding this Proposition, yields for $n \in \mathbb{N}$,
    \begin{align*}
        \dim_H(K_{u(\beta,\alpha)}^+(\hat{t}_n))>0, \quad
        \dim_H(K_{\beta,\alpha}^+(t_{\beta,\alpha,c}))=0,
        \quad \text{and} \quad
        \dim_H(K_{u(\beta,\alpha)}^+(\tilde{\pi}^{-}_{\beta,\alpha}(t_{\beta,\alpha,c}))=0.
    \end{align*}
As $\pi_{u(\beta,\alpha)}$ is continuous and $\tau_{\beta,\alpha}^{-}$ is left continuous, $\tilde{\pi}^{-}_{\beta,\alpha}$ is left continuous, and so $\tilde{\pi}^{-}_{\beta,\alpha}(\lim_{n\to \infty}(t_n))= \lim_{n\to \infty} \tilde{\pi}^{-}_{\beta,\alpha}(t_N)$.  This implies that $\tilde{\pi}^{-}_{\beta,\alpha}(t_{\beta,\alpha,c}) = \lim_{n\to \infty} \hat{t}_N$, and hence that $\tilde{\pi}^{-}_{\beta,\alpha}(t_{\beta,\alpha,c})=t_{u(\beta,\alpha),c}$.
\end{proof}

The value of $t_{u(\beta,\alpha),c}$ is explicitly given in \cite{KKLL} when $\tau_{\beta,\alpha}^-(1)$ is balanced. For all other cases see \cite{AK}.

A word $\omega = \omega_{1}\omega_{2} \cdots \in \{0,1\}^{\mathbb{N}}$ is called \textsl{balanced} if $\lvert(\omega_{n} + \omega_{n+1} + \cdots + \omega_{n+m}) - (\omega_{k+n} + \omega_{k+n+1} + \cdots + \omega_{k+n+m}) \rvert \leq 1$ for all $k$, $n$ and $m \in \mathbb{N}_{0}$ with $n \geq 1$. Following notation of \cite{KKLL} and \cite{Schme}, we let 
    \begin{align*}
        C_3 \coloneqq\{ \beta\in(1,2) \colon \text{the length of consecutive zeros in} \; \tau_\beta^-(1) \; \text{is bounded} \}
        \;\; \text{and} \;\;
        C \coloneqq \{ \beta \in (1,2) \colon \tau_\beta^-(1) \; \text{is balanced} \}.
    \end{align*}
For every $(\beta,\alpha) \in \Delta$ with $\alpha>0$, we have that $u(\beta,\alpha)\in C_3$. By \cite[Theorem 3.12]{KKLL}, for $\beta \in C_3$, there exists $\delta>0$ such that $E_{\beta}^+\cap[0,\delta]$ contains no isolated points. With this in mind and, for $\beta \in (1,2)$, setting $\delta(\beta) \coloneqq \sup \{ \delta \in [0, 1] \colon  E_{\beta}^+\cap[0,\delta] \; \text{contains no isolated points} \}$, we have the following corollary of Proposition \ref{prop:char}.

\begin{corollary}\label{cor:isoloated_pts}
Let $(\beta,\alpha)\in \Delta$ with $\alpha>0$.  If $\tilde{\pi}_{\beta,\alpha}(0)<\delta(u(\beta,\alpha))$ then there exists a $\delta>0$ such that $E_{\beta,\alpha}^+\cap[0,\delta]$ contains no isolated points. Further, if $u(\beta,\alpha) \in C$, then $\delta(u(\beta,\alpha))=1$ and $E_{\beta,\alpha}^+$ contains no isolated points.
\end{corollary}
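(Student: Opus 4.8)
The plan is to reduce both statements to the greedy case $\alpha=0$ by means of \Cref{prop:char}, and then invoke the results of \cite{KKLL}. Throughout I would set $\beta'=u(\beta,\alpha)$ and use that $\tilde{\pi}_{\beta,\alpha}=\pi_{\beta'}\circ\tau_{\beta,\alpha}^{+}$ is monotone and right-continuous on $[0,1)$ (being the composition of the continuous map $\pi_{\beta'}$ with the right-continuous map $\tau_{\beta,\alpha}^{+}$, exactly as in the proof of \Cref{prop:char(5)}), that $\tilde{\pi}_{\beta,\alpha}(E_{\beta,\alpha}^{+})=E_{\beta'}^{+}\cap[\tilde{\pi}_{\beta,\alpha}(0),1]$ by \Cref{prop:char}(1), and that, by \Cref{prop:char}(2), a point $x\in E_{\beta,\alpha}^{+}$ is isolated in $E_{\beta,\alpha}^{+}$ if and only if $\tilde{\pi}_{\beta,\alpha}(x)$ is isolated in $E_{\beta'}^{+}$. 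A preliminary observation I would record is that $E_{\beta'}^{+}\cap[0,\delta(\beta'))$ has no isolated points: if $y$ lies in this set then $y<\delta(\beta')$, so by the definition of $\delta(\beta')$ as a supremum there is some $\delta''>y$ with $E_{\beta'}^{+}\cap[0,\delta'']$ free of isolated points, and since $y<\delta''$ this already forces $y$ to be a non-isolated point of $E_{\beta'}^{+}\cap[0,\delta(\beta'))$.

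For the first assertion, assuming $\tilde{\pi}_{\beta,\alpha}(0)<\delta(\beta')$, I would use right-continuity of $\tilde{\pi}_{\beta,\alpha}$ at $0$ to pick $\delta>0$ with $\tilde{\pi}_{\beta,\alpha}(\delta)<\delta(\beta')$, so that monotonicity gives $\tilde{\pi}_{\beta,\alpha}\bigl(E_{\beta,\alpha}^{+}\cap[0,\delta]\bigr)\subseteq E_{\beta'}^{+}\cap[0,\delta(\beta'))$. If some $x\in E_{\beta,\alpha}^{+}$ with $x<\delta$ were isolated in $E_{\beta,\alpha}^{+}\cap[0,\delta]$, then (its nearby $E_{\beta,\alpha}^{+}$-points all lying below $\delta$) it would be isolated in $E_{\beta,\alpha}^{+}$, hence $\tilde{\pi}_{\beta,\alpha}(x)$ would be isolated in $E_{\beta'}^{+}$ by \Cref{prop:char}(2), and a fortiori in $E_{\beta'}^{+}\cap[0,\delta(\beta'))$, contradicting the preliminary observation. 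Thus $E_{\beta,\alpha}^{+}\cap[0,\delta)$ has no isolated points, and replacing $\delta$ by a slightly smaller value lying outside $E_{\beta,\alpha}^{+}$ — possible because $E_{\beta,\alpha}^{+}$ is a Lebesgue null set by \Cref{Cor_2}, hence has dense complement — removes the right endpoint from consideration and yields a $\delta>0$ with $E_{\beta,\alpha}^{+}\cap[0,\delta]$ free of isolated points.

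For the second assertion, suppose $\beta'=u(\beta,\alpha)\in C$, i.e.\ $\tau_{\beta'}^{-}(1)$ is balanced. By \cite{KKLL} (the balanced greedy case) the set $E_{\beta'}^{+}$ has no isolated points at all; since $E_{\beta'}^{+}\subseteq[0,1)$ this means $E_{\beta'}^{+}\cap[0,1]=E_{\beta'}^{+}$ has no isolated points, so $1$ belongs to the set whose supremum defines $\delta(\beta')$, whence $\delta(\beta')=1$. Moreover, for an arbitrary $x\in E_{\beta,\alpha}^{+}$, \Cref{prop:char}(1) gives $\tilde{\pi}_{\beta,\alpha}(x)\in E_{\beta'}^{+}$, which is not isolated in $E_{\beta'}^{+}$, so by \Cref{prop:char}(2) the point $x$ is not isolated in $E_{\beta,\alpha}^{+}$; as $x$ was arbitrary, $E_{\beta,\alpha}^{+}$ has no isolated points.

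Since \Cref{prop:char} already carries all of the symbolic and combinatorial content, I expect the proof to be short, with the only genuinely delicate point being the bookkeeping at the right endpoint of $[0,\delta]$, which is unavoidable because $\tilde{\pi}_{\beta,\alpha}$ is only right- (and not two-sided) continuous; I would dispose of it, as above, via the dense-complement argument. It is also worth noting in passing that for $\alpha>0$ the hypothesis of the first assertion is not vacuous: one has $u(\beta,\alpha)\in C_{3}$, so $\delta(u(\beta,\alpha))>0$ by \cite[Theorem 3.12]{KKLL}.
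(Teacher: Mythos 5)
Your proof is correct and takes essentially the same route as the paper's, whose entire argument is to cite \Cref{prop:char} Parts (1) and (2) together with \cite[Theorems 3.12 and 3]{KKLL}. You have simply filled in the details the paper leaves implicit --- the supremum argument for $\delta(u(\beta,\alpha))$, the right-continuity of $\tilde{\pi}_{\beta,\alpha}$, and the adjustment at the right endpoint of $[0,\delta]$ --- and all of these are handled correctly.
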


\begin{proof}
The first statement follows from \Cref{prop:char} Parts (1) and (2), and \cite[Theorem 3.12]{KKLL}.  The second statement follows from \cite[Theorem 3]{KKLL}, which states that if $\beta\in C$ then $E_\beta$ does not contain any isolated points.
\end{proof}

\begin{proof}[Proof of \texorpdfstring{\Cref{Cor_2}}{Corollary 1.3}]
In \cite{MR0166332} an absolutely continuous invariant measure of $T_{\beta,\alpha}$ is constructed, and in \cite{Hof} it is shown that this measure is ergodic. (In fact it is shown that it is maximal, and the only measure of maximal entropy.)  This yields, given an $m \in \mathbb{N}$, that for almost all $x \in [0,1]$, there exists $n_{x} = n \in \mathbb{N}_{0}$ such that $T_{\beta,\alpha}^{n}(x) \in [0, 1/m)$, and hence that $K_{\beta,\alpha}^{+}(1/m)$ is a Lebesgue null. Since $E_{\beta,\alpha}^{+} \setminus \{0\} \subseteq \cup_{m=1}^{\infty} K_{\beta,\alpha}^{+}(1/m)$, by subadditivity of the Lebesgue measure, it follows that $E_{\beta,\alpha}^{+}$ is a Lebesgue null set. The statement on the isolated points of $E_{\beta,\alpha}^{+}$ follows from \Cref{cor:isoloated_pts}.
\end{proof}

\begin{proof}[Proof of \texorpdfstring{\Cref{Cor_3}}{Corollary 1.4}]
This is a direct consequence of \Cref{prop:char} Part~(4) and \cite[Theorem~A (ii)]{KKLL}.
\end{proof}

\begin{proof}[Proof of \texorpdfstring{\Cref{Cor_E_beta_alpha}}{Corollary 1.5}]
Let $(\beta, \alpha) \in \Delta$ with $\Omega_{\beta,\alpha}$ a subshift of finite type and $T_{\beta,\alpha}$ transitive, let $P_{\beta,\alpha}$ denote the Markov partition of $T_{\beta,\alpha}$ defined in \eqref{eq:Markov_Partition}, and for $n\in \mathbb{N}$, let $\Omega_{T_{\beta,\alpha}}\vert_{n}$ denote the set of all length $n$ admissible words of $T_{\beta,\alpha}$ with respect to the partition $P_{\beta,\alpha}$.  Fix $n \in \mathbb{N}$ sufficient large, set $\omega$ to be lexicographically the smallest word in $\Omega_{T_{\beta,\alpha}}\vert_{n}$, let $a_{n} = a_{\beta, \alpha, n} = \sup I(\omega)$ and let $\nu \in \Omega_{T_{\beta,\alpha}}\vert_{n}$ with $\nu \succ \omega$. By transitivity and the Markov property, there exist $k \in \mathbb{N}$ and $\xi \in \Omega_{\beta,\alpha}\vert_{k}$ with $k > n$, $I(\xi) \subset I(\omega)$, $T_{\beta,\alpha}^{k-n}(I(\xi)) = I(\nu)$, and $T_{\beta,\alpha}^j(I(\xi))$ an interval and $T_{\beta,\alpha}^{j}(x) \geq a_{n}$ for all $j \in \{1,2, \dots, k - n\}$ and $x \in I(\xi)$.  In other words, there exists a linear scaled copy of $K_{\beta,\alpha}^{+}(a_{n}) \cap I(\nu)$ in $I(\xi) \cap E_{\beta,\alpha}^{+}$.   Namely, we have
    \begin{align}\label{eq:scaling_func}
        f_{\beta,\alpha,\nu,n}(K_{\beta,\alpha}^{+}(a_{n}) \cap I(\nu)) \subseteq I(\xi) \cap E_{\beta,\alpha}^{+},
    \end{align}
where $f_{\beta, \alpha,\nu, n} = f_{\beta,\alpha,\chi(\xi_{1})} \circ f_{\beta,\alpha,\chi(\xi_{2})} \circ \cdots \circ f_{\beta,\alpha,\chi(\xi_{k-n})}$ and $\chi \colon \Omega_{T_{\beta,\alpha}}\vert_{1} \to \{ 0, 1\}$ is defined by
    \begin{align*}
        \chi(a) = \begin{cases}
            0 & \text{if} \; I(a) \subseteq [0, p_{\beta,\alpha}],\\
            1 & \text{otherwise.}
        \end{cases}
    \end{align*}
Here, we recall that $f_{\beta,\alpha,0}(x) = \beta^{-1}x-\alpha\beta^{-1}$ and $f_{\beta,\alpha, 1}(x) = \beta^{-1}x-(\alpha-1)\beta^{-1}$ for $x \in [0,1]$. With the above at hand, we may conclude that
    \begin{align*}
        \dim_H (E_{\beta,\alpha}^{+})
        \geq \max \{ \dim_H(K_{\beta,\alpha}^{+}(a_{n}) \cap I(\nu)) \colon \nu \in \Omega_{T_{\beta,\alpha}}\vert_{n} \}
        = \dim_H(K_{\beta,\alpha}^{+}(a_{n})).
    \end{align*}
Since $n$ was chosen sufficiently large but arbitrarily, this in tandem with \Cref{Cor_3} implies $\dim_H (E_{\beta,\alpha}^{+}) = 1$, since $a_n$ converges to zero as $n$ tends to infinity.  

Since Hausdorff dimension is preserved under taking linear transformations, an application of \Cref{thm:G1990+Palmer79,thm:LSSS}, yields for $(\beta, \alpha) \in \Delta$ with $\Omega_{\beta,\alpha}$ a subshift of finite type, that $\dim_H (E_{\beta,\alpha}^{+}) = 1$. 

To conclude, let $(\beta,\alpha) \in \Delta$ be chosen arbitrarily, and let $((\beta_{n},\alpha_{n}))_{n \in \mathbb{N}}$ denote the sequence of tuples given in \Cref{Cor_1} converging to $(\beta, \alpha)$. Set $\tilde{\pi}_{\beta, \alpha}^{(n)} = \pi_{\beta,\alpha} \circ \tau^{+}_{\beta_{n},\alpha_{n}}$, and for $t$ and $s \in [0,1]$ with $t < s$, let
    \begin{align*}
        K_{\beta,\alpha}^{+}(t, s) \coloneqq \{ x \in[0, 1) \colon T_{\beta,\alpha}^{n}(x) \not \in [0,t) \cup (s, 1] \; \textup{for all} \; n \in \mathbb{N}_{0} \}
    \end{align*}
By \Cref{Cor_1}, \Cref{thm:Structure}, and the commutativity of the diagram in \eqref{eq:commutative_diag}, we may choose $((\beta_{n},\alpha_{n}))_{n \in \mathbb{N}}$ so that $(\pi_{\beta,\alpha}^{(n)}(0))_{n \in \mathbb{N}}$ is a monotonically decreasing sequence converging to zero, and $(\pi_{\beta,\alpha}^{(n)}(1))_{n \in \mathbb{N}}$ is a monotonically increasing sequence converging to one. Thus, by construction, for $n$ and $l \in \mathbb{N}$ with $l \geq n$,
    \begin{align*}
        K_{\beta,\alpha}^{+}(\pi_{\beta,\alpha}^{(n)}(0), \pi_{\beta,\alpha}^{(n)}(1)) \subseteq K_{\beta,\alpha}^{+}(\pi_{\beta,\alpha}^{(n)}(0), \pi_{\beta,\alpha}^{(l)}(1)),
        \quad \text{and} \quad 
        K_{\beta,\alpha}^{+}(\pi_{\beta,\alpha}^{(n)}(0)) = \bigcup_{m \in \mathbb{N}} K_{\beta,\alpha}^{+}(\pi_{\beta,\alpha}^{(n)}(0), \pi_{\beta,\alpha}^{(m)}(1)).
    \end{align*}
Hence, by countable stability of the Hausdorff dimension and \Cref{Cor_3},
    \begin{align}\label{eq:limit_hausdorff_proj_0,1}
        \lim_{n \to \infty} \dim_{H}(K_{\beta,\alpha}^{+}(\pi_{\beta,\alpha}^{(n)}(0),\pi_{\beta,\alpha}^{(n)}(1))) = 1.   
    \end{align}
Via analogous arguments to those given in the proof of \Cref{prop:char}, we have the following.
    \begin{enumerate}
        \item[($1^{*}$)]  $\tilde{\pi}_{\beta,\alpha}^{(n)}([0,1])=K_{\beta,\alpha}^+(\tilde{\pi}_{\beta,\alpha}^{(n)}(0), \tilde{\pi}_{\beta,\alpha}^{(n)}(1))$ and $\tilde{\pi}_{\beta,\alpha}^{(n)}(E_{\beta_{n},\alpha_{n}}^+)=E_{\beta, \alpha}^+\cap[\tilde{\pi}_{\beta,\alpha}^{(n)}(0),\tilde{\pi}_{\beta,\alpha}^{(n)}(1)]$.
        \item[($3^{*}$)] For $t\in(0,1)$, we have $ \tilde{\pi}_{\beta,\alpha}^{(n)}(K_{\beta_{n},\alpha_{n}}^+(t))=K_{\beta,\alpha}^+(\tilde{\pi}_{\beta,\alpha}^{(n)}(t), \tilde{\pi}_{\beta,\alpha}^{(n)}(1)) $.
    \end{enumerate}
For $k \in \mathbb{N}$ sufficiently large and $\nu \in \Omega_{T_{\beta_{n},\alpha_{n}}}\vert_{k}$, setting $a_{n,k} = a_{\beta_{n},\alpha_{n},k}$, from the equalities given in \eqref{eq:alt_IFS} and \eqref{eq:scaling_func}, the commutativity of the diagram in \eqref{eq:commutative_diag} and ($1^{*}$), we have that
    \begin{align*}
        f_{\beta,\alpha, \nu, k}(
        \pi_{\beta,\alpha}^{(n)}( K_{\beta_{n},\alpha_{n}}^{+}(a_{n,k}) \cap I(\nu))) 
        =\pi_{\beta,\alpha}^{(n)}( f_{\beta_{n},\alpha_{n}, \nu, k}( K_{\beta_{n},\alpha_{n}}^{+}(a_{n,k}) \cap I(\nu)))
        \subseteq \pi_{\beta,\alpha}^{(n)}(E_{\beta_{n},\alpha_{n}}^{+})
        \subseteq E_{\beta,\alpha}.
    \end{align*}
This in tandem with ($3^{*}$), the fact that there exists $\nu \in \Omega_{T_{\beta_{n},\alpha_{n}}}\vert_{k}$ with 
    \begin{align*}
        \dim_{H}(\pi_{\beta,\alpha}^{(n)}( K_{\beta_{n},\alpha_{n}}^{+}(a_{n,k}) \cap I(\nu))) = \dim_{H}(\pi_{\beta,\alpha}^{(n)}( K_{\beta_{n},\alpha_{n}}^{+}(a_{n, k}))),
    \end{align*}
and that Hausdorff dimension is invariant under linear scaling, implies that
    \begin{align*}
        \dim_{H}(K_{\beta,\alpha}^{+}(\pi_{\beta,\alpha}^{(n)}(a_{n,k}),
        \pi_{\beta,\alpha}^{(n)}(1)))
        =\dim_{H}(\pi_{\beta,\alpha}^{(n)}( K_{\beta_{n},\alpha_{n}}^{+}(a_{n,k}))) \leq \dim_{H}(E_{\beta,\alpha}).
    \end{align*}
This in tandem with \Cref{Cor_3}, the equality given in \eqref{eq:limit_hausdorff_proj_0,1}, the observations that, for $n \in \mathbb{N}$, the sequence $(\pi_{\beta,\alpha}^{(n)}(a_{n,k}))_{k \in \mathbb{N}}$ is monotonically decreasing with $\lim_{k \to \infty} \pi_{\beta,\alpha}^{(n)}(a_{n,k}) =\pi_{\beta,\alpha}^{(n)}(0)$, and for $l$ and $m \in \mathbb{N}$ with $l \geq m$, 
    \begin{align*}
        K_{\beta,\alpha}^{+}(\pi_{\beta,\alpha}^{(n)}(a_{n,m}),
        \pi_{\beta,\alpha}^{(n)}(1)) \subseteq K_{\beta,\alpha}^{+}(\pi_{\beta,\alpha}^{(n)}(a_{n,l}), \pi_{\beta,\alpha}^{(n)}(1)),
        \quad \text{and} \quad
        K_{\beta,\alpha}^{+}(\pi_{\beta,\alpha}^{(n)}(0),
        \pi_{\beta,\alpha}^{(n)}(1)) = \bigcup_{k \in \mathbb{N}} K_{\beta,\alpha}^{+}(\pi_{\beta,\alpha}^{(n)}(a_{n,k}),
        \pi_{\beta,\alpha}^{(n)}(1)),
    \end{align*}
and the countable stability of the Hausdorff dimension, yields the required result.
\end{proof}

\subsection*{Examples and applications}

\begin{figure}[t]
  \centering
  \begin{subfigure}[b]{0.32\textwidth}
    \includegraphics[width=\textwidth]{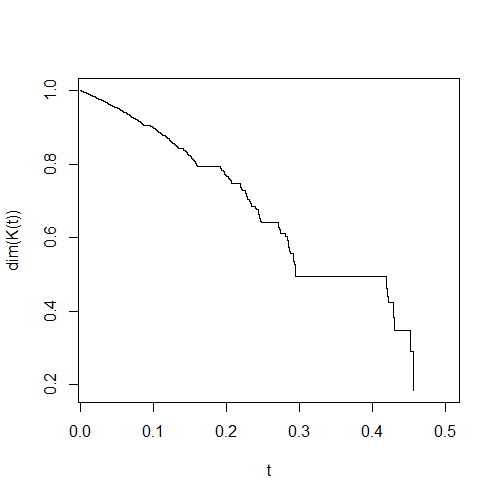}
  \end{subfigure}
  \hfill
  \begin{subfigure}[b]{0.32\textwidth}
    \includegraphics[width=\textwidth]{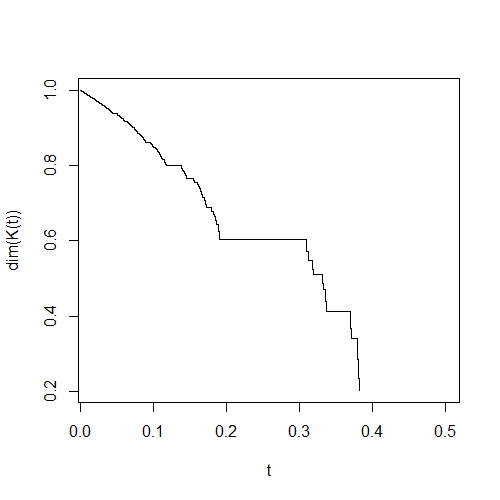}
  \end{subfigure}
    \hfill
  \begin{subfigure}[b]{0.32\textwidth}
    \includegraphics[width=\textwidth]{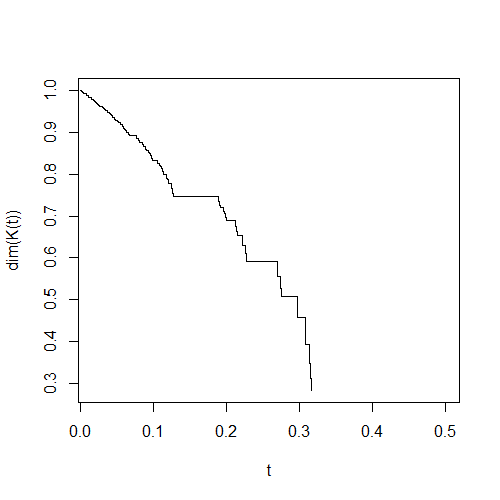}
  \end{subfigure}
  \caption{Graphs of $\eta_{\beta,\alpha}$: on the left, $\eta_{\beta}(t)$ with $\beta$ such that $\tau_\beta^-(1)=(110)^\infty$; in the middle, $\eta_{\beta,\alpha}$ for $(\beta, \alpha) \in \Delta$ with $\tau_{\beta,\alpha}^-(1)=(110)^\infty$ and $\tau_{\beta,\alpha}^+(0)=(001)^\infty$; on the right, $\eta_{\beta}$ for $(\beta, \alpha) \in \Delta$ such that $\tau_{\beta,\alpha}^-(1)=(10)^\infty$ and $\tau_{\beta,\alpha}^+(0)=(0001)^\infty $.}\label{fig:dim}
\end{figure}

Let $(\beta,\alpha) \in \Delta$ be such that $\tau^-_{\beta,\alpha}(1)=(10)^\infty$. In which case, $u(\beta,\alpha)$ is equal to the golden mean, which we denote by $G$, and belongs to the set $C$. Thus, $E^{+}_{\beta,\alpha}$ contains no isolated points.  From \cite[Proposition 5.2]{KKLL} and by an elementary calculation, we have that $t_{u(\beta,\alpha),c} = G^{-2}$ and $\tau_{G}^{-}(G^{-2}) = 00(10)^\infty$. This, in tandem with \Cref{prop:char(5)}, yields $t_{\beta,\alpha,c} = \pi_{\beta, \alpha} \tau_{G}^{-}(G^{-2}) = \pi_{\beta, \alpha}(00(10)^\infty) = \alpha/(1-\beta)+1/(\beta^{3}-1)$, which one can show is equal to $(1-\alpha-\beta\alpha)\beta^{-2}$ using the fact that $\tau^-_{\beta,\alpha}(1)=(10)^\infty$.  Moreover, by \Cref{prop:char} Part~(4),
    \begin{align*}
        \dim_H(K_{\beta,\alpha}(t))=(\log(G)/\log(\beta)) \dim_H(K_{G}(\tilde{\pi}_{\beta,\alpha}(t)),
    \end{align*}
for all $t \in (0, 1)$.  We now show that for a given $\beta\in(1,G)$ there exists a unique $\alpha\in(0,1/2)$ with $G = u(\beta,\alpha)$, or equivalently, that for a given $\beta\in(1,G)$ there exists a unique $\alpha\in(0,1/2)$ with $T_{\beta,\alpha}(1) = p_{\beta,\alpha}$. Using the definitions of the involved terms, $T_{\beta,\alpha}(1) = p_{\beta,\alpha}$ if and only if $\alpha=1-\beta^2/(\beta+1)$. Noting, when $\beta = G$, that $\alpha=0$, and as $\beta$ approaches $1$ from above, that $\alpha =1-\beta^2/(\beta+1)$ converges to $1/2$, yields the required result.

By \Cref{thm:LSSS}, under the assumption that $\tau^-_{\beta,\alpha}(1)=(10)^\infty$, if $\tau_{\beta,\alpha}^+(0)$ is periodic, then $\Omega_{\beta,\alpha}$ is a subshift of finite type. We now find $(\beta, \alpha) \in \Delta$ such that $\tau_{\beta,\alpha}^+(0)=(0001)^\infty$ and $\tau^-_{\beta,\alpha}(1)=(10)^\infty$. For this we, observe that
    \begin{align*}
        T_{\beta,\alpha}(0) = \alpha, \quad
        T_{\beta,\alpha}^2(0) = \beta\alpha+\alpha, \quad
        T_{\beta,\alpha}^3(0) = \beta(\beta\alpha+\alpha)+\alpha, \quad \text{and} \quad
        T_{\beta,\alpha}^4(0) = \beta(\beta(\beta\alpha+\alpha)+\alpha)+\alpha-1=0.
    \end{align*}
Substituting $\alpha=1-\beta^2/(\beta+1)$ into the last equality, gives
    \begin{align*}
        \beta(\beta(\beta(1-\beta^2/(\beta+1))+(1-\beta^2/(\beta+1)))+(1-\beta^2/(\beta+1)))+(1-\beta^2/(\beta+1))-1 = 0.
    \end{align*}
This reduces to $\beta(\beta^4 -\beta^2- \beta -1) =0$. Thus, if $\beta$ is the positive real root of $\beta^4 -\beta^2- \beta -1 =0$ and $\alpha=1-\beta^2/(\beta+1)$, then $\tau_{\beta,\alpha}^+(0)=(0001)^\infty$ and $\tau^-_{\beta,\alpha}(1)=(10)^\infty$.  Numerically approximating $\beta$ and $\alpha$ yields $\beta\approx 1.4656$ and $\alpha\approx 0.1288$.

We utilise the above, in particular \Cref{prop:char}, in studying the dimension function $\eta_{\beta,\alpha}$.  Recall, if $t\not \in E^{+}_{\beta,\alpha}$, then there exists $t^*>t$ with $K^{+}_{\beta,\alpha}(t)=K^{+}_{\beta,\alpha}(t^*)$.  Thus, it suffices to study $K^{+}_{\beta,\alpha}(t)$ for $t\in E^{+}_{\beta,\alpha}$. For a fixed $t\in E^{+}_{\beta,\alpha}$, with the aid of \Cref{thm:BSV14}, we find $(\beta^\prime,\alpha^\prime) \in \Delta$ with $\tau_{\beta,\alpha}(t)=\tau_{\beta^\prime,\alpha^\prime}(0)$ and  $\tau^{-}_{\beta,\alpha}(1) = \tau^{-}_{\beta^\prime,\alpha^\prime}(1)$. 
By \Cref{prop:char} Part~(4),
    \begin{align*}
        \eta_{\beta,\alpha}(t)=\frac{\log(u(\beta,\alpha))}{\log(\beta)} \dim_H(K_{u(\beta,\alpha)}^+(\tilde{\pi}_{\beta,\alpha}(t)))
        \quad \text{and} \quad
        \eta_{\beta^\prime,\alpha^\prime}(0)=\frac{\log(u(\beta^\prime,\alpha^\prime))}{\log(\beta^\prime)} \dim_H(K_{u(\beta^\prime,\alpha^\prime)}^+(\tilde{\pi}_{\beta^\prime,\alpha^\prime}(0))).
    \end{align*}
Since $u(\beta^\prime,\alpha^\prime)=u(\beta,\alpha)$, $\tilde{\pi}_{\beta,\alpha}(t)=\tilde{\pi}_{\beta^\prime,\alpha^\prime}(0)$ and $\eta_{\beta^\prime,\alpha^\prime}(0)=1$, we have $\eta_{\beta,\alpha}(t)=\log(\beta^\prime)/\log(\beta)$. In summary, determining the value of $\eta_{\beta, \alpha}(t)$ reduces down to finding such $\alpha^\prime$ and $\beta^\prime$. This can performed numerically with the aid of the monotonicity and continuity of the projection maps, see Figure \ref{fig:dim} for sample numerical outputs.

\section{Winning sets of intermediate \texorpdfstring{$\beta$}{beta}-transformations: Proof of Proof of \texorpdfstring{\Cref{thm:main_2}}{Theorem 1.6}}\label{sec:proof_thm_1_6}

To show the conditions of \Cref{thm_HY_Thm_2.1} are satisfied when $T=T_{\beta, \alpha}$ for all $(\beta, \alpha) \in \Delta$ with $T_{\beta, \alpha}$ transitive and $\Omega_{\beta, \alpha}$ of finite type we use the following lemma on the geometric length of cylinder sets and the following proposition.

\begin{lemma}\label{lem:geometric_lengths_of_cylinders}
Let $(\beta, \alpha) \in \Delta$ be such that $T_{\beta, \alpha}$ is transitive and $\Omega_{\beta, \alpha}$ is a subshift of finite type. If $\nu = \nu_{1} \cdots \nu_{\lvert \nu \rvert}$ is an admissible word with respect to the partition $P_{\beta,\alpha}$, then $\rho \beta^{-\lvert \nu \rvert} \leq  \lvert I(\nu) \rvert \leq \beta^{-\lvert \nu \rvert}$, where $\rho = \min \{ \lvert I(i) \rvert \colon i \in \Lambda \}$.
\end{lemma}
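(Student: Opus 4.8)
The plan is to argue by induction on the length $n=\lvert\nu\rvert$, using two structural facts about $T_{\beta,\alpha}$ relative to $P_{\beta,\alpha}$. First, $T_{\beta,\alpha}$ restricted to each element of $P_{\beta,\alpha}$ is affine with slope $\beta$: by the discussion surrounding \eqref{eq:Markov_Partition} the map $T_{\beta,\alpha}$ is continuous on each $I(i)$, so no $I(i)$ straddles the break point $p_{\beta,\alpha}$, and on each of $[0,p_{\beta,\alpha})$ and $[p_{\beta,\alpha},1]$ the map $T_{\beta,\alpha}$ is affine with slope $\beta$; hence $\lvert T_{\beta,\alpha}(S)\rvert=\beta\lvert S\rvert$ for every subinterval $S$ of a single element of $P_{\beta,\alpha}$. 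Second, since $\Omega_{\beta,\alpha}$ is a subshift of finite type (\Cref{thm:LSSS}), $T_{\beta,\alpha}$ is Markov with respect to $P_{\beta,\alpha}$, i.e. for all $i,j\in\Lambda$ either $T_{\beta,\alpha}(I(i))\cap I(j)=\emptyset$ or $I(j)\subseteq T_{\beta,\alpha}(I(i))$.

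For the base case $n=1$: since $T_{\beta,\alpha}(I(\nu_1))\subseteq[0,1]$ and $T_{\beta,\alpha}$ has slope $\beta$ on $I(\nu_1)$, one gets $\lvert I(\nu_1)\rvert=\beta^{-1}\lvert T_{\beta,\alpha}(I(\nu_1))\rvert\le\beta^{-1}$, and together with $\lvert I(\nu_1)\rvert\ge\rho\ge\rho\beta^{-1}$ (using $\beta>1$) this is the claim. For the inductive step, write $\nu=\nu_1\nu'$ with $\nu'=\nu_2\cdots\nu_n$, so that $I(\nu)=I(\nu_1)\cap T_{\beta,\alpha}^{-1}(I(\nu'))$ and hence $T_{\beta,\alpha}(I(\nu))=T_{\beta,\alpha}(I(\nu_1))\cap I(\nu')$. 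Since $\nu$ is admissible so is its subword $\nu_1\nu_2$, whence $T_{\beta,\alpha}(I(\nu_1))\cap I(\nu_2)\ne\emptyset$, and the Markov dichotomy forces $I(\nu_2)\subseteq T_{\beta,\alpha}(I(\nu_1))$; as $I(\nu')\subseteq I(\nu_2)$ this upgrades the inclusion to the equality $T_{\beta,\alpha}(I(\nu))=I(\nu')$. Because $T_{\beta,\alpha}$ is affine with slope $\beta$ on $I(\nu_1)\supseteq I(\nu)$, this gives $\lvert I(\nu)\rvert=\beta^{-1}\lvert I(\nu')\rvert$. The word $\nu'$ is admissible of length $n-1$ (its cylinder $I(\nu')=T_{\beta,\alpha}(I(\nu))$ is nonempty), so the induction hypothesis yields $\rho\beta^{-(n-1)}\le\lvert I(\nu')\rvert\le\beta^{-(n-1)}$, and therefore $\rho\beta^{-n}\le\lvert I(\nu)\rvert\le\beta^{-n}$, completing the induction.

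The step deserving care is the passage from $T_{\beta,\alpha}(I(\nu_1))\cap I(\nu_2)\ne\emptyset$ to $I(\nu_2)\subseteq T_{\beta,\alpha}(I(\nu_1))$, i.e. the genuine use of the Markov property (applied with $i=\nu_1$, $j=\nu_2$) rather than merely admissibility; one should also attend to the fact that the elements of $P_{\beta,\alpha}$ are half-open, so that intersecting $T_{\beta,\alpha}(I(\nu_1))$ with $I(\nu_2)$ really produces $I(\nu')$ and not $I(\nu')$ with an endpoint removed. Here it suffices to observe that, by construction of $P_{\beta,\alpha}$ from the orbit set $A$, the forward image under $T_{\beta,\alpha}$ of each endpoint in $A$ lies again in $A$, so no spurious endpoint is created; alternatively one may carry out the entire argument with closures of cylinders throughout, which leaves all lengths unchanged. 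Everything else is a routine computation with the slope-$\beta$ affine branches, so I expect no further obstacle.
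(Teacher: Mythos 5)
Your proof is correct and takes essentially the same route as the paper: both arguments rest on the Markov property of $T_{\beta,\alpha}$ with respect to $P_{\beta,\alpha}$ and the slope-$\beta$ affine branches to get $\lvert I(\nu)\rvert=\beta^{-\lvert\nu\rvert+1}\lvert I(\nu_{\lvert\nu\rvert})\rvert$ (you phrase this as an induction via $T_{\beta,\alpha}(I(\nu))=I(\sigma(\nu))$, the paper iterates the map directly), and then bound the length of a level-one cylinder between $\rho$ and $\beta^{-1}$. Your care about the half-open endpoints mirrors the paper's use of interiors $J(\cdot)$.
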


\begin{proof}
If $\lvert \nu \rvert = 1$, the result is a consequence of the fact that $\max\{ p_{\beta,\alpha}, 1 - p_{\beta,\alpha}\} \leq \beta^{-1}$, and that $I(\nu) \subseteq [0, p_{\beta,\alpha}]$ or $I(\nu) \subseteq [p_{\beta,\alpha},1]$.  Therefore, we may assume that $\lvert \nu \rvert \geq 2$.  Since $T_{\beta, \alpha}$ is Markov with respect to the partition $P_{\beta,\alpha}$, for $j \in \{ 0, 1, \ldots, \lvert \nu \rvert \}$, we have $T_{\beta,\alpha}^{j}(I(\nu))$ is an interval and  $T_{\beta,\alpha}^{\lvert \nu \rvert - 1}(J(\nu)) = J(\nu_{\lvert \nu \rvert})$, where for a given admissible finite word $\omega$, we denote by $J(\omega)$ the interior of $I(\omega)$. 
This implies that $\lvert I(\nu) \rvert = \beta^{-\lvert \nu \rvert + 1}\lvert I(\nu_{\lvert \nu \rvert}) \rvert$ and hence that $\rho \beta^{-\lvert \nu \rvert} \leq  \lvert I(\nu_{\lvert \nu \rvert}) \rvert \beta^{-\lvert \nu \rvert + 1} = \lvert I(\nu) \rvert = \lvert I(\nu_{\lvert \nu \rvert}) \rvert \beta^{-\lvert \nu \rvert + 1} \leq  \beta^{-\lvert \nu \rvert}$.
\end{proof}

\begin{proposition}\label{prop:thm_SFT+Transitive_implies_winning}
Under the hypotheses of \Cref{lem:geometric_lengths_of_cylinders}, for all $x \in [0, 1]$ and $\gamma \in (0, 1)$, we have that the geometric condition $H_{x, \gamma}$, with $T = T_{\beta, \alpha}$ and the partition $P_{\beta,\alpha}$, is satisfied.
\end{proposition}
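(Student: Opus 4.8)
The plan is to check both clauses of the geometric condition $H_{x,\gamma}$ directly, relying only on \Cref{lem:geometric_lengths_of_cylinders} and on the elementary fact that appending letters to an admissible word only shrinks its cylinder set. Fix $x \in [0,1]$ and $\gamma \in (0,1)$, and fix a symbolic representation $\omega = \omega_{1}\omega_{2}\cdots$ of $x$ with respect to the partition $P_{\beta,\alpha}$ of \eqref{eq:Markov_Partition}; such a representation exists by the discussion in \Cref{sec:HY}, and since $\Omega_{\beta,\alpha}$ is a subshift of finite type, $P_{\beta,\alpha}$ is a finite partition with respect to which $T_{\beta,\alpha}$ is a piecewise-linear Markov map, so \Cref{lem:geometric_lengths_of_cylinders} applies to every admissible word.

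For the first clause, namely the limit \eqref{eq:condition_1}, take $i \in \mathbb{N}$ and $u \in \Omega_{T}^{*}$ with $u\omega\vert_{i} \in \Omega_{T}^{*}$. Applying the upper bound of \Cref{lem:geometric_lengths_of_cylinders} to the admissible word $u\omega\vert_{i}$ and the lower bound to the admissible word $u$ gives
\[
    \frac{\lvert I(u\omega\vert_{i}) \rvert}{\lvert I(u) \rvert} \;\leq\; \frac{\beta^{-(\lvert u \rvert + i)}}{\rho\,\beta^{-\lvert u \rvert}} \;=\; \frac{\beta^{-i}}{\rho},
\]
where $\rho = \min\{\, \lvert I(j) \rvert : j \in \Lambda \,\} > 0$ is the constant from \Cref{lem:geometric_lengths_of_cylinders}. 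As this bound does not depend on $u$, taking the supremum over admissible $u$ and then letting $i \to \infty$ shows that the iterated limit in \eqref{eq:condition_1} equals $0$.

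For the second clause of $H_{x,\gamma}$ (the existence of a natural number $i^{*}$ and a constant $c > 0$ making the dichotomy in \eqref{eq:condition_2} hold), I claim that $i^{*} = 1$ and $c = 1$ work and that the $\gamma/4$-comparability hypothesis is not needed. If $\nu$ is $\omega\vert_{i}$-extendable, then $\nu\omega\vert_{i} \in \Omega_{T}^{*}$ and, since the first $\lvert \nu \rvert$ defining conditions of $I(\nu\omega\vert_{i})$ are exactly those of $I(\nu)$, we have $I(\nu\omega\vert_{i}) \subseteq I(\nu)$; likewise $I(\eta\omega\vert_{i}) \subseteq I(\eta)$ when $\eta$ is $\omega\vert_{i}$-extendable. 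Hence every $a \in I(\nu\omega\vert_{i})$ lies in $I(\nu)$ and every $b \in I(\eta\omega\vert_{i})$ lies in $I(\eta)$, so $\lvert a - b \rvert \geq \operatorname{dist}(I(\nu), I(\eta))$; taking the infimum over such $a$ and $b$ yields $\operatorname{dist}(I(\nu\omega\vert_{i}), I(\eta\omega\vert_{i})) \geq \operatorname{dist}(I(\nu), I(\eta))$, which is the second alternative in \eqref{eq:condition_2} with $c = 1$. This completes the verification of $H_{x,\gamma}$.

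I do not expect a genuine obstacle at this stage: once \Cref{lem:geometric_lengths_of_cylinders} is available the argument is essentially bookkeeping, and the substantive work — exploiting the piecewise-linear Markov structure supplied by the subshift of finite type hypothesis — is already contained in that lemma. The only points needing a little care are that the bounds of \Cref{lem:geometric_lengths_of_cylinders} are uniform in the word (which is precisely what controls the supremum over $u$ in \eqref{eq:condition_1}), and that the argument is to be carried out for a single fixed symbolic representation $\omega$ of $x$, which is all the definition of $H_{x,\gamma}$ demands.
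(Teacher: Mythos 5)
Your verification of \eqref{eq:condition_1} is correct and is exactly the paper's (one-line) argument: the two-sided bound of \Cref{lem:geometric_lengths_of_cylinders} gives $\lvert I(u\omega\vert_{i})\rvert / \lvert I(u)\rvert \leq \rho^{-1}\beta^{-i}$ uniformly in $u$, and this tends to zero.

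The treatment of \eqref{eq:condition_2} has a genuine gap. You are right that $I(\nu\omega\vert_{i}) \subseteq I(\nu)$ and $I(\eta\omega\vert_{i}) \subseteq I(\eta)$ force $\operatorname{dist}(I(\nu\omega\vert_{i}), I(\eta\omega\vert_{i})) \geq \operatorname{dist}(I(\nu), I(\eta))$, but this only shows that the condition, read literally as transcribed, is vacuous: it would hold with $c=1$ for every piecewise monotone map, every partition and every point, making hypothesis \eqref{eq:condition_2} of \Cref{thm_HY_Thm_2.1} empty and the transitivity assumption in \Cref{thm:main_2} redundant. That cannot be the intended content. The separation the Hu--Yu machinery actually needs --- and the one the paper's proof establishes --- is relative to the \emph{lengths} of the parent cylinders: in both cases of the paper's argument one obtains
\[
\operatorname{dist}\bigl(I(\nu\omega\vert_{i}), I(\eta\omega\vert_{i})\bigr) \;\geq\; \rho\,\beta^{-(m+1)}\,\lvert I(\eta)\rvert,
\]
where $m$ is a uniform mixing time supplied by transitivity. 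This is non-trivial precisely where your inequality says nothing: when $I(\nu)$ and $I(\eta)$ are adjacent, or nested because $\nu$ is a prefix of $\eta$, one has $\operatorname{dist}(I(\nu), I(\eta)) = 0$ and your lower bound degenerates to $0$, yet the two holes must still be shown to be separated by a definite fraction of the scale at which Player 1 is playing --- otherwise no ball of size proportional to $\lvert I(\nu)\rvert$ can be placed in the gap. Producing that separation is the substance of the proposition: one uses the Markov property together with transitivity to split $I(\nu\omega\vert_{l})$ and $I(\eta\omega\vert_{l})$ (for some $l \leq m$) into at least two subcylinders each of length at least $\rho\beta^{-(\lvert\eta\rvert+m+1)}$, one of which must lie in the gap between the two holes; the $\gamma/4$-comparability and the constant $M$ built from the combinatorics of $\omega$ are what control the prefix case. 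None of this appears in your argument, and the fact that you never invoked transitivity or the comparability hypothesis should have been a warning sign that the condition had been trivialised rather than verified.
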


\begin{proof}
\Cref{lem:geometric_lengths_of_cylinders} yields \eqref{eq:condition_1} of $H_{\xi, \gamma}$, thus is suffices to show that \eqref{eq:condition_2} of $H_{x, \gamma}$ is satisfied.  To this end, let $n-1$ denote the cardinality of $P_{\beta,\alpha}$, and observe that, since by assumption $T_{\beta, \alpha}$ is transitive, there exists an $m = m_{\beta, \alpha} \in \mathbb{N}$, so that $J(l) \subseteq T_{\beta,\alpha}^{m}(J(k))$, for all $l$ and $k \in \{0, 1, \ldots, n-1\}$, where $J(l)$ and $J(k)$ are as defined in the proof of \Cref{lem:geometric_lengths_of_cylinders}.  Further, if for two admissible words $\nu$ and $\eta$, we have that $I(\nu)$ and $I(\eta)$ are $\gamma/4$-comparable, then by \Cref{lem:geometric_lengths_of_cylinders} there exists $k_{0} \in \mathbb{N}$ with $\lvert \lvert \nu \rvert - \lvert \eta \rvert \rvert \leq k_{0}$.  Letting $\omega$ denote the symbolic representation of $x$ generated by $T_{\beta, \alpha}$ with respect to the partition $P_{\beta,\alpha}$, set
    \begin{align*}
        M = M_{\beta, \alpha} = \begin{cases}
        \max \{ \sigma^{k}(\omega) \wedge \omega \colon k \in \{1, 2, \ldots, k_{0} \}\} & \text{if} \; \omega \; \text{is not periodic},\\
        \max \{ \sigma^{k}(\omega) \wedge \omega \colon k \in \{1, 2, \ldots, \operatorname{per}(\omega) -1 \}\} & \text{if} \; \omega \; \text{is periodic.}
        \end{cases}
    \end{align*}
Our aim is to show \eqref{eq:condition_2} of $H_{x, \gamma}$ is satisfied for all admissible words $\nu$ and $\eta$ with $I(\nu)$ and $I(\eta)$ are $\gamma/4$-comparable and all integers $i > i^{*} = k_{0} + m + M$. For this, suppose $\nu$ and $\eta$ are $\omega\vert_{i}$-extendable with $0 \leq \lvert \lvert \nu \rvert - \lvert \eta \rvert \rvert \leq k_{0}$ and $\operatorname{dist}(I(\nu\omega\vert_{i}), I(\eta\omega\vert_{i})) > 0$.  We consider case when $\nu$ is not a prefix of $\eta$, and when $\nu$ is a prefix of $\eta$ separately.

For both of these cases we use the following facts. For $l \in \{ 0, 1, \ldots, n-1\}$ there exists a minimal $j \in \{ 1, 2, \ldots, m \}$ such that $T_{\beta, \alpha}^{j}(I(l))$ contains the interiors of at least two elements of $P_{\beta,\alpha}$.  For $k \in \{1, 2, \ldots, \lvert \nu \rvert + i -1\}$ and $l \in \{1, 2, \ldots, \lvert \eta \rvert + i -1\}$, we have $T_{\beta, \alpha}^{k}(I(\nu \omega\vert_{i}))$ and $T_{\beta, \alpha}^{l}(I(\eta \omega\vert_{i}))$ are intervals, and $T_{\beta, \alpha}^{k}(J(\nu \omega\vert_{i})) = J(\sigma^{k}(\nu \omega\vert_{i}))$ and $T_{\beta, \alpha}^{l}(J(\eta \omega\vert_{i})) = J(\sigma^{l}(\eta \omega\vert_{i}))$.

Let us consider the first of our two cases, namely when $\nu$ is not a prefix of $\eta$. Our above two facts imply that there exist $l \in \{1, 2, \ldots, m-1\}$ and $F \subseteq \{0, 1, \ldots, n-1\}$ with $\lvert F \rvert \geq 2$ such that
    \begin{align}\label{eq:HY-splitting_of_cylinders}
        I(\nu \omega\vert_{l}) = \bigcup_{k \in F} I(\nu \omega\vert_{l} k)
        \quad \text{and} \quad
        I(\eta \omega\vert_{l}) = \bigcup_{k \in F} I(\eta \omega\vert_{l} k).
    \end{align}
Since $\nu$ is not a prefix of $\eta$, there exists $j \in \{ 1, 2, \ldots, \min\{\lvert \nu \rvert, \lvert \eta \rvert \} - 1 \}$ such that $\nu\vert_{j} = \eta\vert_{j}$ and $\nu\vert_{j+1} \prec \eta\vert_{j+1}$, or $\nu\vert_{j} = \eta\vert_{j}$ and $\nu\vert_{j+1} \succ \eta\vert_{j+1}$.  Suppose that $\nu\vert_{j} = \eta\vert_{j}$ and $\nu\vert_{j+1} \prec \eta\vert_{j+1}$, and that $\omega_{1} = \max F$. Letting $k \in F \setminus \{\omega_{1}\}$, for all $x \in I(\nu \omega\vert_{i})$, $y \in I(\eta \omega\vert_{l} k)$ and $z \in I(\eta \omega\vert_{i})$, that $x \leq y \leq z$.  In other words, $\operatorname{dist}(I(\nu\omega\vert_{i}), I(\eta\omega\vert_{i})) \geq \lvert I(\eta \omega\vert_{l} k)\rvert$, and hence by \Cref{lem:geometric_lengths_of_cylinders},
    \begin{align*}
        \operatorname{dist}(I(\nu\omega\vert_{i}), I(\eta\omega\vert_{i}))
        \geq \lvert I(\eta \omega\vert_{l} k)\rvert
        \geq \rho \beta^{-(\lvert \eta \rvert + l + 1 )}
        \geq \rho \beta^{-(\lvert \eta \rvert + m + 1)}
        \geq \rho \beta^{-(m + 1)} \lvert I(\eta) \rvert.
    \end{align*}
Similarly, if $\omega_{1} \neq \max\{F\}$, setting $k = \max F$, we obtain that
    \begin{align*}
        \operatorname{dist}(I(\nu\omega\vert_{i}), I(\eta\omega\vert_{i}))
        \geq \lvert I(\nu \omega\vert_{l} k)\rvert
        \geq \rho \beta^{-(\lvert \nu \rvert + l + 1 )}
        \geq \rho \beta^{-(\lvert \nu \rvert + m + 1)}
        \geq \rho \beta^{-(m + 1)} \lvert I(\nu) \rvert.
    \end{align*}
An analogous argument yields the result when $\nu\vert_{j} = \eta\vert_{j}$ and $\nu\vert_{j+1} \succ \eta\vert_{j+1}$.

When $\nu$ is a prefix of $\eta$, the result follows using a similar reasoning as in the case  when $\nu$ is not a prefix of $\eta$, but where we replace the first line of the argument, namely \eqref{eq:HY-splitting_of_cylinders}, by the following observation. By construction, there exists a $j \in \{ 1, 2, \ldots, \lvert \eta \rvert - \lvert \nu \rvert + M -1 \}$ such that $\nu \omega\vert_{j} = (\eta \omega\vert_{i})\vert_{j + \lvert \nu \rvert}$ but $\nu \omega\vert_{j+1} \neq (\eta \omega\vert_{i})\vert_{j + \lvert \nu \rvert + 1}$.  In which case, by our two facts, there exists an $l \in \{0, 1, 2, \ldots, m-1\}$ and a subset of $F$ of $\{1, 2, \ldots, n-1\}$ with $\lvert F \rvert \geq 2$ such that
    \[
        I(\nu \omega\vert_{j+l}) = \bigcup_{k \in F} 
        I(\nu \omega\vert_{j+l} k)
        \quad \text{and} \quad
        I((\eta \omega\vert_{i})\vert_{j + \lvert \nu \rvert + l}) = \bigcup_{k \in F} I((\eta \omega\vert_{i})\vert_{j + \lvert \nu \rvert +l} k).
        \qedhere
    \]
\end{proof}

\begin{proof}[{Proof of \Cref{thm:main_2}}]
This is a direct consequence of \Cref{thm:G1990+Palmer79,thm_HY_Thm_2.1}, and \Cref{prop:alpha-winning_transport,prop:thm_SFT+Transitive_implies_winning}.
\end{proof}

\bibliographystyle{alpha}
\bibliography{alphawinning}

\end{document}